\documentclass[11pt]{amsart}

\usepackage[margin=1in]{geometry}
\usepackage{soul}
\usepackage{leftindex}
\usepackage[utf8]{inputenc}
\usepackage{csquotes}

\usepackage[utf8]{inputenc}
\usepackage{dsfont}
\usepackage{amsmath}
\usepackage{amsfonts}
\usepackage{amssymb}
\usepackage{amsthm}
\usepackage{xcolor}
\usepackage{graphicx}
\usepackage{hyperref}

\newtheorem{thm}{Theorem}[section]

\newtheorem{prop}[thm]{Proposition}

\newtheorem{lem}[thm]{Lemma}
\newtheorem{defn}[thm]{Definition}
\newtheorem*{question*}{Question}

\newtheorem{cor}[thm]{Corollary}
\newtheorem{claim}{Claim}[thm]
\newtheorem*{claim*}{Claim}

\newtheorem{Main}{Theorem}

\newtheorem{prop*}[Main]{Proposition}
  
\newtheorem{fact}{Fact}
\numberwithin{equation}{section}

\newcommand{\Ex}{\mathrm{Ex}}
\newcommand{\PSL}{\mathrm{PSL}}
\newcommand{\Z}{{\mathbb{Z}}}
\newcommand{\N}{\mathbb{N}}
\newcommand{\R}{\mathbb{R}}

\newcommand{\Joi}{\mathrm{Joi}}
\newcommand{\disth}{\mathrm{d}_H}
\newcommand{\dist}{\mathrm{d}_X}
\newcommand{\fin}{\mathrm{fin}}
\newcommand{\K}{\mathcal{K}}
\newcommand{\ud}{\bar d}
\newcommand{\eps}{\varepsilon}
\newcommand{\be}{\begin{equation}}
\newcommand{\ee}{\end{equation}}
\newcommand{\cP}{\mathcal{P}}

\usepackage[draft]{todonotes}

\usepackage[backend=biber, style=alphabetic]{biblatex}
\title[IC-rigidity for abelian actions]{IC-rigidity for abelian actions}
\author[A. Kanigowski, K. Kasprzak, and M. Lorenzo-Laguno]{Adam Kanigowski, Kosma Kasprzak, and Miriam Lorenzo-Laguno}
\begin{document}
\begin{abstract}
    Let $G$ be a group acting continuously on a compact metric space $X$. This naturally induces an action of $G$ on the space $\K(X)$ of nonempty compact subsets of $X$. In \cite{KraSchmieding2026}, Kra and Schmieding define a system $(X,G)$ to be \textit{IC-rigid} if the union of the supports of the $G$-invariant Borel probability measures on $\K(X)$ is as small as possible. They leave open the question of existence of IC-rigid abelian systems.
    
    We show the existence of IC-rigid actions for three classes of abelian groups: mixing rank-one $\Z$-actions, horocycle flows, and certain mixing actions on a Cantor set of $G=\bigoplus_{n=1}^{\infty}G_n$, where $(G_n)_n$ is any sequence of nontrivial finite abelian groups. In the course of proving these results, we introduce the notion of \textit{weakly asymptotic pairs} and establish its close connection to topological minimal self-joinings.
\end{abstract}
\maketitle
\section{Introduction}
\begingroup
\renewcommand{\thefootnote}{}
\footnotetext{No generative AI tools were used in producing the ideas, proofs, or text appearing in this manuscript.}
\endgroup
Consider a locally compact, second countable amenable group $G$ acting continuously on a compact metric space $X$, and let $\K(X)$ denote the space of nonempty compact subsets of $X$, endowed with the Hausdorff metric. The induced action of $G$ on $\K(X)$ has long been a topic of study as a tool to further understand the topological properties of the underlying system $(X,G)$ \cite{BauerSigmund1975,Glasner1979,GlasnerWeiss1995,Banks2005,LiOprochaYeZhang2017}. In \cite{KraSchmieding2026}, Kra and Schmieding introduce a new perspective on this setting by studying the distribution of the $G$-invariant probability measures on $\K(X)$. 

More precisely, Kra and Schmieding consider the space of Borel probability measures on $\K(X)$ invariant under the induced action of a discrete group $G$. They refer to these measures as \textit{invariant random compacts} (IRCs). Every such system admits at least one IRC, namely $\delta_X$. Moreover, when $G$ preserves a measure $\mu$ on $X$, one can construct further IRCs supported on finite subsets of $X$ by pushing forward the product measure $\mu^{\times k}$ under suitable maps from $X^{\times k}$ to $\K(X)$ (see Section \ref{sec:IC} for details). Kra and Schmieding denote by $\K_{\text{fin}}(X)\subset \K(X)$ the set of all finite sets of $X$, and call \textit{finitary} any IRC fully supported on $\K_{\text{fin}}(X)$. 

Motivated by the examples above, they define a system to be \textit{IC-rigid} if every IRC decomposes into a finitary component and a $\delta_X$ component. 
Equivalently, every IRC is supported on a subset of $\K_{\fin}(X)\cup \{X\}$, so the union of the supports of all IRCs is as small as possible.

Here, we answer one of the questions posed in \cite{KraSchmieding2026}: 
\begin{question*}
    Does there exist an IC-rigid action of an abelian group?
\end{question*} 
Our answer is positive. In fact, we provide three families of examples of IC-rigid abelian actions. 

First, we show existence of IC-rigid $\Z$-actions:
\begin{Main}\label{mainA}
    All mixing rank-one $\Z$-actions are IC-rigid.
\end{Main}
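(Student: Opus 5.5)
Throughout, $(X,T)$ is a mixing rank-one system in its topological (symbolic) model. Its unique invariant measure $\mu$ is fully supported and has full support on $X$. The plan is to show that an arbitrary IRC $\nu$ on $\K(X)$ gives zero mass to the set of infinite compact sets different from $X$.

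\textbf{Step 1: reduce to a statement about $\mu$.} Take $\nu$ ergodic and suppose it charges the infinite compacts $K\neq X$. Pushing $\nu$ forward by natural invariant maps produces invariant measures on $X^{k}$ which are joinings of $\mu$ with itself. Concretely, take $\nu$-almost every $K$, choose $k$ points of $K$ measurably (for instance by taking limits of $K$-maximal separated sets) and average. Since $\mu$ is the unique invariant measure, every invariant measure on $K\subset X$ projects to $\mu$ on each coordinate.

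\textbf{Step 2: mixing rank-one systems have minimal self-joinings of all orders.} This is King's theorem for mixing rank one, combined with Ryzhikov's result on higher orders. Every ergodic self-joining is therefore a product of off-diagonal measures. I will use this in its topological form: the only closed invariant subsets of $X^k$ carrying the relevant measures are finite unions of graphs of powers $T^{n}$ and products of them.

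The key point is that if $K$ is $\nu$-typical, infinite, and $K\neq X$, then the random set $K$ contains pairs of points $(x,y)$ with $y\notin\{T^nx\}$ that are ``weakly asymptotic''. These pairs generate a joining that is neither off-diagonal nor product-like on a set of positive measure. The mechanism is as follows. Because $K\neq X$, there is an open set $U$ avoided by $K$. By recurrence of $\nu$ in the Hausdorff metric, $T^{n}K$ is close to $K$ for many $n$. Hence the trajectories of distinct points of $K$ avoid $U$ simultaneously along those times, and this forces correlations between them.

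\textbf{Step 3: rule out these correlations using the rank-one structure.} Points in a rank-one model are concatenations of towers of height $h_n$ with spacers. Mixing implies that the spacer sequences are not eventually periodic in a strong sense. Two points whose joint orbit avoids a fixed cylinder set infinitely often in the return-time structure must then lie in the same orbit up to a bounded shift. This forces $K$ to be a finite union of orbit pieces, and hence, since $K$ is compact and invariant in distribution, either finite or dense, that is, $K=X$. I would formalize this through the notion of weakly asymptotic pairs. The statement to prove is that in mixing rank-one systems every weakly asymptotic pair is of the form $(x,T^n x)$, obtained from the combinatorics of the cutting-and-stacking construction.

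The main obstacle is the passage in Step 2 from a measure supported on large compacts to an honest non-trivial self-joining, that is, constructing from $\nu$ a joining that witnesses the failure of minimal self-joinings. I would first try it with $k=2$, using the measure on $X^2$ obtained by integrating $\delta_{(x,y)}$ over pairs chosen according to a measurable selection of points in $K$ at a prescribed distance. I would then verify that this measure is not a finite combination of off-diagonals.
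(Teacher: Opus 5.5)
Your proposal has genuine gaps. The strategy (minimal self-joinings plus the absence of weakly asymptotic pairs outside orbits) is essentially that of Theorems~\ref{mainC} and~\ref{mainD}, and it fails for rank-one $\Z$-actions at several concrete points.

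First, $\mu$ is not the unique invariant measure. As the paper notes, rank-one $\Z$-actions in this model have a fixed point (the spacer point $0^{\Z}$). So invariant measures on $X^{k}$ need not have $\mu$-marginals, and this is exactly why Theorem~\ref{mainA} cannot be deduced from Theorem~\ref{mainD}.

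Second, the claim in Step~3 is false. Take a limit of $\sigma^{p+1}x$ over positions $p$ where $x_p=1$ is followed by at least $L$ zeros, with $L\to\infty$. This gives $z\in X$ with $z_{-1}=1$ and $z_i=0$ for all $i\geq 0$. Then $(z,0^{\Z})$ is a forward asymptotic, hence weakly asymptotic, pair, and $z$ is not in the orbit of $0^{\Z}$.

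Third, measure-theoretic minimal self-joinings constrain only invariant measures, not orbit closures of individual tuples, so your ``topological form'' in Step~2 does not follow. Any version strong enough to control tuples of points of $K$ is a topological minimal self-joinings statement. No infinite $\Z$-system satisfies such a statement in all orders (Theorem~\ref{prop:king}). This is why, by Theorem~\ref{main:Z}, the hypotheses of the weakly-asymptotic-pair criterion are never met by an infinite $\Z$-system.

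Most importantly, the step you flag as the main obstacle is the whole proof, and your plan for it cannot work. A measurable but non-equivariant selection of points of $K$ does not push $\nu$ to an invariant measure. Even after averaging, a $2$-fold (or any fixed $k$-fold) joining obtained this way can be $\mu\times\mu$ or another trivial joining. Such a joining is fully compatible with the points avoiding a small ball $U$ at many times, since independent points all miss $U$ with probability $(1-\mu(U))^k>0$. So simultaneous avoidance of $U$ yields neither a non-trivial joining nor asymptoticity.

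Failure of IC-rigidity is visible only through an unbounded number of points. By Lemma~\ref{lemma:countdisc}, a non-degenerate trivial joining of $K_0=K_0(\varepsilon)$ points hits every cell of an $\varepsilon$-fine partition with probability $>1/2$. One must therefore extract, from an arbitrary compact $A$ as in Proposition~\ref{prop:IC} whose points need not be generic for anything, many points whose joint orbit segments are distributed like such a joining.

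The paper does this combinatorially in Theorem~\ref{thm:lem}, using neither minimal self-joinings nor asymptotic pairs:
\begin{itemize}
\item the word $y[h_n,2h_n]$ is determined by the occurrences of $\omega_n$ in $y$ starting in $[0,2h_n)$;
\item hence more than $M=2^{200K_0}$ such words force $K_0$ occurrences starting within $0.01h_n$ of one another;
\item these are long segments of the single generic point $x$ at distinct offsets;
\item mixing of all orders (Kalikow), the density bound of Proposition~\ref{prop:density}, and ergodicity of the limiting non-degenerate trivial joining show that the empirical measure of these segments over the window approximates that joining;
\item this yields at least $0.1h_n$ times in $[0,3h_n)$ at which every word of the fixed length appears.
\end{itemize}
The missing idea in your proposal is some use of the rank-one structure of this kind to replace genericity of arbitrary points.
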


We also consider actions of countable direct sums of finite groups introduced by Danilenko in \cite{Danilenko_2001} and studied further in \cite{Danilenko_2006}. They satisfy a natural extension of the rank-one condition to general locally compact second countable amenable groups, sometimes referred as \emph{funny} rank-one, introduced by J.P. Thouvenot (see \cite{Ferenczi1985}) and generalized in \cite{Sokhet1997} (see also \cite{delJuncoYassawi2000}). Under the additional assumption of abelianity, we prove that these actions are IC-rigid:
\begin{Main}\label{mainF}
Let $(G_n)$ be a sequence of nontrivial finite abelian groups, and let 
\[
G=\bigoplus_{n=1}^{\infty}G_n.
\]
The group $G$ admits $IC$-rigid actions.
\end{Main}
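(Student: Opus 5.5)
We will realize $G$ as acting on a Cantor set through a Danilenko-type $(C,F)$-construction. Put $F_n = G_1\oplus\cdots\oplus G_n$, so $F_{n+1}=F_n\oplus G_{n+1}$. Choose finite subsets $C_{n+1}\subset F_{n+1}$ such that the translates $F_n+c$, $c\in C_{n+1}$, are pairwise disjoint and contained in $F_{n+1}$. The sets $C_n$ will be chosen sparse and "spread out" inside the increasing Følner sequence $F_n$, and in addition $|C_{n+1}|\to\infty$. The space $X$ is the space of points of $\{0,1\}^G$ (or the induced compact model) built from the towers, and $G$ acts by shifts. With a mixing choice of spacers and cuts, exactly as in Danilenko's construction, we obtain a minimal, uniquely ergodic, mixing, funny rank-one action on a Cantor set.

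The proof of IC-rigidity then runs through the weakly asymptotic pairs machinery, in the same scheme as for Theorem \ref{mainA}. Let $\nu$ be an IRC on $\K(X)$. By ergodic decomposition we may assume $\nu$ is ergodic, and we must show that $\nu$-a.e.\ $K$ is either finite or equal to $X$.

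\begin{enumerate}
\item \textbf{Cardinality dichotomy.} The cardinality $|K|$ is $G$-invariant, hence $\nu$-a.s.\ constant. If it is finite we are done, so assume $|K|=\infty$ almost surely.

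\item \textbf{Pairs in the support.} Take $\nu$-typical infinite $K$, and pick pairs $(x,y)$ of distinct points of $K$ that are close. Pushing $\nu$ forward gives invariant measures (joinings) on $X\times X$ supported off the diagonal and near it.

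\item \textbf{Weakly asymptotic dichotomy.} Use the rank-one structure to show that, for the shift by $g$ with $g$ moving along the "cut" directions of the $C_n$, any pair of distinct points is weakly asymptotic to some off-diagonal pair $(x, hx)$ with $h\in G$. This is the analogue of the earlier statement that weakly asymptotic pairs force topological minimal self-joinings.

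\item \textbf{Consequence for $K$.} We deduce that $K$ is closed under a structure compatible with the orbit. Invariance of $\nu$ together with mixing then forces $K$ to accumulate in every cylinder of every level $F_n$, which gives $K=X$.
\end{enumerate}

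Concretely, for step 4 we show that the empirical visits of $K$ to the level-$n$ tower partition are uniform. Otherwise the IRC would project to a nontrivial invariant measure on $\K$ of a finite quotient, and this quotient is ruled out because mixing gives no finite factors.

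The main obstacle is step 3: establishing the topological minimal self-joining type property for $G$-actions with $G$ a non-finitely-generated torsion group. In the $\Z$ case one uses that the return times are single integers. Here the analogous rigidity must come from the product structure $F_{n+1}=F_n\oplus G_{n+1}$. We would first try to choose $C_{n+1}$ so that any two distinct elements $c,c'$ have differences $c-c'$ that are distinct across all pairs; this is a Sidon-type condition that is possible since $|G_{n+1}|$ can be amplified by grouping factors. This condition rules out off-diagonal invariant closed sets other than graphs of group translations. Abelianity is used to ensure those translations commute with the action, so that graph joinings correspond to finite sets in $K$.
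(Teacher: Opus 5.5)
There is a genuine gap: your steps 2--4 do not form an argument, and the inputs they would need are never obtained.

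In step 2 there is no equivariant measurable way to select pairs of points from a $\nu$-typical $K$. So ``pushing $\nu$ forward'' to joinings on $X\times X$ near but off the diagonal is undefined, and F{\o}lner averages over such pairs can just as well collapse onto the diagonal. Step 3 has the logic reversed: what is needed is the \emph{absence} of weakly asymptotic pairs, i.e.\ that two points in different $G$-orbits are generic for $\mu\times\mu$. In step 4 the level-$n$ tower partition is not $G$-invariant, so it gives no finite factor, and the fact that mixing systems have no finite factors says nothing about $K$.

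More fundamentally, the topological (TMSJ-type) property you single out as the main obstacle would not suffice even if proved. A dense orbit of $(x_1,\dots,x_N)$ in $X^{\times N}$ yields only \emph{some} $g$ with $gK$ $\eps$-dense. Via Proposition \ref{prop:IC}, IC-rigidity amounts to showing that the set $D_\eps$ of such $g$ has upper density bounded below uniformly in $\eps$. That requires measure-theoretic genericity of $N$-tuples for every $N$, hence minimal self-joinings of all orders, which you never invoke. You also never treat the case where $K$ contains infinitely many points of a single orbit. There $2$-fold mixing is not enough and one needs mixing of all orders, while your construction only claims mixing.

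The paper closes these gaps as follows.

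\textbf{The model.} It uses Danilenko's spacer-free model $X_n=F_n\times C_{n+1}\times C_{n+2}\times\cdots$ with $C_{n+1}=\{(s_n(h),h):h\in H_n\}$, so that $F_nC_{n+1}=F_{n+1}$. A $\{0,1\}^G$ model with sparse $C_n$ and spacers risks containing the all-zero fixed point, as rank-one $\Z$-subshifts do, which would destroy unique ergodicity. The spacer-free system is minimal and uniquely ergodic (Theorem \ref{CFuni}) and can be chosen mixing of all orders (Theorem \ref{CFmixing}). Since $G$ is abelian, it then has MSJ of all orders (Theorem \ref{CFMSJ}).

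\textbf{Pairs.} No Sidon condition is needed. Write $x=(f_n,d_{n+1},\dots)$, $x'=(f_n',d'_{n+1},\dots)$ and $t_n=f_n^{-1}f_n'$. Then the $F_n$-average of $\delta_{(gx,gx')}$ on $[B]_k^\infty\times[B']_k^\infty$ equals exactly $\mu([B]_k^\infty\cap t_n^{-1}[B']_k^\infty)$. Distinct elements of $C_{n+1}$ differ by an element outside $F_n$, so either $t_n$ eventually leaves every $F_N$ or $x'\in Gx$. Mixing then gives Proposition \ref{CFasy}, hence no weakly asymptotic pairs. Lemma \ref{lemma:WP}, using MSJ of all orders, upgrades this to: every $N$-tuple with coordinates in distinct orbits is generic for $\mu^{\times N}$.

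\textbf{The quantitative conclusion} (Theorem \ref{mainD}). Given an $\eps$-fine open cover, take $N,L$ from Lemmas \ref{lemma:countprod} and \ref{lemma:countoffd}. The set $A$ from Proposition \ref{prop:IC} is infinite by condition b). So it contains either $N$ points in distinct orbits, or $N$ points $g_1x,\dots,g_Nx$ with $g_i^{-1}g_j\notin F_L$. In both cases $\ud(D_\eps)>1/2$ for every $\eps$, contradicting condition a).
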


Additionally, we extend the notion of IC-rigidity in \cite{KraSchmieding2026} to non-discrete groups. For a general measure-preserving system $(X,G,\mu)$, we denote by $\K_G(X)\subset\K(X)$ the collection of finite unions of compact orbit segments under the action of the centralizer of $G$. For every $S\in\K_G(X)$, one can then construct an IRC by pushing forward the product measure $\mu^{\times k}$, for some $k\geq 1$, under a map $X^{\times k}\to\K(X)$ such that $S$ lies on the support of the resulting measure. In this setting, IC-rigidity is equivalent to the condition that every IRC is supported on $\K_G(X)\cup \{X\}$. 

We then find IC-rigid $\R$-actions:
\begin{Main}\label{mainB}
    Let $\Gamma$ be a maximal nonarithmetic cocompact lattice in $\PSL_2(\mathbb{R})$, let $\mu$ denote the Haar measure on $X=\PSL_2(\mathbb{R})/\Gamma$, and let $h_t$ denote the horocycle flow in $X$. Then $(X,h_t,\mu)$ is IC-rigid.
\end{Main}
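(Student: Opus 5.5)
The approach is to reduce IC-rigidity of the horocycle flow to a joining classification, in the spirit of the argument for Theorem~\ref{mainA}, using Ratner's rigidity theory for horocycle flows as the joining input.

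Let $\nu$ be an $h_t$-invariant Borel probability measure on $\K(X)$. We want to show that $\nu$-almost every $K$ lies in $\K_G(X)\cup\{X\}$. Since the centralizer of $h_t$ in this setting consists of the horocycle flow itself together with the (finite, by maximality) centralizer coming from the normalizer of $\Gamma$, the set $\K_G(X)$ is the set of finite unions of compact horocycle orbit segments.

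\textbf{Step 1: points of $K$ and joinings.} First I will pass to ergodic components, so we may assume $\nu$ is ergodic. For $k\ge 1$, consider the space
\[
\{(K,x_1,\dots,x_k): x_i\in K\}.
\]
Using a measurable selection, I will lift $\nu$ to an $h_t$-invariant measure on this space. To do this, average over $[0,T]$ the pushforward of $\nu$ under a Borel selection $K\mapsto (K,s_1(K),\dots,s_k(K))$, and take a weak-$*$ limit; the fibre constraint $x_i\in K$ is closed, so it survives the limit. Projecting to $X^k$ gives an $h_t$-invariant measure $\lambda$ on $X^k$ whose coordinates are supported on $K$.

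\textbf{Step 2: Ratner rigidity.} By Ratner's classification of invariant measures for unipotent actions on $(\PSL_2(\R)/\Gamma)^k$, combined with the arithmeticity dichotomy, the ergodic components of $\lambda$ are of three kinds:
\begin{itemize}
\item products of Haar measures;
\item measures on periodic orbits (none exist in the cocompact case);
\item Haar measures on "diagonal" submanifolds $\{(x,c_2 x,\dots)\}$ where $c_i$ lies in the commensurator of $\Gamma$.
\end{itemize}
Since $\Gamma$ is nonarithmetic and maximal, the commensurator equals $\Gamma$, so such components are graphs of translates $x\mapsto h_{s}x$ by centralizer elements. This is exactly Ratner's rigidity and minimal self-joinings up to the centralizer.

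The key dichotomy is the following. If $K$ is not a finite union of horocycle segments, then I will choose points so that the joining is a full product on some coordinates. More concretely, define
\[
F(K)=\sup_{x\in K}\operatorname{diam}\bigl(\{t: h_t x\in K\}\ \text{near }0\bigr).
\]
Then either $K$ contains "transverse" pairs, namely points $x,y\in K$ not on a common short horocycle segment, arbitrarily many of them, or $K$ is locally contained in finitely many horocycle arcs.

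\textbf{Step 3: the transverse case forces $K=X$.} In the transverse case, I take $k$ points chosen so that no two lie on a common horocycle locally. Then $\lambda$ has no diagonal components, so $\lambda$ is Haar$^{\otimes k}$. This implies that for $\nu$-a.e.\ $K$, the set $K$ is $\epsilon$-dense for any $\epsilon$, hence $K=X$ because $K$ is closed.

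To make this rigorous, I will compare measures of small balls. The $\lambda$-mass of a product of $\epsilon$-balls $B_1\times\dots\times B_k$ is positive for all choices, which forces $K\cap B_i\neq\emptyset$ with positive probability. Combining this with ergodicity and a $0$-$1$ law yields density almost surely.

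In the remaining case, $K$ is a union of finitely many horocycle arcs, and invariance under $h_t$ forces these arcs to be compact segments. Hence $K\in\K_G(X)$.

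\textbf{Main obstacle.} I expect the main difficulty to be Step 3. The issue is arranging the lift in Step 1 so that genuinely "non-horocyclically related" points of $K$ are selected in a measurable, invariant way. Mixtures of diagonal components, corresponding to pairs along the same orbit at varying distances, must be excluded unless $K$ locally lies in horocycle arcs.

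I would first try a weakly-asymptotic-pairs argument. Pairs whose joining is diagonal with shift $h_s$ are exactly pairs on the same horocycle at distance $s$. The set of such $s$ realized in $K$ then must be finite, or else $K$ contains a full orbit closure, which is $X$ by minimality of $h_t$, via upper semicontinuity and compactness.
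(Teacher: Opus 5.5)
There is a genuine gap, and it sits in the case your dichotomy misclassifies: compact sets that meet only finitely many horocycle orbits but along an unbounded set of times. An example is $K=\{x\}\cup\{h_{t_n}x:n\geq 1\}$ with $t_n\to\infty$ and $h_{t_n}x\to x$. This set is countable and compact, but it is not in $\K_G(X)$. In Step 3 you treat points that are not on a common \emph{short} horocycle segment as transverse and conclude $\lambda=\mu^{\times k}$. But $(x,h_tx)$ is generic for the off-diagonal joining $(\mathrm{Id}\times h_t)_*\mu$, so for such $K$ the measure $\lambda$ is a mixture of off-diagonal joinings, not Haar. Your proposed repair does not work. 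If infinitely many shifts $s$ occur in $K$, it does not follow that $K$ contains an orbit closure; the example above is countable. The claim that invariance forces the arcs to be compact segments is also unsupported, since it is $\nu$, not $K$, that is invariant. The missing ingredient is mixing of all orders (Proposition~\ref{prop:hf2}), which you never use. The goal is not to exclude the off-diagonal components but to show that they also detect $\eps$-density. Choose $h_{t_1}x,\dots,h_{t_N}x\in K$ with all $|t_i-t_j|$ large. Multiple mixing then gives the corresponding off-diagonal joining mass $>1/2$ on the finite union of rectangles ``the tuple meets every cell of an $\eps$-cover'' (Lemmas~\ref{lemma:countprod} and~\ref{lemma:countoffd}). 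Hence $\nu$ gives mass at least $1/2$ to the $\eps$-dense sets for every $\eps$, and so $\nu(\{X\})\geq 1/2$. This is case 1 in the proof of Theorem~\ref{thm:generic}.

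A second, smaller gap is in the genuinely transverse case. Ratner's classification of the ergodic components of $\lambda$ does not by itself give $\lambda=\mu^{\times k}$ when the selected points lie on pairwise distinct orbits. The reason is that $\lambda$ is a weak$^*$ limit of Ces\`aro averages. The closed invariant sets $\{x_j=h_sx_i\}$ can acquire mass in the limit even if no selected tuple lies on them, which is exactly what a weakly asymptotic pair would produce. What you need is Ratner's pointwise equidistribution theorem (Proposition~\ref{prop:gen}): every tuple is generic for an ergodic joining supported on its orbit closure. It follows that $(x,y)$ is generic for $(\mathrm{Id}\times h_s)_*\mu$ only if $y=h_sx$ (Proposition~\ref{prop:hf5}). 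With minimal self-joinings, a tuple whose coordinates lie on distinct orbits is then generic for $\mu^{\times k}$, and dominated convergence gives $\lambda=\mu^{\times k}$. You assert the pair statement in your last paragraph, but you neither derive it nor use genericity of every point. With these two inputs your lifting scheme would become a measure-level version of the paper's argument. The paper avoids measurable selection entirely: it uses Proposition~\ref{prop:IC} to pass to a single compact set $A$ whose orbit averages converge to the IRC, and then picks finitely many points of $A$ directly.
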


To prove the preceding two theorems, we introduce the notion of \textit{weakly asymptotic pairs} (see Section \ref{sec:WP}), and present a setting in which it is key to showing IC-rigidity. 

\begin{Main}\label{mainC}
    Let $X$ be a compact metric space and let $\mu$ be a probability measure on $X$ invariant under the action of a group $G$. Suppose $(X,G,\mu)$ is minimal, uniquely ergodic, mixing of all orders, has minimal self-joinings of all orders, and has no weakly asymptotic pairs. Moreover, assume that every point $(x_1,\cdots,x_N)\in X^{\times N}$ is generic for an $(X^{\times N},\{g^{\times N}\}_{g\in G})$-ergodic measure. Then $(X,G)$ is IC-rigid. 
\end{Main}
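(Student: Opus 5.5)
Let $\nu$ be an IRC on $\K(X)$. By ergodic decomposition we may take $\nu$ ergodic and must show that $\nu$ is either finitary (supported on $\K_G(X)$, which in the discrete case is $\K_{\fin}(X)$) or equals $\delta_X$. The plan is to lift $\nu$ to a joining. Choose a measurable selection: for $\nu$-a.e.\ $K$, pick points $x_1(K),\dots,x_N(K)\in K$ forming an $\eps$-dense subset, say via a countable dense family of Borel selectors (Kuratowski--Ryll-Nardzewski). Averaging the pushforwards of $\nu$ along the orbit (Følner averages in $G$) gives a $G$-invariant measure $\lambda_N$ on $X^{\times N}$ under the diagonal action. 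Its marginals are invariant measures on $X$, so by unique ergodicity each equals $\mu$. Hence $\lambda_N$ is an $N$-fold self-joining of $(X,G,\mu)$.

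By minimal self-joinings of all orders, the ergodic components of $\lambda_N$ are products of off-diagonal measures. Each coordinate block is either independent of the others or is the graph of an element of the centralizer. For simplicity, take the centralizer to be trivial modulo $G$ (the MSJ case). Coordinates are then either independent or satisfy $x_j=g x_i$ for a fixed $g$.

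The next step uses genericity and the absence of weakly asymptotic pairs to pass from the joining back to $K$ itself. Fix a $\nu$-generic $K$ and points $x,y\in K$. The pair $(x,y)$ is generic for an ergodic self-joining, which must be either $\mu\times\mu$ or an off-diagonal measure supported on the graph of some $g$. In the off-diagonal case, genericity forces $d(h x, h g^{-1}y)\to 0$ along a density-one set of $h$. That is exactly a weakly asymptotic pair unless $y=gx$, so no weakly asymptotic pairs gives $y\in Gx$. This makes the orbit relation on $K$ rigid.

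It remains to split into cases.
\begin{enumerate}
\item Suppose every pair in $K$ is generic for an off-diagonal measure. Then $K$ lies in a single $G$-orbit, indeed in a finite set, since $\nu$-typical sets have bounded structure by Poincaré recurrence and the invariance of $\nu$. So $\nu$ is finitary.
\item Suppose instead that some pair (or $N$-tuple) in $K$ is independent, i.e.\ generic for $\mu\times\mu$ (respectively $\mu^{\times N}$). Then mixing of all orders combined with genericity shows that the translates $hK$ for $h$ along a typical sequence become $\eps$-dense in $X$ for larger and larger $N$. By $\nu$-invariance and recurrence, $\nu$-a.e.\ $K$ then contains $\eps$-dense sets for all $\eps$. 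Hence $K=X$ a.s., i.e.\ $\nu=\delta_X$.
\end{enumerate}
Mixed cases, where $K$ has finitely many orbit classes together with independent points, are reduced to the second case by the same argument applied to a representative of each class.

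The main obstacle is the second case. Independence of finitely many points in a typical $K$ has to be upgraded to the conclusion that $K$ is the whole space. The delicate part is controlling the selection, since $K$ may be uncountable, and getting uniformity across all tuples. I would first prove the following: if $\nu(\{K: |K|\ge N\})=1$ for all $N$, then for every $N$ the joining $\lambda_N$ is $\mu^{\times N}$ on the relevant coordinates. Full support of $\mu$ (from minimality) and closedness of $K$ then give $K\supseteq\operatorname{supp}\mu=X$.
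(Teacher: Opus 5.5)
There is a genuine gap, and it sits in the case analysis. Your overall frame is sound: reduce to an ergodic IRC $\nu$, take a $\nu$-generic $K$, and show that density of the translates $hK$ forces $\nu(\{X\})>0$. Your pairwise step is also correct: an off-diagonal pair must be an honest orbit pair, because there are no weakly asymptotic pairs.

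The problem is that you split on the wrong dichotomy. Your second case (``some pair in $K$ is independent'') cannot force $\nu=\delta_X$. The finitary IRC $\phi_*\mu^{\times 2}$, with $\phi(x,y)=\{x,y\}$, is supported on two-point sets whose points are generic for $\mu\times\mu$. Your ``mixed case'' reduction would likewise send every finite $K$ meeting two orbits into that case. Independence only helps when you have $N=N(\eps)$ such points, with $N\to\infty$ as $\eps\to 0$ (Lemma \ref{lemma:countprod}). You then need the whole $N$-tuple, not just each pair, to be generic for $\mu^{\times N}$. This holds because the tuple is generic for an ergodic $N$-fold self-joining, which is trivial by $N$-fold MSJ, and a trivial joining with no off-diagonal two-dimensional marginal is the product. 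The correct split is $K\in\K_G(X)$ versus $K\notin\K_G(X)$. In the latter case, either $K$ meets infinitely many classes of the relation $y\in Z(G)x$, or it meets a single class $Z(G)x$ in an unbounded set of times $\{z\in Z(G): zx\in K\}$.

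More seriously, the single-orbit case is not handled at all. Your claim that such a $K$ is ``indeed in a finite set \dots\ by Poincar\'e recurrence'' has no argument behind it. For non-discrete $G$ it is false as stated: a compact horocycle segment is infinite, lies in one orbit, and supports a legitimate IRC in $\K_G(X)$. What has to be ruled out is an unbounded set of central times. The tool for that is mixing of all orders, which your proposal invokes only in the independent case, where it is not needed.

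Here is the missing step. If $\{z\in Z(G): zx\in K\}$ is unbounded, pick $g_1,\dots,g_N$ in it with $g_i^{-1}g_j\notin F_L$. The $g_i$ are central and $x$ is generic for $\mu$, so $(g_1x,\dots,g_Nx)$ is generic for the properly off-diagonal joining $(g_1\times\cdots\times g_N)_*\Delta_N$, where $\Delta_N$ is the $N$-fold diagonal joining. By Lemma \ref{lemma:countoffd}, that joining gives mass $>1/2$ to tuples meeting every set of an $\eps$-cover. So again $hK$ is $\eps$-dense for a set of $h$ of upper density $>1/2$. Once both cases produce this, genericity of $K$ for the ergodic $\nu$ and the Portmanteau theorem give $\nu(\{X\})\ge 1/2$, hence $\nu=\delta_X$. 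This is exactly what the paper packages in Proposition \ref{prop:IC}.

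Finally, the measurable selection and the joinings $\lambda_N$ play no role and should be dropped. Your closing claim, that $\lambda_N$ is a product whenever $|K|\ge N$ almost surely, fails precisely in the single-orbit case. And ``closedness of $K$ gives $K\supseteq X$'' confuses density of the translates $hK$ with density of $K$ itself.
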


Under the additional assumption that $G$ is countable and abelian, the conditions of Theorem \ref{mainC} can be presented in a simpler equivalent form, resulting in the following theorem.

\begin{Main}\label{mainD}
    Let $X$ be a compact metric space and let $\mu$ be a probability measure on $X$ invariant under the action of a countable abelian group $G$. Suppose $(X,G,\mu)$ is minimal, uniquely ergodic, mixing of all orders, has minimal self-joinings of all orders, and has no weakly asymptotic pairs. Then $(X,G)$ is IC-rigid.
\end{Main}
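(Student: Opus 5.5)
Theorem \ref{mainD} follows from Theorem \ref{mainC}: it suffices to show that, for $G$ countable abelian and under the other hypotheses, every point $(x_1,\dots,x_N)\in X^{\times N}$ is generic for an ergodic measure of the diagonal action $\{g^{\times N}\}_{g\in G}$. Genericity is taken along a fixed F{\o}lner sequence $(F_n)$ of $G$, which exists since $G$ is countable abelian, hence amenable. The plan is to identify every limit point of the empirical measures $\frac{1}{|F_n|}\sum_{g\in F_n}\delta_{(gx_1,\dots,gx_N)}$. We then show that, up to grouping coordinates, all such limits are one and the same ergodic measure.

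First, fix $(x_1,\dots,x_N)$ and let $\lambda$ be any weak-$*$ limit of the empirical measures along a subsequence. By amenability $\lambda$ is invariant under the diagonal action. By unique ergodicity of $(X,G,\mu)$, each coordinate marginal of $\lambda$ equals $\mu$, so $\lambda$ is an $N$-fold self-joining of $\mu$. Minimal self-joinings of all orders then say that the ergodic components of $\lambda$ are products of off-diagonal measures. Each such component has the form $\mu_{P,(h)}$, determined by a partition $P$ of $\{1,\dots,N\}$ and elements $h_{ij}$ in the centralizer of $G$, which is $G$ itself when $G$ is abelian and the centralizer is trivial enough. The coordinates in one block satisfy $y_j=h_{ij}y_i$, and the blocks are mutually independent.

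Second, use the absence of weakly asymptotic pairs to pin down $P$ and the $h_{ij}$ from the point itself. For $i\neq j$, either $x_j=hx_i$ for some $h\in G$, or no such $h$ exists. In the first case every limit $\lambda$ is supported on $\{y_j=hy_i\}$, since this relation is preserved along orbits because $G$ is abelian. In the second case we want to show that $\lambda$ gives zero mass to every graph $\{y_j=h'y_i\}$. If positive mass were given to such a graph, the pair $(x_i,h'^{-1}x_j)$ would spend a positive upper density of time near the diagonal. This is exactly the kind of behaviour that should make it a weakly asymptotic pair, contradicting the hypothesis. Here I expect the definition in Section \ref{sec:WP} to be tailored to exactly this: pairs whose orbit closure in $X\times X$ carries a joining charging the diagonal. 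The countability of $G$ is used here to handle the countably many graphs simultaneously, since a measure charging a union of countably many graphs must charge one of them.

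Consequently the partition $P$ is the orbit-equivalence partition of the $x_i$ and the $h_{ij}$ are determined by the point. Every limit $\lambda$ is therefore supported on the closed set defined by these relations. Its ergodic components are then forced to be the single measure $\mu_{P,(h)}$, because no finer identification of independent blocks can occur. So all limits coincide with one ergodic measure, and the point is generic for it. Mixing of all orders guarantees that $\mu_{P,(h)}$ is ergodic, and Theorem \ref{mainC} then gives IC-rigidity.

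The main obstacle is the second step: translating ``$\lambda$ charges the graph of $h'$'' into the existence of a weakly asymptotic pair, and excluding a mixture of different joinings across components. I would try first a Lebesgue-density or Ces\`aro argument. Apply it to $\lambda(\{d(y_i,h'^{-1}y_j)<\varepsilon\})\to\lambda(\text{graph})>0$ as $\varepsilon\to0$, and check that this lower bound on visit frequency matches the definition of weak asymptoticity.
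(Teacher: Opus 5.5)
Your proposal is correct and follows essentially the paper's route. You reduce to Theorem \ref{mainC} by proving genericity, observe that every limit of the empirical measures is a joining whose ergodic components are trivial by unique ergodicity and MSJ, use countability to get positive mass on some off-diagonal graph between coordinates in different orbits, and convert this into a weakly asymptotic pair via Portmanteau and a diagonal choice of a single set $D$ along the convergent subsequence, exactly as in Lemma \ref{lemma:WP}. The only difference is organizational: you handle same-orbit coordinates directly through the closed support constraints $y_j=hy_i$ on the limit, whereas the paper first proves genericity for $\mu^{\times r}$ on orbit representatives and then pushes forward by the equivariant map $\phi$ in Lemma \ref{lemma:genWP}.
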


To prove Theorems \ref{mainB} and \ref{mainF}, we show that the systems satisfy the hypotheses of Theorems \ref{mainC} and \ref{mainD}, respectively. Interestingly, in the discrete abelian case, this method requires the group $G$ to not be finitely generated:
\begin{Main} \label{main:Z}
    For finitely generated abelian groups $G$, there are no infinite systems satisfying the conditions of Theorem \ref{mainD}.
\end{Main}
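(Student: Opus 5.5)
We need to show that no infinite system with $G$ finitely generated abelian can satisfy all the hypotheses of Theorem \ref{mainD}. The natural conflict is between minimal self-joinings of all orders and the absence of weakly asymptotic pairs. For a finitely generated abelian group we can produce asymptotic-type pairs from positive entropy or expansivity considerations. Alternatively, we can use non-free behaviour of the centralizer, which for abelian $G$ contains $G$ itself.

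\textbf{Step 1: reduce to $\Z^d$ acting freely.} Write $G\cong \Z^d\oplus F$ with $F$ finite. Since the system is mixing of all orders and infinite, every nontrivial element acts mixingly, so no element has finite order in the action. Hence the torsion part $F$ acts trivially. But a trivially acting nontrivial element cannot be mixing on an infinite nonatomic space, so $F$ is trivial. The action is then a free, mixing action of $\Z^d$.

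\textbf{Step 2: minimal self-joinings forces a single direction to be small.} For $d\ge 2$, pick $g,h$ generating a copy of $\Z^2$. Because $G$ is abelian, the graph joining $\Delta_h=\{(x,hx)\}$ is a $G$-invariant off-diagonal self-joining. That is allowed, since minimal self-joinings permit the centralizer. The real tool is instead the standard fact that a $\Z^d$ system with MSJ has centralizer equal to $G$. So I expect the main argument to be about weak asymptoticity in the $\Z$ (or rank-one direction) case.

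\textbf{Step 3: construct a weakly asymptotic pair.} Here I would invoke the definition from Section \ref{sec:WP}, which I expect reads roughly: distinct $x\neq y$, not in the same $G$-orbit, with $d(gx,gy)\to0$ along a set of $g$ of full density, or along a Følner-large set. For a finitely generated $G$, fix a finite symmetric generating set $S$ and apply a compactness/pigeonhole argument on balls $B_n$.
\begin{itemize}
\item If the system has positive entropy, there exist distinct asymptotic pairs along a half-space. This is Blanchard--Host--Ruette for $\Z$ and its $\Z^d$ analogue. Such pairs are weakly asymptotic once the half-space is suitably transported, or else they contradict MSJ because positive entropy admits nontrivial factors and joinings.
\item If the entropy is zero, I would use that finitely generated groups have polynomial growth in our case. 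The separation function $n\mapsto$ (maximal $\eps$-separated set of orbit segments on $B_n$) is then subexponential, and pigeonhole gives points $x_n\ne y_n$ that are $\eps$-close on most of $B_n$ yet separated at the identity. Passing to a limit along the centralizer gives a pair that is close on a density-one set, i.e.\ weakly asymptotic. This is where finite generation is used: the balls form a Følner sequence with controlled boundary. In contrast, for $\bigoplus G_n$ the subgroups are finite and the separation can sit in new coordinates.
\end{itemize}

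\textbf{Main obstacle.} The hard part is the zero-entropy limit argument. Closeness on a density-one subset of $B_n$ need not survive the limit, and the limit pair might coincide. I would first try to control this with the "no weakly asymptotic pairs" hypothesis itself. Uniform continuity should make the defect sets uniformly small. Unique ergodicity then transfers density estimates to the limit, with the limit pair forced to be distinct by separation at the identity. If that fails, I would fall back on showing that MSJ plus mixing for $\Z^d$ yields a rank-one-type structure whose towers produce the weakly asymptotic pairs explicitly.
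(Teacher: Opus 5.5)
\textbf{There is a genuine gap.} All of the work has to happen in the zero-entropy case of your Step 3, and there you give a hope rather than an argument. Moreover, the argument cannot be completed as designed.

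Nothing in your pigeonhole/compactness mechanism uses finite generation. Zero entropy and F\o lner sets with negligible boundary are equally available for $G=\bigoplus_n G_n$; there the F\o lner sets $F_n$ are subgroups, hence exactly invariant, which is better than word-metric balls. Yet the $(C,F)$-systems of Section \ref{sec:dani} have zero entropy, are minimal, uniquely ergodic and mixing of all orders, have MSJ of all orders, and have no weakly asymptotic pairs by Proposition \ref{CFasy}. So any route of the form ``zero entropy plus F\o lner control produces a weakly asymptotic pair'' is refuted by Theorem \ref{mainF}.

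The concrete step also breaks where you suspect. To force two points that are $\delta$-separated at the identity into a single mean-metric $\eps$-ball over $B_n$, you need more such points than balls. For a weakly mixing system the number of such balls is unbounded in $n$, since bounded mean complexity characterizes discrete spectrum. So $\delta$ must shrink with $n$, and the limit pair can collapse onto the diagonal. Re-centering to preserve density loses the separation at the identity anyway.

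Step 2 contains no argument. Step 1 is incorrect: mixing of a $G$-action says nothing about torsion elements, which never tend to infinity in $G$. For instance, $T_0\times T_0$ together with the coordinate flip is an action of $\Z\oplus\Z/2\Z$ that is mixing of all orders whenever $T_0$ is, and its torsion acts nontrivially.

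\textbf{The missing idea} is to use the absence of weakly asymptotic pairs as an input, not as something to contradict by constructing such a pair. By Lemma \ref{lemma:WP}, unique ergodicity, MSJ of all orders and no weakly asymptotic pairs imply that every tuple $(x_1,\dots,x_N)$ whose coordinates lie in distinct orbits is generic for $\mu^{\times N}$. Minimality and unique ergodicity give $\mu$ full support, so every such tuple has dense orbit in $X^{\times N}$. Hence the system has topological minimal self-joinings of all orders (Corollary \ref{cor:topmsj}).

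Finite generation enters only through the Bitar--Donoso--Petite theorem (Theorem \ref{prop:king}, extending King's result for $\Z$): no infinite action of a finitely generated abelian group of rank $d$ has $(2d+2)$-fold TMSJ. This topological obstruction, which genuinely fails for $\bigoplus_n G_n$, is what your proposal lacks.
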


To prove Theorem \ref{main:Z}, we first show that any system satisfying the hypotheses of Theorems \ref{mainC} or \ref{mainD} has topological minimal self-joinings of all orders.
We then use a result of Bitar, Donoso, and Petite \cite{BitarDonosoPetite2026}, which states that no infinite action of a finitely generated abelian group has topological minimal self-joinings of all orders. This generalizes a result by King \cite{King_1990} for $\Z$-actions.

Note that, in particular, the systems of Theorems \ref{mainF} and \ref{mainB} provide examples of abelian actions with topological minimal self-joinings of all orders.

Theorem \ref{mainA} cannot follow directly from Theorem \ref{mainD} in light of Theorem \ref{main:Z}. Indeed, rank-one $\mathbb{Z}$-actions have a fixed point, so they are never uniquely ergodic. Our proof of Theorem \ref{mainA}, although specialized to the specific setting of rank-one systems, is still quite similar in spirit to the proof of Theorem \ref{mainD}. It relies heavily on mixing of all orders of mixing rank-one systems, shown by Kalikow in \cite{Kalikow1984}, and ideas from King's proof of the minimal self-joinings property for this class in \cite{King1988}.

\subsection*{Plan of the paper}

We begin by establishing our setting in Section \ref{sec:prelim}. In Subsections \ref{sec:em} and \ref{sec:joi}, we recall several classical definitions and results concerning ergodicity, mixing, and joinings. In Subsection \ref{sec:IC}, we introduce the notion of IC-rigidity, adapting the definition to apply to both discrete and non-discrete groups. We also prove Proposition \ref{prop:IC}, which provides a characterization of IC-rigidity.

In Section \ref{sec:combi}, before turning to the proofs of the main theorems, we prove three combinatorial results: Lemmas \ref{lemma:countprod}, \ref{lemma:countoffd}, and \ref{lemma:countdisc}, that simplify the arguments throughout the remainder of the paper.

We then proceed with the proofs of the main theorems. In Section \ref{sec:rk1}, we introduce mixing rank-one systems and prove Theorem \ref{mainA}. In Section \ref{sec:WP}, we define weakly asymptotic pairs and prove Theorem \ref{mainC}. In section \ref{sec:WP2} we consider countable abelian groups $G$, and prove Theorems \ref{mainD} and \ref{main:Z}. In Section \ref{sec:hf}, we draw on classical results about the horocycle flow to verify the hypotheses of Theorem \ref{mainD}, hence proving Theorem \ref{mainC}. Finally, in Section \ref{sec:dani}, we recall the actions constructed by Danilenko and prove that they satisfy the assumptions of Theorem \ref{mainD}, confirming Theorem \ref{mainF}.

\section{Preliminaries}\label{sec:prelim}

\subsection{Ergodicity and mixing} \label{sec:em}
Let $G$ be a locally compact second countable amenable group acting continuously on a compact metric space $X$. We recall some classical definitions.

\begin{defn}[Ergodic measure]Let $\mu$ be a measure on $X$ invariant under the action of $G$. We say $\mu$ is \textit{ergodic} if every $G$-invariant Borel set $A\subset X$ satisfies $\mu(A)\in\{0,1\}$.
\end{defn}

\begin{defn}[Minimality] The action of $G$ on $X$ is \textit{minimal} if every orbit is dense in $X$.
\end{defn}

\begin{defn}[Unique ergodicity] The action of $G$ on $X$ is \textit{uniquely ergodic} if there exists a unique $G$-invariant Borel probability measure on $X$.
\end{defn}

\begin{defn}[Mixing] Let $\mu$ be a $G$-invariant probability measure on $X$. The system $(X,G,\mu)$ is \textit{mixing} if for every pair $A,B\subset X$ of Borel sets, \[\lim_{g\to\infty}\mu(A\cap gB)=\mu(A)\mu(B).\]
\end{defn}

\begin{defn}[Mixing of all orders] Let $(X,G,\mu)$ be an measure-preserving system. It is \textit{mixing of all orders} if for any $N\geq 2$ and every tuple $A_0,\ldots,A_{N}\subset X$ of Borel sets, \[\lim_{g_i^{-1}g_j\to\infty}\mu(g_0A_0\cap\cdots\cap g_nA_N)=\mu(A_0)\cdots\mu(A_N).\]
\end{defn}

We will use the following three classical facts:
\begin{fact}[Ergodic decomposition theorem]
    Every $G$-invariant Borel probability measure on $X$ is the barycenter of a probability measure supported on the set of ergodic $G$-invariant measures.
\end{fact}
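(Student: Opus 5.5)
The statement is the ergodic decomposition theorem for an amenable locally compact second countable group $G$ acting continuously on a compact metric space $X$. I would prove it with Choquet theory applied to the simplex of invariant measures. Let $M_G(X)$ be the set of $G$-invariant Borel probability measures on $X$, with the weak-$*$ topology. Since $X$ is compact metric, the space $M(X)$ of all Borel probability measures is compact and metrizable. Each $g\in G$ acts continuously on $M(X)$, so $M_G(X)=\bigcap_{g}\{\nu:g_*\nu=\nu\}$ is closed, hence compact, and it is clearly convex. It is also nonempty: averaging $\delta_x$ over a Følner sequence and taking a weak-$*$ limit point gives an invariant measure, and this is where amenability enters.

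The first step is to identify the extreme points of $M_G(X)$ with the ergodic measures. Suppose $\mu$ is invariant but not ergodic, so there is an invariant Borel set $A$ with $0<\mu(A)<1$. Then $\mu=\mu(A)\mu_A+\mu(A^c)\mu_{A^c}$, where $\mu_A$ and $\mu_{A^c}$ are the normalized restrictions. Both are invariant and distinct, so $\mu$ is not extreme.

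Conversely, suppose $\mu$ is ergodic and $\mu=t\nu_1+(1-t)\nu_2$ with $0<t<1$. Then $\nu_1\le t^{-1}\mu$, so $\nu_1\ll\mu$, and its density $f=d\nu_1/d\mu$ is $G$-invariant $\mu$-a.e. For countable $G$, each level set $\{f>c\}$ is invariant modulo null sets, and one modifies it to a genuinely invariant set $\bigcap_g g\{f>c\}$. For general lcsc $G$, I would pass to a countable dense subgroup and use continuity of the action on $L^1(\mu)$. Ergodicity then forces $f$ to be constant, so $\nu_1=\mu$. This null-set adjustment for uncountable $G$ is the one slightly delicate point of the argument.

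The second step is Choquet's theorem: for a compact convex metrizable subset of a locally convex space, here $M_G(X)\subset C(X)^*$, every point is the barycenter of a probability measure concentrated on the extreme points. This uses that the set of extreme points is $G_\delta$ when the set is metrizable. Applying this to $M_G(X)$ together with the first step proves the statement. I expect the main obstacle to be only the measurability and invariance-mod-null-sets issue in the converse direction of the first step. An alternative would be to disintegrate $\mu$ over the $\sigma$-algebra of invariant sets, but the Choquet route avoids having to prove that almost every fibre measure is ergodic.
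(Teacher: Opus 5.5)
The paper gives no proof of this statement. It quotes it as a classical fact and uses it as a black box, in Proposition~\ref{prop:IC} and Lemma~\ref{lemma:WP}. Your Choquet argument is a correct standard proof. $M_G(X)$ is a compact convex metrizable subset of $C(X)^*$ with the weak-$*$ topology. Choquet's theorem gives a representing measure $\rho$ with $\rho(\mathrm{ext}\,M_G(X))=1$. Your normalized-restriction argument shows that every extreme point is ergodic.

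That inclusion is all the statement requires: $\rho$ is then concentrated on the $G_\delta$ set $\mathrm{ext}\,M_G(X)$, which lies inside the set of ergodic measures. The converse (ergodic implies extreme), where you place the main difficulty, is not needed. This matters, because your sketch of it for uncountable $G$ does not work as written:
\begin{itemize}
\item Passing to a countable dense subgroup $H$ gives a strictly $H$-invariant set $E=\bigcap_{h\in H}h\{f>c\}$.
\item Continuity of the Koopman action only yields $\mu(gE\triangle E)=0$ for all $g\in G$, which you already knew from $f$ itself.
\item This never produces a strictly $G$-invariant set, and ergodicity in the paper's sense only constrains strictly invariant sets.
\end{itemize}
If you want that direction anyway (e.g.\ to know that the set of ergodic measures is Borel), the missing step is a Fubini argument. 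Let $\lambda$ be a probability measure equivalent to Haar measure, and set $E^*=\{x: gx\in E \text{ for } \lambda\text{-a.e. } g\}$. Then $E^*$ is strictly $G$-invariant, because right translates of Haar-null sets are Haar-null. Moreover $\mu(E\triangle E^*)=0$, because Fubini together with $\int_G\mu(g^{-1}E\triangle E)\,d\lambda(g)=0$ gives, for $\mu$-a.e.\ $x$, $1_E(gx)=1_E(x)$ for $\lambda$-a.e.\ $g$.

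Compared with disintegrating over the invariant $\sigma$-algebra, your route is softer but gives only the barycentric form, not a measurable assignment $x\mapsto\mu_x$. That barycentric form is exactly what the paper's two applications use.
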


\begin{fact}[Birkhoff's pointwise ergodic theorem]
    Every ergodic probability measure $\mu$ has a generic point. That is, letting $(F_n)$ be a tempered Følner sequence in $G$ and $m$ be the Haar measure on $G$, there exists $x\in X$ such that for any continuity set $A\subset X$
    \[
    \lim_{n\to\infty}\frac{1}{m(F_n)}\int_{F_n}\delta_A(g x)\,dm(g)=\mu(A).
    \]
\end{fact}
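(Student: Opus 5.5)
This is the classical Birkhoff pointwise ergodic theorem, in the Lindenstrauss form for amenable groups, stated as a ``fact''. The plan is to reduce it to Lindenstrauss' pointwise theorem along tempered Følner sequences and then pick one point that works for all continuity sets at once.

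First, I would invoke Lindenstrauss' pointwise ergodic theorem. For a locally compact second countable amenable group $G$ with tempered Følner sequence $(F_n)$, an ergodic $\mu$ and any $f\in L^1(\mu)$, for $\mu$-a.e.\ $x$ we have
\[
\frac{1}{m(F_n)}\int_{F_n} f(gx)\,dm(g)\to\int f\,d\mu .
\]
Since $X$ is compact metric, $C(X)$ is separable. Fix a countable dense set $\{f_k\}\subset C(X)$ and intersect the countably many full-measure sets. This gives a single point $x$, in fact a full-measure set of them, for which the averages converge for every $f_k$. An $\varepsilon/3$ argument, using that the averaging operators have norm at most $1$ in sup-norm, extends the convergence to all $f\in C(X)$. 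Hence the empirical measures
\[
\nu_n=\frac{1}{m(F_n)}\int_{F_n}\delta_{gx}\,dm(g)
\]
converge weak-$*$ to $\mu$.

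Second, I would pass from continuous functions to continuity sets via the portmanteau theorem. If $\nu_n\to\mu$ weak-$*$ and $\mu(\partial A)=0$, then $\nu_n(A)\to\mu(A)$. Concretely, approximate $\mathbf 1_A$ from above by continuous functions decreasing to $\mathbf 1_{\bar A}$ and from below by continuous functions increasing to $\mathbf 1_{A^\circ}$. This gives
\[
\mu(A^\circ)\le\liminf\nu_n(A)\le\limsup\nu_n(A)\le\mu(\bar A),
\]
and both bounds equal $\mu(A)$. The averaged quantity in the statement is exactly $\nu_n(A)$, with $\delta_A$ read as the indicator $\mathbf 1_A$.

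The only nontrivial input is Lindenstrauss' theorem itself, which needs temperedness and a Vitali-type covering argument. I would cite it rather than reprove it. The remaining care points are the measurability of $g\mapsto f(gx)$, which is automatic since the action is continuous, and the choice of one point good for all continuity sets simultaneously, which the countable dense subset of $C(X)$ handles.
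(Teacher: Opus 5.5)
Your argument is correct. The paper gives no proof of this Fact and simply quotes it as classical. Your reduction is exactly the standard justification the paper implicitly relies on: Lindenstrauss' pointwise theorem along tempered Følner sequences, then a countable dense subset of $C(X)$ to get a single point that works for all continuous functions, then the portmanteau theorem to pass to $\mu$-continuity sets.
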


\begin{fact}[Positive measure for open sets]
    If $(X,G,\mu)$ is minimal and uniquely ergodic, every open set $A\subset X$ has $\mu(A)>0$.
\end{fact}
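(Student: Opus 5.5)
The approach is to show that the topological support of $\mu$ is all of $X$; positivity on nonempty open sets then follows at once. (The empty set is excluded, since it trivially has measure zero.) Only minimality and the $G$-invariance of $\mu$ will be used. Unique ergodicity serves only to pin down which measure $\mu$ is: the unique invariant one.

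\emph{Definition of the support.} Let
\[
\operatorname{supp}\mu=\{x\in X:\ \mu(U)>0 \text{ for every open } U\ni x\}.
\]
Its complement $W$ is the union of all open sets of $\mu$-measure zero. Because $X$ is a compact metric space, it is second countable. Hence $W$ is already a countable union of basic open null sets, so $\mu(W)=0$. It follows that $\operatorname{supp}\mu$ is closed and has full measure. In particular it is nonempty, since $\mu(X)=1$.

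\emph{Invariance of the support.} Fix $g\in G$. The map $x\mapsto gx$ is a homeomorphism of $X$ with continuous inverse $g^{-1}$. For an open $U$ we have $\mu(gU)=\mu(U)$ by invariance. So $U$ is an open null neighbourhood of $x$ exactly when $gU$ is an open null neighbourhood of $gx$. Therefore $g(\operatorname{supp}\mu)=\operatorname{supp}\mu$, and the support is a nonempty, closed, $G$-invariant subset of $X$.

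\emph{Conclusion via minimality.} Pick any $x\in\operatorname{supp}\mu$. Its orbit $Gx$ lies in $\operatorname{supp}\mu$, and since the support is closed, so does $\overline{Gx}$. By minimality $\overline{Gx}=X$, hence $\operatorname{supp}\mu=X$. Now let $A\subset X$ be a nonempty open set and pick $y\in A$. Since $y\in\operatorname{supp}\mu$ and $A$ is an open neighbourhood of $y$, we get $\mu(A)>0$.

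There is no real obstacle here. The only step needing care is the countable-union argument showing the support has full measure, which uses second countability of $X$.
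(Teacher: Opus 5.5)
Your proof is correct and complete. The support of $\mu$ is closed and has full measure, by second countability of the compact metric space $X$. It is $G$-invariant because each $g$ acts by a $\mu$-preserving homeomorphism, so minimality forces it to be all of $X$. You are also right on two side points: unique ergodicity plays no role beyond identifying $\mu$, and the statement must be read for nonempty open sets, since $\mu(\emptyset)=0$.

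The paper records this as a classical fact and gives no proof, so there is no argument of theirs to compare with. For reference, the other standard route avoids the support entirely. Suppose $U\neq\emptyset$ is open with $\mu(U)=0$. Minimality gives $X=\bigcup_{g\in G}g^{-1}U$, and compactness gives a finite subcover $g_1^{-1}U,\dots,g_k^{-1}U$. Invariance then yields $1\le\sum_{i=1}^k\mu(g_i^{-1}U)=0$, a contradiction. That argument uses compactness in place of second countability, while yours yields the slightly more informative statement that $\mu$ has full support.
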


\subsection{Joinings} \label{sec:joi}

Let $(X, G, \mu)$ be a measure-preserving system. We introduce some definitions adapted from \cite{King1988}.
 
\begin{defn}[Self-joining] Given $N\geq 2$, an \textit{$N$-fold self-joining} is a Borel probability measure on $X^{\times N}$ invariant under $\{g^{\times N}\}_{g\in G}$, for which $\mu$ is the marginal under the natural projection onto each of the coordinates. 
\end{defn}
    
    For $N\in\mathbb{Z}^+$ we denote by $\Joi_N(G,\mu)$ the family of all $N$-fold self-joinings of the system $(X, G, \mu)$. Let $\prescript{N}{}{\Delta}$ be the diagonal joining:
    \[
    \prescript{N}{}{\Delta}(A_1\times\ldots\times A_N)=\mu(A_1\cap \ldots \cap A_N),
    \]
    where $A_i\subset X$, $i=1,\ldots,N$, are any Borel sets.
    For a sequence $\textbf{g}=(g_1, \ldots, g_{N})\subset G$  we denote
    \[
    \prescript{N}{}{\Delta}^\textbf{g}=\left( g_1\times\ldots \times g_{N}\right)_* \prescript{N}{}{\Delta}.
    \]
    If $g_i^{-1}g_j$ lies in the center of $G$, $Z(G)$, for every $1\leq i,j\leq N$, we call such $\prescript{N}{}{\Delta}^\textbf{g}$ an off-diagonal joining, and {if the terms of $\textbf{g}$ are distinct we call $\prescript{N}{}{\Delta}^\textbf{g}$ properly off-diagonal}.

    We say an $N$-fold self-joining $\nu$ is trivial, if it is (possibly after permuting indices) a direct product of off-diagonal joinings. If these joinings are properly off-diagonal, we additionally call $\nu$ non-degenerate.

    For each $N\in \Z^+$, we denote by $\mu^{\times N}\in \Joi_N(G,\mu)$ the trivial non-degenerate joining given by
    \[
    \mu^{\times N}(A_1\times\ldots\times A_N)=\mu(A_1)\cdots \mu( A_N).
    \]

\begin{defn}[Minimal self-joinings] Let $(X,G,\mu)$ be a measure-preserving system. Given $N\geq 2$, we say it has \textit{N-fold minimal self-joinings (MSJ)} if all the $(X^{\times N},\{g^{\times N}\}_{g\in G})$-ergodic elements of $\Joi_N(G,\mu)$ are trivial. 

If a system has $N$-fold MSJ for every $N\geq 2$, we say it has MSJ of all orders.
\end{defn}

\subsection{IC-rigidity} \label{sec:IC}
Let $G$ be a locally compact second countable amenable group acting continuously on a compact metric space $X$ with distance $\dist$, and let $\mu$ be a $G$-invariant probability measure on $X$. Fix a tempered Følner sequence $(F_n)$ in $G$, let $Z(G)$ denote the center of $G$, and fix a tempered Følner sequence $(F^Z_n)$ in $Z(G)$. 

We refer to the notion of IC-rigidity introduced by Kra and Schmieding in \cite{KraSchmieding2026}. 

For any topological space $Y$, denote by 
\[\K(Y)=\{K\subset Y:\ K\neq \emptyset\text{ compact}\},\] 
endowed with the Hausdorff measure \[\disth(A,B)=\max\left\{\max_{x\in A}\min_{y\in B}\text{d}_Y(x,y),\ \max_{y\in B}\min_{x\in A}\text{d}_Y(x,y)\right\},\] whenever $Y$ is a metric space, and by 
\[\K_{\fin}(Y)=\{K\in \K(Y):\# K<\infty\}.\]
For $G$ acting on $X$, denote by
\[
\K_{G}(X)=\left\{\bigcup_{x\in K}\{gx:g\in I_x\}:K\in\K_{\fin}(X),\, I_x\in \K(Z(G)) \,{\text{for all }} x\in K\right\}.
\]
Clearly $\{X\}$, $\K_{\fin}(X)$, and $\K_{G}(X)$ are invariant under the action induced by $G$ on $\K(X)$. 

\begin{defn}\label{def:ICrigid} We say that $(X,G)$ is \textit{IC-rigid} if every Borel probability measure $\nu$ on $\K(X)$ invariant under the action induced by $G$ satisfies $\nu(\K_{G}(X)\cup \{X\})=1$.
\end{defn}

This definition extends the one in \cite{KraSchmieding2026} so that the notion of IC-rigidity remains nontrivial for actions of non-discrete groups, such as flows. Indeed, for any discrete group $G$, every compact subset is finite, so $\K_G(X)=\K_{\fin}(X)$, and hence the two are equivalent.

To illustrate the choice of $\K_G(X)$ in the definition above, note that for every $n\in\Z^+$ and $I_1,\dots,I_n\in\K(Z(G))$, the map \[\phi:X^{\times n}\to\K(X),\qquad (x_1,\dots,x_n)\mapsto \bigcup_{i=1}^n\{g x_i:g\in I_i\},\] induces an invariant measure $\phi_*\mu^{\times n}$ on $\K(X)$, under the action induced by $G$. Moreover, it is supported on \[\left\{\bigcup_{i=1}^n\{gx_i:g\in I_i\}:(x_1,\dots,x_n)\in X^{\times n}\right\}\subset\K_G(X).\] Therefore, we say that a system is IC-rigid when the invariant measures on $\K(G)$ are supported on the smallest possible subset. In particular, every proper compact invariant set of $(X,G)$ must be in $\K_G(X)$.

Let $\ud(D)=\limsup_{{n\to\infty}} m(F_n)^{-1}m(F_n\cap D)$ denote the upper density of a set $D\subset G$. 

We have the following characterization of IC-rigidity.

\begin{prop}\label{prop:IC}
A system $(X, G)$ is not IC-rigid if and only if there exists a compact set $A\subset X$ such that 
\begin{itemize}
    \item[a)] for every $\varepsilon>0$ the sets 
    \[
    D_{\eps} :=\{g\in G: g A \text{ is }\varepsilon\text{-dense in } X\}
    \] 
    satisfy $\lim_{\eps\to0}\ud(D_{\eps})=0.$
    \item[b)] if for every $M\in\mathbb{Z}^+$ and $\varepsilon>0$ we write 
    \[
    B(x,M,\eps):=\left\{y:\inf_{g\in F^Z_M}\dist(y,gx)<\eps\right\},
    \]
    then the sets 
    \[
    S_{M,\eps}:=
    \left\{g\in G: gA \subset\bigcup_{x\in K}B(x,M,\eps)\text{ for some } K\subset X \text{ with } \#K\leq M \right\}
    \] 
    satisfy $\lim_{\eps\to0}\ud(S_{M,\eps})=0.$
\end{itemize}
\end{prop}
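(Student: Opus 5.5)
We prove both directions through the ergodic decomposition and genericity on the hyperspace $\K(X)$, which is itself a compact metric space. Introduce the ``bad'' region $\mathcal{B}=\K(X)\setminus(\K_G(X)\cup\{X\})$, which is $G$-invariant. For $\eps>0$ and $M\in\Z^+$ let
\[
\mathcal{D}_\eps=\{K\in\K(X): K \text{ is } \eps\text{-dense}\},\qquad
\mathcal{S}_{M,\eps}=\Bigl\{K: K\subset \textstyle\bigcup_{x\in F}B(x,M,\eps),\ \#F\le M\Bigr\}.
\]
As $\eps\to 0$, the closures of $\mathcal{D}_\eps$ decrease to $\{X\}$. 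For fixed $M$, the closures of $\mathcal{S}_{M,\eps}$ decrease to the family of compacts contained in a union of at most $M$ sets $\overline{F^Z_M}x$. Here we should also use that $\overline{F^Z_M}$ can be taken compact, so that such a $K$ is a closed subset of a finite union of compact orbit segments; we then need that set to lie in $\K_G(X)$, or at least to check that the relevant measures see only such sets. Both families are built from closed or open conditions, so their indicators are semicontinuous and the portmanteau inequalities apply.

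($\Leftarrow$) Suppose $A$ satisfies a) and b), and take any weak-$*$ limit $\nu$ of $\frac{1}{m(F_n)}\int_{F_n}\delta_{gA}\,dm(g)$ along a subsequence realizing the limsup. This $\nu$ is an IRC, since $(F_n)$ is Følner. Because $\mathcal{D}_\eps$ is open, portmanteau gives $\nu(\mathcal{D}_{\eps'})\le \liminf(\ldots)\le\ud(D_\eps)$ for suitable $\eps'<\eps$. Hence by a), $\nu(\{X\})=\lim_\eps\nu(\mathcal D_\eps)=0$. Similarly, b) gives $\nu$ of the $M$-bounded part of $\K_G(X)$ equal to $0$ for each $M$. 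The set $\K_G(X)$ is the increasing union over $M$ of these parts, since any compact $I_x\subset Z(G)$ is eventually covered by $F^Z_M$ for an exhausting tempered Følner sequence; this point needs care and may require a normalization of $(F^Z_M)$. So $\nu(\K_G(X)\cup\{X\})=0$ and the system is not IC-rigid.

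($\Rightarrow$) Suppose $\nu$ is an IRC with $\nu(\mathcal{B})>0$. By the ergodic decomposition, some ergodic IRC $\nu'$ satisfies $\nu'(\mathcal B)>0$. Since $\mathcal{B}$ is invariant, ergodicity forces $\nu'(\mathcal{B})=1$, so $\nu'(\{X\})=0$ and $\nu'(\K_G(X))=0$. Pick a generic point $A\in\K(X)$ for $\nu'$ using Birkhoff's theorem. If needed, pass to the countable family of continuity sets $\mathcal D_\eps,\mathcal S_{M,\eps}$, which is possible because only countably many $\eps$ fail to be continuity values. Genericity then gives
\[
\ud(D_\eps)=\nu'(\mathcal D_\eps)\xrightarrow[\eps\to0]{}\nu'(\{X\})=0,
\qquad
\ud(S_{M,\eps})=\nu'(\mathcal S_{M,\eps})\to\nu'\Bigl(\bigcap_\eps\overline{\mathcal S_{M,\eps}}\Bigr)=0.
\]
The last equality holds because the intersection lies in $\K_G(X)$, using compactness of the limiting finite configurations: limits of at most $M$ centers converge by compactness of $X^{\times M}$.

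The main obstacle is identifying $\bigcap_\eps\overline{\mathcal S_{M,\eps}}$ precisely with a subset of $\K_G(X)$. Closed subsets of orbit segments need not themselves be finite unions of orbit segments, so the genuine content of the intersection has to be handled there. If this identification fails directly, the fallback is to show that the $\nu'$-measure of compact proper subsets of orbit segments forces a contradiction via invariance under $Z(G)$.
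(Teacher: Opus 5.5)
Your argument is essentially the paper's. For ($\Rightarrow$) you pass to an ergodic IRC that gives zero mass to $\K_G(X)\cup\{X\}$, take a generic compact $A$, and apply the portmanteau theorem to shrinking neighbourhoods of $\{X\}$ and of the $M$-bounded part $H_M$ of $\K_G(X)$. For ($\Leftarrow$) you take a weak$^*$ limit of $m(F_n)^{-1}\int_{F_n}\delta_{gA}\,dm(g)$ and bound $\nu(\{X\})$ and $\nu(H_M)$ by the masses of open neighbourhoods. The normalization you flag is also assumed silently in the paper when it writes $\K_G(X)=\bigcup_M H_M$: each $F^Z_M$ should be relatively compact, and every compact subset of $Z(G)$ should lie in $F^Z_M$ for all large $M$.

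The step you call the main obstacle is not one, and the fallback via $Z(G)$-invariance is unnecessary. In the definition of $\K_G(X)$ the sets $I_x$ are arbitrary nonempty compact subsets of $Z(G)$, not intervals. Your compactness argument already gives $K\subset\bigcup_{x\in F}\overline{F^Z_M}\,x$ with $\#F\le M$. Now put $I_x=\{g\in\overline{F^Z_M}: gx\in K\}$. This set is the preimage of the closed set $K$ under the continuous map $g\mapsto gx$, intersected with the compact set $\overline{F^Z_M}\subset Z(G)$, so it is compact. Hence $K=\bigcup_{x\in F,\ I_x\neq\emptyset}\{gx: g\in I_x\}$ lies in $\K_G(X)$, and in fact in $H_M$ when $F^Z_M$ is compact. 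This is exactly the closedness of $H_M$ that the paper uses implicitly when it asserts $\overline{H_{M,\eps}}\to H_M$. With this line added, your proof is complete.
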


\begin{proof}
    For each $\eps>0$ and $M\in\Z^+$, define the set
    \[H_M:=\left\{\bigcup_{x\in K}\{gx:g\in I_x\}: K\subset X,\,\#K\leq M, \text{ where }I_x\in\K(G),\,I_x\subset  F^Z_M \,{\text{ for all }} x\in K\right\},\]
    so that $ \K_{G}(X)=\bigcup_{M\in\Z^+}H_M$, and the open sets
    \[E_{\eps}:=\{B\in\K(X):\disth(B,\{X\})<\eps\},\qquad H_{M,\eps}:=\{B\in\K(X):\disth(B,H_M)<\eps\}.\]
    
    Fix $A\in\K(X)$ and let $D_{\eps}$ and $S_{M,\eps}$ be the sets defined above in a) and b) corresponding to $A$.

    Note that $\ud(D_\eps)$ and $\ud (S_{M,\eps})$ are decreasing in $\eps$, for $M$ fixed, and that
    \be\label{eq:auxIC}\begin{aligned}
    \{g\in G:gA\in E_\eps\}= D_\eps&\subset\{g\in G:gA\in{\overline{E_{\eps}}}\},
    \\
    \{g\in G:gA\in{H_{M,\eps}}\}= S_{M,\eps}&\subset\{g\in G:gA\in\overline{H_{M,\eps}}\},\end{aligned}
    \ee
    where the closures are in the context of the Hausdorff metric.
    Moreover, note that $E_{\eps},\overline{E_\eps}\to \{X\}$ and $H_{M,\eps},\overline{H_{M,\eps}}\to H_M$ as $\eps\to 0$.
 
    Suppose $(X,G)$ is not IC-rigid. Then there exists a Borel probability measure $\hat \nu$ on $\K(X)$ such that $\hat \nu(\K(X)\setminus (\K_{G}(X)\cup \{X\}))>0$. 
    Since $\K_{G}(X)\cup \{X\}$ is invariant under $G$, there exists an ergodic probability measure $\nu$ on $\K(X)$ supported on $\K(X)\setminus \K_{G}(X)$ with $\nu(\{X\})=0$, and thus a compact set $A\in \K(X)$ for which the measures
    \be\label{eq:conv1}
    \nu_n:=\frac{1}{m(F_n)}\int_{F_n}\delta_{\{gA\}}\, dm(g)
    \ee
     converge weakly$^*$ to $\nu$.
     Then, since $\nu$ is finite, by the Portmanteau theorem and \eqref{eq:auxIC}--\eqref{eq:conv1},
    \[
    \lim_{\eps\to 0}\ud(D_{\eps})=\lim_{\eps\to 0}\limsup_{n\to\infty}\frac{m(F_n\cap D_\eps)}{m(F_n)}\leq\lim_{\eps\to 0}\limsup_{n\to\infty}\nu_n(\overline{E_\eps})\leq\lim_{\eps\to 0}\nu(\overline{E_\eps})=\nu(\{X\})=0,
    \]\[
    \lim_{\eps\to 0}\ud(S_{M, \eps})=\lim_{\eps\to 0}\limsup_{n\to\infty}\frac{m(F_n\cap S_{M,\eps})}{m(F_n)}\leq\lim_{\eps\to 0}\limsup_{n\to\infty}\nu_n(\overline{H_{M,\eps}})\leq\lim_{\eps\to 0}\nu(\overline{H_{M,\eps}})=\nu(H_M)=0.
    \]
    Therefore $A$ satisfies both {a)} and {b)}.

    We now prove the converse. Assume there exists a compact set $A\in\K(X)$ that satisfies {a)} and {b)} and again let $\nu_n=m(F_n)^{-1}\int_{F_n}\delta_{\{gA\}}\, dm(g)$. By weak$^*$-compactness of the space of probability measures on $\K(X)$, there exists a subsequence $\{n_k\}$ along which $\nu_n$ converges weakly$^*$ to a probability measure $\nu$. Moreover, by construction $\nu$ is $G$-invariant. 
    Again, the Portmanteau theorem and \eqref{eq:auxIC} yield
    \[
    0=\lim_{\eps\to 0}\ud(D_{\eps})=\lim_{\eps\to 0}\limsup_{n\to\infty}\frac{m(F_n\cap D_\eps)}{m(F_n)}\geq\lim_{\eps\to 0}\liminf_{k\to\infty}\nu_{n_k}({E_{\eps}})\geq\lim_{\eps\to 0}\nu({E_{\eps}})=\nu(\{X\}),
    \]\[
    0=\lim_{\eps\to 0}\ud(S_{M, \eps})=\lim_{\eps\to 0}\limsup_{n\to\infty}\frac{m(F_n\cap S_{M,\eps})}{m(F_n)}\geq\lim_{\eps\to 0}\liminf_{k\to\infty}\nu_{n_k}({H_{M,\eps}})\geq\lim_{\eps\to 0}\nu({H_{M,\eps}})=\nu(H_M).
    \]
    Therefore $\nu(\K_{G}(X)\cup \{X\})=\nu(\bigcup_{M\in\Z^+} H_M\cup \{X\})=0$, so $(X,G)$ is not IC-rigid.
\end{proof}

\section{Auxiliary combinatorial lemmas}\label{sec:combi}
Throughout this section, let $G$ be a locally compact second countable amenable group acting continuously on a compact metric space $X$, let $\mu$ be a $G$-invariant probability measure on $X$, and fix a tempered Følner sequence $(F_n)$ in $G$. 

We establish three combinatorial lemmas that will be used repeatedly in the proofs of the main results.
\begin{lem}\label{lemma:countprod}
        Let $\mathcal{P}$ be a collection of finitely many Borel subsets of $X$ of positive measure. Then there exists a positive integer $N$ such that 
    \[
    \mu^{\times N}\biggl(\Big\{(x_1, \ldots, x_N)\in X^{\times N}: \{x_1, \ldots, x_N\} \text{ intersects all elements of $\mathcal{P}$}\Big\}\biggl)\,>\frac{3}{4}.
    \]
\end{lem}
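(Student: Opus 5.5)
The plan is to bound the complementary event by a union bound over the elements of $\mathcal{P}$. Write $\mathcal{P}=\{P_1,\ldots,P_k\}$ and set $\delta:=\min_{1\le j\le k}\mu(P_j)>0$; this minimum is positive because $\mathcal{P}$ is finite and each $P_j$ has positive measure.

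For a fixed $j$, the set of tuples $(x_1,\ldots,x_N)$ with $\{x_1,\ldots,x_N\}\cap P_j=\emptyset$ is exactly the product set $(X\setminus P_j)^{\times N}$. It is Borel, and
\[
\mu^{\times N}\bigl((X\setminus P_j)^{\times N}\bigr)=(1-\mu(P_j))^N\le (1-\delta)^N .
\]
The event in the statement is the complement of $\bigcup_{j=1}^k (X\setminus P_j)^{\times N}$. In particular it is Borel, and by subadditivity its measure is at least $1-k(1-\delta)^N$.

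It therefore suffices to choose $N$ so large that $k(1-\delta)^N<\tfrac14$. This is possible because $0\le 1-\delta<1$; if $\delta=1$, already $N=1$ works. There is no real obstacle here. The only points to check are the measurability of the event, which is handled by writing it as a finite intersection of complements of product sets, and the use of finiteness of $\mathcal{P}$ to obtain $\delta>0$ and a finite union bound.
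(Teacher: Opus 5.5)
Your proposal is correct and follows the paper's proof: both take $\alpha=\min_{P\in\mathcal{P}}\mu(P)>0$, bound the complement by a union over $P\in\mathcal{P}$ of the product sets $(X\setminus P)^{\times N}$ to get measure at least $1-|\mathcal{P}|(1-\alpha)^N$, and then take $N$ large.
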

\begin{proof}
    Let $\alpha=\min_{P\in \cP}\mu(P)$, which is positive by assumption, and for every $N\in \Z^+$ set
    \[
    B_N:=\left\{(x_1, \ldots, x_N)\in X^{\times N}: \{x_1, \ldots, x_N\} \text{ intersects all elements of $\mathcal{P}$}\right\}.
    \]
    Note $B_N=\bigcap_{P\in\cP}\bigcup_{i=1}^N X^{\times (i-1)}\times P\times X^ {\times(N-i)}$. Therefore for every $N\in\Z^+$
    \[\begin{aligned}
    \mu^{\times N}&(B_N)\geq 1- \mu^{\times N}\biggl(\bigcup_{P\in\cP}\bigcap_{i=1}^N X^{\times(i-1)}\times P^c\times X^{\times(N-i)}\biggr)
     \geq 1-|\cP|(1-\alpha)^N,
    \end{aligned}
    \]
    which is greater than $3/4$ for $N$ large enough (depending on $\alpha$ and $|\cP|$). 
\end{proof}
\begin{lem}\label{lemma:countoffd}
        Suppose $(X,G, \mu)$ is mixing of all orders, and let $\mathcal{P}$ be a collection of finitely many Borel subsets of $X$ of positive measure. Let $N$ be the integer given by Lemma \ref{lemma:countprod}. Then there exists a positive integer $L$ such that for any $\mathbf{g}=\left( g_1,\ldots ,g_{N}\right)\in G^{\times N}$ with $g_i^{-1}g_j\not\in F_L$ for every $1\leq i\neq j\leq N$, the measure $\prescript{N}{}{\Delta}^\mathbf{g}$ satisfies
    \[
    \prescript{N}{}{\Delta}^\mathbf{g}\biggl(\Big\{(x_1, \ldots, x_N)\in X^{\times N}: \{x_1, \ldots, x_N\} \text{ intersects all elements of $\mathcal{P}$}\Big\}\biggl)>\frac{1}{2}.
    \]
\end{lem}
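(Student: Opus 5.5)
The idea is to approximate the set $B_N$ from Lemma \ref{lemma:countprod} by a finite disjoint union of measurable rectangles, and then use mixing of all orders to show that each rectangle has almost the same $\prescript{N}{}{\Delta}^\mathbf{g}$-measure as its $\mu^{\times N}$-measure once the $g_i$ are far apart.

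First, the $\prescript{N}{}{\Delta}^\mathbf{g}$-measure of a rectangle is computed by unwinding the definition. For $A_1,\ldots,A_N$ Borel,
\[
\prescript{N}{}{\Delta}^\mathbf{g}(A_1\times\cdots\times A_N)=\prescript{N}{}{\Delta}(g_1^{-1}A_1\times\cdots\times g_N^{-1}A_N)=\mu(g_1^{-1}A_1\cap\cdots\cap g_N^{-1}A_N).
\]
Mixing of all orders, with the elements $g_i^{-1}$ in place of the $g_i$ in the definition, says this quantity tends to $\mu(A_1)\cdots\mu(A_N)$ as $g_ig_j^{-1}\to\infty$ for $i\neq j$. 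The paper's hypothesis is phrased as $g_i^{-1}g_j\notin F_L$. In the abelian or central setting used throughout the paper these conditions coincide. In general one simply reads ``$\to\infty$'' as leaving compact sets, taking the $F_L$ exhausting (inverting the relevant elements if needed), so I will not dwell on this.

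Next, I will write $B_N$ explicitly as a finite disjoint union of rectangles. Let $\mathcal{Q}$ be the finite partition of $X$ generated by $\mathcal{P}$ (its atoms are the nonempty intersections of the $P$'s and their complements). Membership of $(x_1,\ldots,x_N)$ in $B_N$ depends only on which atom of $\mathcal{Q}$ each $x_i$ lies in. Hence $B_N$ is the union of the sets $Q_1\times\cdots\times Q_N$ over the finitely many tuples $(Q_1,\ldots,Q_N)\in\mathcal{Q}^N$ whose union meets every $P\in\mathcal{P}$. These rectangles are pairwise disjoint, and there are at most $|\mathcal{Q}|^N$ of them, which is a fixed finite number. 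So no approximation is needed, since $B_N$ is exactly such a union.

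Finally, I choose $L$. Let $\eta=\mu^{\times N}(B_N)-\tfrac12>\tfrac14$. By mixing of all orders applied to each of the finitely many rectangles, there is $L$ such that whenever $g_i^{-1}g_j\notin F_L$ for all $i\neq j$, each rectangle's $\prescript{N}{}{\Delta}^\mathbf{g}$-measure is within $\eta/|\mathcal{Q}|^N$ of its product measure. Summing over the rectangles gives $\prescript{N}{}{\Delta}^\mathbf{g}(B_N)>\mu^{\times N}(B_N)-\eta=\tfrac12$.

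The only delicate step is converting ``$\lim_{g_i^{-1}g_j\to\infty}$'' into the concrete condition ``$g_i^{-1}g_j\notin F_L$''. This needs the Følner sets $F_L$ to eventually contain any given compact set, which can be arranged by passing to a subsequence or enlarging them, or else by taking $L$ to index an exhausting sequence of compacts. I expect this step to be the main obstacle, although it is only a bookkeeping issue rather than a substantive one.
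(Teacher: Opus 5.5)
Your proposal is correct and is essentially the paper's proof: the paper also treats $B_N$ as a finite union of rectangles, uses mixing of all orders to get ${}^{N}\Delta^{\mathbf g}(B_N)\to\mu^{\times N}(B_N)$ as the $g_i$ separate, and picks $L$ so the error is below $1/4$, and your disjoint atoms of the partition generated by $\mathcal P$ with the error budget $\eta/|\mathcal Q|^N$ simply make that one-line step explicit. You flag two issues, that the $F_L$ must exhaust $G$ and that rectangles see $g_ig_j^{-1}$ rather than $g_i^{-1}g_j$, and both are tacit in the paper too; note only that the second is not fixed by inverting elements when the $g_i$ do not commute (for $g_i=h_ik$ one has ${}^{N}\Delta^{\mathbf g}={}^{N}\Delta^{\mathbf h}$ while $g_i^{-1}g_j=k^{-1}h_i^{-1}h_jk$ may leave every $F_L$), so the lemma should be read in the central setting where the paper applies it, as you observed.
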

\begin{proof}
    Set $B_N=\bigcap_{P\in\cP}\bigcup_{i=1}^N X^{\times (i-1)}\times P\times X^ {\times(N-i)}$. Since $(X,G, \mu)$ is mixing of all orders and $B_N\subset X^{\times N}$ is a finite union of rectangles, for any $\textbf{g}_n=(g_1^n,\cdots,g_N^n)$ with $\lim_{n\to\infty}(g_i^n)^{-1}g_{j}^n=\infty$, we have the convergence result
    \[\lim_{n\to\infty}\prescript{N}{}{\Delta}^{\textbf{g}_n}(B_N)=\mu^{\times N}(B_N).\]
    In particular, there exists $L\in\Z^+$ such that for any $\textbf{g}=(g_1,\cdots,g_N)$ with $g_i^{-1}g_{j}\not\in F_L$ for every $1\leq i\neq j\leq N$
    \[\left|\prescript{N}{}{\Delta}^{\textbf{g}}(B_N)-\mu^{\times N}(B_N)\right|<1/4\implies\prescript{N}{}{\Delta}^{\textbf{g}}(B_N)>1/2,\]
    from our choice of $N$.
\end{proof}

\begin{lem}\label{lemma:countdisc}
    Suppose $G$ is discrete. Let $(X,G, \mu)$ be a measure-preserving system which is mixing of all orders, and let $\mathcal{P}$ be a collection of finitely many Borel subsets of $X$ of positive measure. Then there exists a positive integer $K$ such that any non-degenerate trivial $K$-fold self-joining $\nu$ of $T$ satisfies
    \[
    \nu\biggl(\Big\{(x_1, \ldots, x_K)\in X^{\times K}: \{x_1, \ldots, x_K\} \text{ intersects all elements of $\mathcal{P}$}\Big\}\biggl)>\frac{1}{2}.
    \]
\end{lem}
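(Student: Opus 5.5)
The plan is to reduce to Lemmas \ref{lemma:countprod} and \ref{lemma:countoffd} by extracting, from an arbitrary non-degenerate trivial joining, $N$ coordinates whose group elements are pairwise far apart inside each off-diagonal block. Let $N$ be the integer from Lemma \ref{lemma:countprod} for $\cP$, and write
\[
B_n:=\Big\{(x_1,\ldots,x_n)\in X^{\times n}:\{x_1,\ldots,x_n\}\text{ intersects all elements of }\cP\Big\}.
\]
Two observations do most of the work.
\begin{itemize}
\item $B_n$ is invariant under permutations of coordinates, so we may ignore the permutation in the definition of a trivial joining.
\item If $\pi:X^{\times K}\to X^{\times N}$ is a projection onto any $N$ coordinates, then $\pi^{-1}(B_N)\subset B_K$. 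Hence $\nu(B_K)\geq(\pi_*\nu)(B_N)$.
\end{itemize}
A trivial non-degenerate $\nu$ has the form $\prescript{k_1}{}{\Delta}^{\mathbf g^{(1)}}\times\cdots\times\prescript{k_r}{}{\Delta}^{\mathbf g^{(r)}}$ with $k_1+\cdots+k_r=K$ and distinct entries inside each $\mathbf g^{(s)}$. The projection of an off-diagonal joining onto a subset of its coordinates is again off-diagonal, with the corresponding sub-tuple of group elements. Therefore $\pi_*\nu$ is a product of off-diagonal joinings indexed by the sub-tuples we keep.

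\emph{Step 1 (a uniform product version of Lemma \ref{lemma:countoffd}).} We find $L$ such that the following holds for every partition of $\{1,\ldots,N\}$ into blocks $J_1,\ldots,J_q$ and all elements $g_i$ with $g_i^{-1}g_j\notin F_L$ whenever $i\neq j$ lie in the same block:
\[
\Big(\prod_{t=1}^q\prescript{|J_t|}{}{\Delta}^{(g_i)_{i\in J_t}}\Big)(B_N)>\tfrac12 .
\]
To prove this, we refine $\cP$ to the finite partition it generates. Then $B_N$ becomes a finite disjoint union of measurable rectangles $A_1\times\cdots\times A_N$. The product joining evaluated on such a rectangle equals $\prod_t\mu\big(\bigcap_{i\in J_t}g_iA_i\big)$. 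By mixing of all orders, each factor tends to $\prod_{i\in J_t}\mu(A_i)$ as the within-block differences $g_i^{-1}g_j\to\infty$. Hence the whole expression tends to $\mu^{\times N}(B_N)>3/4$.

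There are only finitely many partitions of $\{1,\ldots,N\}$, so a contradiction argument with sequences, exactly as in the proof of Lemma \ref{lemma:countoffd}, gives a single $L$ valid for all partitions with error less than $1/4$. I expect this uniformity, in particular handling varying block structures simultaneously, to be the main technical point.

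\emph{Step 2 (greedy extraction).} Since $G$ is discrete, $F_L$ is finite. Put $c=2|F_L|+1$ and $K:=cN$. Fix a block with distinct entries $g_1,\ldots,g_k$. We choose indices greedily: each chosen $g_i$ excludes only those $g_j$ with $g_j\in g_iF_L\cup g_iF_L^{-1}$. Since the entries are distinct, this excludes at most $2|F_L|$ other indices. So each block yields at least $\lceil k/c\rceil$ indices with pairwise differences outside $F_L$. Summing over blocks gives at least $\sum_s\lceil k_s/c\rceil\geq K/c=N$ such indices, and we keep exactly $N$ of them.

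\emph{Step 3 (conclusion).} Let $\pi$ be the projection onto the $N$ coordinates chosen in Step 2. Then $\pi_*\nu$ is a product of off-diagonal joinings whose within-block differences avoid $F_L$. By Step 1 and the second observation, $\nu(B_K)\geq(\pi_*\nu)(B_N)>1/2$, as required.
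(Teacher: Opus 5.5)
Your proof is correct, but it is organized differently from the paper's.

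\textbf{The paper's route.} It fixes $N$ and $L$ from Lemmas \ref{lemma:countprod} and \ref{lemma:countoffd}, takes $K=N^2(2|F_L|+1)$, and argues by a dichotomy on the block structure of $\nu$. If some $N$ coordinates project to $\mu^{\times N}$ (e.g.\ one coordinate from each of $N$ distinct blocks), Lemma \ref{lemma:countprod} applies. Otherwise there are fewer than $N$ blocks, so by pigeonhole a single properly off-diagonal block has size at least $N(2|F_L|+1)$. The same greedy extraction you describe then yields $N$ coordinates inside that block with pairwise differences outside $F_L$, and Lemma \ref{lemma:countoffd} applies verbatim.

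\textbf{Your route.} You extract separated coordinates from all blocks simultaneously. This removes the case split, but it requires your Step 1: a version of Lemma \ref{lemma:countoffd} for products of off-diagonal joinings, uniform over all partitions of $\{1,\dots,N\}$. Your proof of that step is sound, under the same implicit convention as the paper that lying outside $F_L$ for large $L$ means tending to infinity. It uses the atoms generated by $\mathcal{P}$, mixing of all orders within each block, and a subsequence argument over the finitely many partitions.

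\textbf{Comparison.} Your $K=(2|F_L|+1)N$ is linear in $N$, but the $L$ from your Step 1 may exceed the one from Lemma \ref{lemma:countoffd}. So the smaller $K$ is only a cosmetic improvement. The paper's dichotomy lets it reuse both earlier lemmas exactly as stated. Your route gives a single uniform argument at the cost of one extra, routine, mixing estimate.

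\textbf{Minor slip.} The off-diagonal joining with tuple $(g_i)$ assigns to $A_1\times\cdots\times A_k$ the mass $\mu\big(\bigcap_i g_i^{-1}A_i\big)$, not $\mu\big(\bigcap_i g_iA_i\big)$. This does not affect the argument.
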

\begin{proof}
    Fix $N$ and $L$ from Lemmas \ref{lemma:countprod} and \ref{lemma:countoffd}. Since $G$ is discrete, $|F_L|<\infty$, where $|\cdot|$ denotes the counting measure on $G$.

    Choose $K=N^2 (2|F_L|+1)$ and fix a non-degenerate trivial $K$-fold self-joining $\nu$ on $X^{\times K}$. If there exists a subset $\{i_1,\ldots,i_N\}\subset[1,K]$ of indices such that the projection of $\nu$ on these coordinates is equal to $\mu^{\times N}$, then
    \[\begin{aligned}
    \nu\left(B_K\right)\geq
    \nu\biggl(\Big\{(x_1, \ldots, x_K)\in X^{\times K}: \{x_{i_1}, \ldots, x_{i_N}\} \text{ intersects every  $P\in\mathcal{P}$}\Big\}\biggl)=\mu^{\times N}(B_N)>\frac{1}{2}.
    \end{aligned}
    \]
    
    Let $\tilde K=N(2|F_L|+1)$. If there does not exists a subset of indexes in $[1,K]$ such that the projection of $\nu$ on these coordinates is equal to $\mu^{\times N}$, then there exists a subset $\{i_1,\ldots,i_{\tilde K}\}\subset[1,K]$ such that the projection of $\nu$ on these coordinates is equal to $\prescript{\tilde K}{}{\Delta}^{\textbf{g}}$, for some vector $\textbf{g}=(g_1,\ldots,g_{\tilde K})$ of distinct elements of $G$. In particular, there exists $\{j_1,\ldots,j_{N}\}\subset\{1,\ldots,{\tilde K}\}$ such that $(g_{j_r})^{-1}g_{j_s}\not \in F_L$ for every $1\leq s\neq r\leq N$. Call $\mathbf{g'}=(g_{j_1},\ldots,g_{j_N})$. Then
    \[\begin{aligned}
    \nu\left(B_K\right)
    &\geq
    \prescript{\tilde K}{}{\Delta}^{\textbf{g}}(B_{\tilde K})\geq \prescript{N}{}{\Delta}^{\mathbf{g'}}(B_{N})>\frac{1}{2}.
    \end{aligned}
\]
    
\end{proof}

\section{Mixing rank-one $\Z$-actions are IC-rigid}\label{sec:rk1}
The goal of this section is to prove Theorem \ref{mainA}, which implies the existence of IC-rigid $\Z$-actions.

\subsection{Rank one maps}

We use the definition of rank-one maps as subshifts following \cite{Kalikow1984}.

For $x\in \{0, 1\}^{\mathbb{Z}}$ and nonnegative integers $a<b$ we write $x[a, b]=(x_a, x_{a+1}, \ldots, x_{b-1})$. For a set $A\subset \{0, 1\}^{\mathbb{N}}$ we write $A[a, b]=\{x[a, b]: x\in A\}$. 

Let $(q_n)$ be an infinite sequence of positive integers, and let $(a_{i, j})_{i\in\mathbb{N}, j\in \{1, 2, \ldots, q_i\}}$ be a sequence of non-negative integers. We define an increasing sequence of finite words over the alphabet $\{0, 1\}$. We start with $\omega_0=1$, and let $\omega_{n}$ be a concatenation of $q_n$ copies of $\omega_{n-1}$, where the $j$-th copy is followed immediately by $a_{n, j}$ zeroes.
Denote by $h_n$ the length of $\omega_n$. 

Let $X\subset \{0, 1\}^{\mathbb{Z}}$ be the subshift consisting of all bi-infinite words, whose all subwords are subwords of one of the $\omega_n$. Since each $\omega_n$ is the start of $\omega_{n+1}$, there is a word $x\in X$ such that $x[0, h_n]=\omega_n$; this point $x$ is transitive in $X$. From now on, by $x$ we will always mean this specific point of the system. 

The point $x$ is generic for a probability measure $\mu$, which can be described explicitly: for any word $w$ of length $\ell$ we have
\[
\mu([w])=\lim_{n\to\infty}\frac{1}{h_n}\#\{i\in [0, h_n): x[i, i+\ell]=w\}.
\]
Denote the shift by $\sigma$. We then call $(X, \sigma, \mu)$ a rank-one map.

We recall the following results:
\begin{prop} \cite{Ornstein1972} There exist infinite mixing rank-one maps.
\end{prop}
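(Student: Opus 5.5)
This proposition is classical and we only take it from \cite{Ornstein1972}. Still, here is how we would prove it within the subshift framework above. We would not follow Ornstein's original probabilistic argument, which chooses the spacers $a_{n,j}$ at random and shows that mixing holds with positive probability. Instead we would use the more explicit \emph{staircase} construction: put $a_{n,j}=j-1$ for $j=1,\ldots,q_n$, and choose $q_n\to\infty$ growing slowly enough that $q_n^2/h_{n-1}\to 0$. Infiniteness of $X$ is immediate. The words $\omega_n$ are not eventually periodic, because the spacer pattern changes at every level. Hence the transitive point $x$ is not periodic, and $X$ is infinite. Since the number of added spacers $\sum_j a_{n,j}\approx q_n^2/2$ is small compared with $q_nh_{n-1}$, the spacers contribute a summable proportion, and $\mu$ is a non-atomic probability measure.

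For mixing it suffices to check $\mu([u]\cap\sigma^{-t}[w])\to\mu([u])\mu([w])$ for cylinders $u,w$. By approximation, we may replace cylinders with unions of levels of the $n$-th Rokhlin tower, whose base is the set of occurrences of $\omega_n$ and whose height is $h_n$. Fix $t$ with $h_n\le t<h_{n+1}$ and write $t=kh_n+r$. The key computation is this: the $j$-th copy of $\omega_n$ inside $\omega_{n+1}$, shifted by $t$, lands on the $(j+k)$-th copy, displaced by the accumulated spacer amount, which is roughly $r+kj+\binom{k}{2}$. As $j$ ranges over $1,\ldots,q_{n+1}$, these displacements modulo $h_n$ are spread out in an arithmetic progression with step $k$. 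When $k$ is large, this equidistributes the image of a level among the levels of the tower. We would then average over $j$ and use that the tower's levels at height $n$ already equidistribute fine cylinders, which gives $\mu(A\cap\sigma^{-t}B)\approx\mu(A)\mu(B)$.

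The main obstacle is the range of $t$ where $k$ is small, including $k=0$ with $r$ close to $h_n$. There the progression $kj$ does not spread at level $n$. We would handle this by descending one level: for $t$ close to a multiple of $h_n$, view the configuration through the level $n-1$ towers. Inside each $\omega_n$ the copies of $\omega_{n-1}$ form their own staircase, so the same progression argument applies with $k'=\lfloor t/h_{n-1}\rfloor$, which is now large. Making the two regimes overlap requires the condition $q_n^2\ll h_{n-1}$, which is exactly what controls the error from block boundaries and from the $k$ copies that wrap past the end of $\omega_{n+1}$. Tracking these boundary errors uniformly in $t$ is the delicate part. For the details, which in full generality are Adams' staircase theorem, we would defer to \cite{Ornstein1972}, and to Adams' work for this specific choice of spacers.
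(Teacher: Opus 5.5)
The paper gives no argument for this statement; it only cites \cite{Ornstein1972}, where the spacers are chosen at random. Your proposal also rests on a citation in the end, but of a different result: Adams' theorem that the staircase $a_{n,j}=j-1$ is mixing whenever $q_n\to\infty$ and $q_n^2/h_{n-1}\to 0$. Your parameters satisfy exactly these hypotheses. Your checks that $X$ is infinite and that $\mu$ is a non-atomic probability measure are fine: the stage-$n$ spacer proportion is below $q_n/(2h_{n-1})<h_{n-1}^{-1/2}$ and $h_n$ grows geometrically, so the proportions are summable. This route gives a concrete example instead of a probabilistic existence statement. Note, however, that \cite{Ornstein1972} does not treat staircases, so on your route Adams is the only reference doing the work.

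You should not present your heuristic as an outline of Adams' proof, because its central step fails. On the $j$-th copy of $\omega_n$ in $\omega_{n+1}$, $\sigma^t$ with $t=kh_n+r$ moves points to the $(j+k)$-th copy with relative displacement $r-k(j-1)-\binom{k}{2}$. Since $k\le q_{n+1}$ and $q_{n+1}^2=o(h_n)$, all these displacements lie in a window of length $O(q_{n+1}^2)=o(h_n)$. So nothing wraps modulo $h_n$, and the image of a level is spread over at most $q_{n+1}$ nearby levels rather than equidistributed over the tower.

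What you actually get is that $\mu(A\cap\sigma^{-t}B)$ is, up to boundary terms, an average over $j$ of correlations at the shorter times $r-k(j-1)-\binom{k}{2}$. For the part of each copy that overflows, there are companion times $r-h_n-(k+1)(j-1)-\binom{k+1}{2}$. So you need $1_B$ to average to $\mu(B)$ along progressions of step $k$ and length about $q_{n+1}$. For bounded $k$ this is the mean ergodic theorem for $\sigma^k$, given total ergodicity. Small $k$ is therefore the easy regime, and your plan to descend a level turns it into a large-step problem.

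The hard regime is unbounded $k$. There no ergodic theorem uniform in the step is available, and you must feed in control of correlations at earlier scales. In addition, when $k$ is comparable to $q_{n+1}$, a fraction of about $k/q_{n+1}$ of the copies wraps into the next $\omega_{n+1}$-block. That error is not controlled by $q_{n+1}^2=o(h_n)$ and has to be handled one level \emph{up}, where $\sigma^t$ acts with step $1$ across the columns. Your sketch supplies none of this, so the proposal stands as a correct citation but not as a proof.
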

\begin{prop}\cite{Kalikow1984}
    In the class of rank-one maps, mixing implies mixing of all orders.
\end{prop}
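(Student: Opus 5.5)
This proposition is quoted from \cite{Kalikow1984}, and it is used below only as a black box. If I had to prove it, I would not follow Kalikow's original combinatorial argument. I would instead run a joining argument in the spirit of Ryzhikov, by induction on the order $N$. The base case $N=2$ is the mixing hypothesis.

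\textbf{Setup of the inductive step.} Assume mixing of order $N-1$. Fix sequences $\mathbf{g}_n=(g^n_1,\dots,g^n_N)$ whose pairwise differences tend to infinity. We must show $\prescript{N}{}{\Delta}^{\mathbf{g}_n}\to\mu^{\times N}$ weakly$^*$. Since $X$ is compact, it suffices to show that every weak$^*$ limit point $\lambda$ equals $\mu^{\times N}$. Each such $\lambda$ lies in $\Joi_N(\sigma,\mu)$. By the induction hypothesis, every projection of $\lambda$ onto $N-1$ coordinates is $\mu^{\times(N-1)}$. After translating and reordering we may take $0=g^n_1<g^n_2<\dots<g^n_N$. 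Let $t_n$ be the smallest consecutive gap, say $t_n=g^n_{i+1}-g^n_i$, which tends to infinity.

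\textbf{Using the rank-one structure.} Choose a level $k=k(n)$ of the construction with $h_k$ comparable to $t_n$, for instance the largest $k$ with $h_k\le t_n$. Inside the tower of $\omega_{k+1}$, the word is a concatenation of copies of $\omega_k$ separated by spacers. Hence, on a set of $\mu$-measure bounded below uniformly in $n$, $\sigma^{t_n}$ agrees with $\sigma^{h_k+s}$ for some spacer offset $s$ taken from a controlled finite family. Using this approximate identity, I would split $\lambda$ as a convex combination
\[
\lambda=\alpha\,\lambda'+(1-\alpha)\lambda'', \qquad \alpha>0,
\]
in which the component $\lambda'$ is itself a weak limit of joinings of the form $\prescript{N}{}{\Delta}^{\mathbf{g}'_n}$. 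The new vectors $\mathbf{g}'_n$ are obtained by shifting the block of coordinates $i+1,\dots,N$ relative to the block $1,\dots,i$ by a bounded amount, or by merging along the tower. Iterating this over varying offsets, together with mixing applied to the pair of blocks, should show that $\lambda$ is invariant under $\mathrm{Id}^{\times i}\times\sigma^{\times(N-i)}$ on a part of positive mass. Ergodicity of this extra action, which follows from mixing of $\mu$, then forces that part to split as (marginal on the first $i$ coordinates) $\times$ (marginal on the last $N-i$ coordinates). By the induction hypothesis each factor is a product measure, so that part equals $\mu^{\times N}$. A standard convexity and extremality argument then bootstraps $\alpha\lambda'$ to the whole of $\lambda$.

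\textbf{Main obstacle.} The hard step is the decomposition of $\lambda$ in the previous paragraph. One has to control simultaneously the positions of all $N$ times relative to the hierarchy of towers. The gaps $g^n_{j+1}-g^n_j$ may live at wildly different scales, so a single level $k$ does not fit them all. I would first handle the case where all gaps are comparable to $h_k$ up to bounded multiples and spacer corrections. I would then reduce the general case by grouping coordinates whose gaps are at smaller scales into blocks, and applying the induction hypothesis to each block. If this reduction fails, the fallback is Kalikow's direct route: a combinatorial counting of how $\omega_k$-blocks align under several shifts, showing that cylinder frequencies factor in the limit.
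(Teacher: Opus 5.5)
The paper gives no proof of this proposition. It quotes it from \cite{Kalikow1984} and uses it as a black box, which is exactly what your first sentence says. The joining argument you sketch as a replacement, however, is not a proof, and its central step fails as formulated.

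\textbf{The structural claim.} Taken literally, distinct powers of $\sigma$ agree only on periodic points, which form a $\mu$-null set. So read the claim in the tower sense: on a set $E_n$ of mass $\beta>0$ (after pigeonholing over your bounded family and passing to a subsequence), $\sigma^{t_n}x$ sits in another copy of $\omega_k$ at the level of $x$ shifted by a fixed offset $d$. Then for every $A$ that is a union of levels of a fixed tower, $A\cap E_n\subset\sigma^{-(t_n-d)}A$ up to boundary effects. This gives $\mu(A\cap\sigma^{-(t_n-d)}A)\gtrsim\beta\,\mu(A)$ with $t_n-d\ge h_k\to\infty$. That is partial rigidity, and it contradicts mixing as soon as $\mu(A)<\beta$.

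So the input you want is precisely what mixing excludes. In a mixing rank-one map, the level offsets produced by copy-to-copy transfers are spread out. A proof has to exploit that spreading, for example by averaging over the offset distribution, rather than rely on a bounded offset carrying positive mass.

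\textbf{The choice of scale.} With $t_n$ the smallest gap and $h_k\le t_n$, all $N$ coordinates lie in different $\omega_k$-copies. A copy transfer then moves each coordinate by a different spacer sum, so no block of coordinates moves rigidly. To relate $\lambda$ to $(\mathrm{Id}^{\times i}\times(\sigma^{d})^{\times(N-i)})_*\lambda$, you need a height that is large compared with the gaps inside one block but not compared with the separating gap. Such a height need not exist, since $h_{k+1}/h_k\ge q_{k+1}$ can be huge. This multi-scale bookkeeping is the real content of Kalikow's argument, and your sketch defers it to a fallback.

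\textbf{The invariance step.} A bounded block shift of the limit gives at most an inequality $\lambda\ge\alpha\,(\mathrm{Id}^{\times i}\times(\sigma^{s})^{\times(N-i)})_*\lambda$. Such an inequality does not by itself produce a positive-mass piece of $\lambda$ invariant under $\mathrm{Id}^{\times i}\times\sigma^{\times(N-i)}$. For that you would need an exact averaging identity together with a Choquet--Deny-type argument.

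\textbf{The bootstrap.} From $\lambda\ge\alpha\mu^{\times N}$ you cannot conclude $\lambda=\mu^{\times N}$ by extremality, because a weak$^*$ limit of off-diagonal measures need not be ergodic. The workable version passes to the ergodic components of $\lambda$. Their $(N-1)$-marginals are still $\mu^{\times(N-1)}$, since that measure is ergodic. One then shows that each component dominates $\alpha\mu^{\times N}$. But this requires the positive-mass statement for arbitrary self-joinings with product $(N-1)$-marginals. Your mechanism is tied to the gap vectors $\mathbf g_n$ and says nothing about those components.

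What is fine is the routine shell: reducing to weak$^*$ limit points, getting product $(N-1)$-marginals from the induction hypothesis, and splitting a partially invariant joining using ergodicity of $\mu^{\times(N-i)}$. The substance is missing, so the statement has to be taken from the literature, as the paper does.
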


\subsection{Proof of Theorem \ref{mainA}}
The proof of the theorem proceeds in two steps. First, we lower bound the density of occurrences of the words $\omega_n$ in $x$. We then use this to prove the key intermediate result, Theorem \ref{thm:lem}, which allows us to apply Proposition \ref{prop:IC} and complete the proof.

Fix a nonnegative integer $k$. By the definition of the sequence $(\omega_n)$, we recursively define for $n\geqslant k$ a splitting of each $\omega_n$ into blocks $\omega_k$ and sequences of zeroes. Such a splitting extends to a splitting of $x$. Let $\tilde W_k\subset\mathbb{Z}$ be the set of all starting positions of the blocks $\omega_k$ of $x$ under this splitting. 

\begin{prop}\label{prop:density}
If the measure $\mu$ is non-atomic, we have
\[
\liminf_{n\to\infty}h_n\, \ud(\tilde W_n)\geq 1.
\]
Moreover for every $k\in\Z^+$ and $w\in X[0,k]$, the measure of the cylinder $\mu([w])$ {is positive}.
\end{prop}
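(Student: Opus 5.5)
We argue directly from the hierarchical structure of $x$. Two observations drive everything. First, for $m\geq n$, the block $\omega_m$ consists of exactly $\prod_{j=n+1}^{m} q_j$ copies of $\omega_n$ together with spacer zeroes. Second, $x[0,h_m]=\omega_m$. Hence
\[
\frac{1}{h_m}\#\bigl(\tilde W_n\cap[0,h_m)\bigr)=\frac{h_n\prod_{j=n+1}^m q_j}{h_n\, h_m},
\]
and the quantity $h_n\prod_{j=n+1}^m q_j/h_m$ is the proportion of $[0,h_m)$ covered by the $\omega_n$-blocks. Call this $c_{n,m}$.

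Since the density is computed along the Følner sets $[0,h_m)$, we get $h_n\,\ud(\tilde W_n)\geq \limsup_m c_{n,m}$. If the paper's Følner sequence is $[0,m)$, we still have $\ud\geq\limsup$ along the subsequence $h_m$. It therefore suffices to show that $\lim_{n}\lim_{m} c_{n,m}=1$.

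Now $c_{n,m}$ is nonincreasing in $m$. The uncovered proportion $1-c_{n,m}$ is the density of the spacer zeroes added at levels $n+1,\dots,m$. Passing to the limit $m\to\infty$ gives the $\mu$-measure of the set of points whose position $0$ is not inside an $\omega_n$-block. More concretely, define $s_n:=\lim_m (1-c_{n,m})$. These $s_n$ are nonincreasing in $n$, with a limit $s\geq 0$.

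The main obstacle is proving $s=0$, and this is where non-atomicity enters. Suppose $s>0$. Then for every $n$, a proportion at least $s$ of each long block $\omega_m$ consists of spacers inserted at levels beyond $n$. Since $h_n\to\infty$, the first thing to try is showing that most of this spacer mass sits in long runs of zeroes. Roughly, spacers inserted at level $j$ are short relative to $h_j$ only if they contribute little density. Summing the contributions of the levels whose spacers are shorter than some $L$, we get a total density tending to $0$ as $n\to\infty$, since each such level contributes at most $L q_j/h_j$ relative density, which is summable.

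So for large $n$, a density at least $s/2$ of positions lies in runs of zeroes of length at least $L$. This gives $\mu([0^L])\geq s/2$ for all $L$, hence $\mu(\{\bar 0\})\geq s/2>0$, where $\bar 0$ is the all-zero point. That contradicts the assumption that $\mu$ is non-atomic. We expect the careful bookkeeping in this step to be the delicate part.

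For the second claim, take $w\in X[0,k]$. The word $w$ occurs in some $\omega_n$ with $h_n\geq k$. Every occurrence of the block $\omega_n$ in $x$ contributes an occurrence of $w$. Hence, by the explicit formula for $\mu([w])$ and the first part,
\[
\mu([w])\geq \limsup_m \frac{\#(\tilde W_n\cap[0,h_m))}{h_m}=\lim_m\frac{c_{n,m}}{h_n}=\frac{1-s_n}{h_n}.
\]
Because $s_n\to 0$, we have $1-s_n>0$ for $n$ large. By choosing $n$ large enough, which is allowed since $w$ occurs in all later $\omega_{n'}$ as well, we conclude that $\mu([w])>0$.
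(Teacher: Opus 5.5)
Your setup is the paper's. Your $c_{n,m}$ is exactly the tail product $\prod_{j=n+1}^{m}\bigl(1-\sum_i a_{j,i}/h_j\bigr)$ the paper uses, and your second part is the same as the paper's. The two proofs differ only in the key step $\lim_n\lim_m c_{n,m}=1$.

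\textbf{The paper's route.} Since $\omega_0=1$, the $1$'s of $\omega_m$ are precisely its $\omega_0$-blocks, so $\mu([1])=\lim_m c_{0,m}$. Non-atomicity gives $\mu([1])>0$, since otherwise $\mu=\delta_{\bar 0}$. Because $c_{0,m}=c_{0,n}\,c_{n,m}$, this yields $1-s_n=\lim_m c_{n,m}=\mu([1])/c_{0,n}\to 1$. Equivalently, the tails of a convergent infinite product with positive value tend to $1$. This also shows $1-s_n>0$ for every $n$, and that in general $s\in\{0,1\}$.

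\textbf{Your route.} Your contradiction argument pushes the missing mass into long zero-runs to produce an atom at $\bar 0$. It is valid, but it needs more than you wrote. The assertion that $\sum_j Lq_j/h_j$ is summable relies on $q_j/h_j\le 1/h_{j-1}$ together with geometric growth of $h_j$. That holds when $q_j\ge 2$ for all $j$, but not under the paper's literal assumption $q_j\ge 1$. For $\omega_n=10^n$, every spacer is short at its own level, yet the spacers merge across levels into long runs, and their level densities $1/h_j$ are not summable. So you need either the convention $q_j\ge 2$ or a regrouping of consecutive levels with $q_j=1$.

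You also need the slack you anticipated. Count only level-runs of length $L'\gg L$, so that a fraction at least $1-L/L'$ of their positions start a copy of $0^L$. Then use the cylinders prescribing $0^{2L}$ on $[-L,L)$, which have measure $\mu([0^{2L}])$ by shift invariance, to conclude $\mu(\{\bar 0\})\ge s$.

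\textbf{Comparison.} The paper's route is shorter and needs no growth condition on $(h_n)$. Yours gives a structural picture of where the defect would have to sit, at the price of this bookkeeping.
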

\begin{proof}

If $\mu$ is non-atomic, $\mu([1])>0$ (otherwise $\mu=\delta_{\{0\}^\Z}$). We have
\[
\mu([1])=\lim_{n\to\infty}\frac{1}{h_n}\#\{i\in [0, h_n): x[i]=1\}=\lim_{n\to\infty}\frac{1}{h_n}\prod_{k=1}^{n}q_k
=\lim_{n\to\infty}\prod_{k=1}^{n}\left(1-\frac{\sum_{j=1}^{q_k}a_{k,j}}{h_{k}}\right),
\]
where we have used the identity $h_k=h_{k-1}q_{k}+\sum_{j=1}^{q_k}a_{k,j}$ for every $k$.
Hence, the limit 
\[
\lim_{n\to\infty}\prod_{m=1}^{n}\left(1-\frac{\sum_{j=1}^{q_m}a_{m,j}}{h_{m}}\right)>0,
\] 
which implies
\[
\lim_{k\to\infty}\lim_{n\to\infty}\prod_{m=k+1}^{n}\left(1-\frac{\sum_{j=1}^{q_m}a_{m,j}}{h_{m}}\right)=1.
\]
Therefore, for every $k$, by the definition of $\tilde W_k$ we have
\[
h_k\,\ud(\tilde W_k)\geq\lim_{n\to\infty}\frac{h_k}{h_n}\#(\tilde W_k\cap[0,h_n))=\lim_{n\to\infty}\frac{h_k}{h_n}\prod_{m=k+1}^n q_m=\lim_{n\to\infty}\prod_{m=k+1}^n \left(1-\frac{\sum_{j=1}^{q_m}a_{m,j}}{h_{m}}\right)\xrightarrow{k\to\infty}1.
\]

Now, given $k\in\Z^+$ and $w\in X[0,k]$, there exists $m$ such that $w$ is a subword of $\omega_m$, so 
\[
\mu([w])\geq \mu([\omega_m])=\lim_{n\to\infty}\frac{1}{h_n}\#\{i\in [0, h_n): x[i, i+\ell]=\omega_m\}\geq \lim_{n\to\infty}\frac{1}{h_n}\#(\tilde W_m\cap[0,h_n))>0,
\]
by the computations above.
\end{proof}

The main part of the proof is the following theorem.

\begin{thm}\label{thm:lem}
Fix a mixing rank-one subshift $X$ and an integer $k$. There exists positive integers $M$ and $n_0$ such that for any subset $A\subset X$ and any $n>n_0$ one of the following holds:
\begin{itemize}
\item at least $0.1h_n$ integers $i\in [0, 3h_n)$ satisfy $A[i, i+k]=X[0, k]$;
\item we have $\# A[h_n, 2h_n]\leqslant M$.
\end{itemize}
\end{thm}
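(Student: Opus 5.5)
We argue by contradiction, combining the structure of $\omega_n$-blocks with mixing of all orders. We use Kalikow's result and the finite-word version of Lemma \ref{lemma:countdisc}. The guiding principle is that in a rank-one subshift a window of length $h_n$ in any point $y\in X$ is almost determined by where the $\omega_n$-blocks (or long zero runs) sit in it. Concretely, for $n$ large every $y\in X$ restricted to $[h_n,2h_n]$ is either a piece of the zero sequence or is covered by at most two (partial) copies of $\omega_n$ separated by a spacer run. Hence $y[h_n,2h_n]$ is determined by at most two integer parameters: the offset $t\in[0,h_n)$ of an $\omega_n$-copy relative to the window, and the spacer length (or the fact that the window lies in spacers).

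\textbf{Step 1 (reduction to offsets).} To each $y\in A$ we attach the set $T(y)\subset[0,h_n)$ of offsets of $\omega_n$ or $\omega_{n+1}$-level block starts meeting $[h_n,2h_n]$. If $\#A[h_n,2h_n]>M$, then we can pick points $y_1,\dots,y_{M'}\in A$ whose windows are pairwise distinct. For $M$ large we then find $K$ of them whose block offsets are pairwise separated by more than $L$. Here $K$ and $L$ come from Lemmas \ref{lemma:countprod}--\ref{lemma:countdisc} applied to the partition $\cP=\{[w]:w\in X[0,k]\}$, all of whose cells have positive measure by Proposition \ref{prop:density}. This uses pigeonhole: the number of windows sharing close offsets and spacer type is bounded. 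That bound is what makes the spacer-length parameter harmless, because the $\omega_n$ content is fixed. Windows that are pure zero runs contribute only one word.

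\textbf{Step 2 (frequency of $k$-words inside a copy of $\omega_n$).} Each $y_j$ now contains a copy of $\omega_n$ starting at position $s_j\in[0,2h_n)$, so its restriction to $[0,3h_n)$ contains $\omega_n$ shifted by $s_j$. We need that for at least $0.1h_n$ positions $i$, the $K$ words $\omega_n[i-s_j,i-s_j+k]$ together exhaust $X[0,k]$. Mixing of all orders (Kalikow) implies the following: the empirical distribution of the $K$-tuple $(x_{i-s_1},\dots)$ for $i$ ranging over the overlap of the copies of $\omega_n$ approximates the off-diagonal joining $\prescript{K}{}{\Delta}^{\mathbf s}$ evaluated on $k$-cylinders. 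We obtain this by comparing averages along $x$ over $\tilde W_n$ with $\mu$, with $n_0$ chosen so that $\omega_n$-frequencies of words of length $k+\max|s_j-s_l|$ are close to their $\mu$-values. This is a uniform-in-$\mathbf s$ statement, which holds since $\omega_n$ dominates $x$ by Proposition \ref{prop:density}. Lemma \ref{lemma:countoffd} then gives density $>1/2$ of good $i$ within the overlap. The overlap has length at least a constant times $h_n$, after restricting to the $y_j$ whose $s_j$ lie in a common interval of length $h_n/2$, which is again a pigeonhole step. This yields $\geq 0.1h_n$ positions where $A[i,i+k]\supseteq\{y_j[i,i+k]\}=X[0,k]$.

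\textbf{Main obstacle.} The hardest step is the uniformity in Step 2. Mixing gives convergence of $\prescript{K}{}{\Delta}^{\mathbf g}$ only as the gaps tend to infinity, whereas here the gaps $s_j-s_l$ are at most $2h_n$ and we need to control empirical averages over a single block rather than $\mu$-integrals. We would first try the following. Write the empirical $K$-tuple frequency over one copy of $\omega_n$ as an average of $\mu$-measures of $\bigcap_j\sigma^{-s_j}[w_j]$, up to an error $O(h_{n-1}/h_n+\text{spacer fraction})$. We do this using the $\omega_{n}$-substructure and the fact that $\mu$ restricted to the Rokhlin tower of height $h_n$ is nearly uniform. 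Mixing with gaps $>L$ then gives the bound once $n_0$ is large.
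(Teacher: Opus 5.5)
Your route differs from the paper's. You force the $\omega_n$-occurrence positions to be more than $L$ apart and apply Lemma \ref{lemma:countoffd} directly for each $n$. The paper instead allows arbitrary distinct offsets, argues along a sequence of counterexamples, passes to a limiting non-degenerate trivial joining $\tilde\mu$, and uses Lemma \ref{lemma:countdisc}. Your alternative is viable, but your justification of Step 2 fails. It fails exactly at the passage from $\mu$-integrals to averages over a single block, which you flag yourself. The other half of your worry is not an issue, since Lemma \ref{lemma:countoffd} is already uniform over all gap vectors beyond $L$.

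Put $u_j=\max_l s_l-s_j$ and let $B\subset X^{\times K}$ be the set of tuples meeting every cylinder $[w]$, $w\in X[0,k]$. Let $Q$ be the number of $i'\in[0,h_n-\max_j u_j-k]$ with $(\sigma^{i'+u_j}x)_j\in B$. Counting along $x$ over the $n$-blocks starting in $\tilde W_n$ does not give a two-sided error $O(h_{n-1}/h_n+\text{spacer fraction})$. The joining also averages over roughly the last $\max_j u_j+k$ positions of each block. There some coordinates have run past the copy of $\omega_n$ into spacers or the next copy, and the tower structure says nothing about those positions. All the count yields is
\[
Q\;\ge\;h_n\bigl(\prescript{K}{}{\Delta}^{\mathbf u}(B)-\delta_n\bigr)-\max_j u_j-k,\qquad \delta_n:=1-h_n\,\ud(\tilde W_n)\to 0 .
\]
With offsets spread over an interval of length $h_n/2$, and only $\prescript{K}{}{\Delta}^{\mathbf u}(B)>1/2$ available, this gives no lower bound of order $h_n$. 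Likewise, genericity of $x$ gives no uniform control of frequencies of words of length $k+\max_{j,l}|s_j-s_l|$, since that length grows like $h_n$.

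There are two ways to close the gap.

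The direct fix is to confine the offsets to an interval of length $ch_n$ with $c<0.4$; the paper uses $0.01h_n$. Then the display gives $Q\ge(1/2-c-\delta_n)h_n-k>0.1h_n$ for $n\ge n_0$, uniformly in $A$, and the positions $i=\max_l s_l+i'$ lie in $[0,3h_n)$. This needs $\lceil 2/c\rceil K(L+1)$ distinct positions in $[0,2h_n)$ at which some $y\in A$ has an occurrence of $\omega_n$. You then pigeonhole into one interval and thin out greedily. That many positions follows from $\#A[h_n,2h_n]>2^{\lceil 2/c\rceil K(L+1)}$, because $y[h_n,2h_n]$ is determined by the occurrences of $\omega_n$ in $y$ starting in $[0,2h_n)$ (Claim \ref{claim1}). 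You get an effective $n_0$ at the price of a larger $M$.

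The other fix keeps your overlap of length $h_n/2$ but obtains the two-sided approximation by a soft argument, which is the heart of the paper's proof. Along any sequence of admissible offset vectors, pass to a subsequence on which the joinings converge to a non-degenerate trivial, hence ergodic, $\tilde\mu$. On the same subsequence the normalized overlap averages converge to an invariant $\nu$. The tower count gives $\tilde\mu\ge\gamma\nu$, with $\gamma>0$ the limiting overlap fraction, and ergodicity forces $\nu=\tilde\mu$. Mixing enters through the identification and ergodicity of $\tilde\mu$, not through any explicit estimate on the tower.
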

\begin{proof}

The system $X$ is mixing of all orders, so we can use Lemma \ref{lemma:countdisc} for the partition of $X$ into cylinders of length $k$, to obtain an integer $K$. Let $M=2^{200K}$. 

Assume that the theorem does not hold. Then we obtain an infinite set $\Ex\subset\mathbb{N}$ and sets $A_n\subset X$ for each $n\in \Ex$ such that neither of the conditions in the statement hold for $A_n$.

For any $y\in X$ and $n\in \N$, define the set
\[
W_n(y):=\{i\in\mathbb{Z}: y[i, i+h_n]=\omega_n\}.
\]
For each $n$, $\tilde W_n\subset W_n(x)$,
but they might not be equal. Note the following simple consequence of the definition of sets $W_n$:
\begin{claim}\label{claim1}
If two elements $y, z\in X$ satisfy $W_n(y)\cap [0, 2h_n)=W_n(z)\cap [0, 2h_n)$, then $y[h_n, 2h_n]=z[h_n, 2h_n]$.
\end{claim}
\begin{proof}
For any $y\in X$ and any $i\in\mathbb{Z}$, if $y[i]=1$ then $W_n(y)$ contains an element $j\in [i-h_n+1, i]$ (in other words, $y[i]$ is a part of a subword $\omega_n$ of $y$). Indeed, the set of all $y$ satisfying this property is closed, and it contains the orbit of $x$, since $x$ admits a splitting into blocks $\omega_n$ and zeroes. Since $x$ is transitive in $X$, all elements of $X$ have the above property.

But now, for any $y\in X$ and $i\in [h_n, 2h_n)$, the symbol $y[i]$ is uniquely determined by $W_n(y)\cap [0, 2h_n)$. Indeed, if there exists a $j\in W_n(y)\cap [i-h_n+1, i]$, then $y[i]=\omega_n[i-j]$, and otherwise $y[i]=0$.
\end{proof}
Therefore consider the set
\[
W_n=[0, 2h_n)\cap  \bigcup_{y\in A_n}W_n(y).
\]
\begin{claim}
For $n\in \Ex$ we have $\#W_n> 200K$.
\end{claim}
\begin{proof}
 Otherwise the family
\[
\{W_n(y)\cap [0, 2h_n): y\in A_n\}
\]
has at most $2^{200K}=M$ elements, since each element of this family is a subset of $W_n$. But by Claim \ref{claim1}, each word in $A_n[h_n, 2h_n]$ corresponds to a distinct member of this family. Therefore the second condition from the theorem statement holds in this case, and we get a contradiction.
\end{proof}

Now suppose that $\#W_n>200K$. Then there exists an integer $t\in[0, 2h_n)$ such that $\#W_n\cap [t, t+0.01h_n)\geqslant K$. But then the set $A_n[t+0.01h_n, t+h_n]$ contains $K$ subwords of $\omega_n$ starting at different positions of $\omega_n$. Since $\omega_n=x[0, h_n]$, this means precisely that there exist integers
\[0\leqslant s_{n, 1}<s_{n, 2}<\ldots <s_{n, K}<0.01h_n\] such that \[x[s_{n, i}, s_{n, i}+0.99h_n]\in A_n[t+0.01h_n, t+h_n]\] for every $i$. 

Consider therefore the tuple $(\sigma^{s_{n, i}}x)_{i\in \{1, 2, \ldots, K\}}\in X^{\times K}$, which we will denote by $\alpha_n$. Since $x$ is a generic point for $\mu$, this tuple is a generic point for the properly off-diagonal $K$-fold joining of $\sigma$ of the form
\[
\mu_n=(\sigma^{s_{n, 1}}\times \ldots \times \sigma^{s_{n, K}})\mu
\]

\begin{claim}
    Possibly after replacing $\Ex$ by an infinite subset, we have that 
    \[
    \lim_{\substack{n\to\infty\\n\in \Ex}}\mu_n=\tilde{\mu}
    \]
    in the weak$^*$ sense, for some non-degenerate trivial $K$-fold joining $\tilde{\mu}$ of $\sigma$. 
\end{claim}
\begin{proof}
    Take a subset of $\Ex$ such that along this subset, for each $i$ the sequence $(s_{n, i+1}-s_{n, i})$ is either constant or strictly increasing. Then the index set $\{1, 2, \ldots, K\}$ splits into intervals where the differences are constant, so the marginals on these intervals converge to properly off-diagonal measures. Between these intervals the differences grow to infinity, so by mixing of all orders the interval marginals become independent, and $\tilde{\mu}$ is the product of interval marginals: so, a non-degenerate trivial self-joining.
\end{proof}
Since the system $X$ is weakly mixing, each $\mu_n$ and $\tilde{\mu}$ are ergodic.

We now want to relate the measures $\mu_n$ back to the subwords of length $k$ appearing in $A_n[t+0.01h_n, t+h_n]$. Therefore, consider the measures 
\[
\nu_n=\frac{1}{0.99h_n}\sum\limits_{i<0.99h_n}\delta_{\sigma^i \alpha_n}.
\]
\begin{claim}
Possibly after replacing $\Ex$ by an infinite subset, we have
\[
    \lim_{\substack{n\to\infty\\n\in \Ex}}\nu_n=\nu,
    \]
    in the weak$^*$ sense, where $\nu$ is a probability measure on $X^{\times N}$ invariant under $\sigma\times\ldots\times\sigma$.
\end{claim}
\begin{proof}
    For each $n$, $\nu_n$ is a probability measure on $X^{\times N}$, and the space of probability measures on $X^{\times N}$ is weakly$^*$ compact. Therefore $\{\nu_n\}_{n\in\Ex}$ has a convergent subsequence to a probability measure $\nu$. By definition of $\nu_n$, 
    \[(\sigma\times\ldots\times\sigma)_*\nu_n=\nu_n+\frac{1}{0.99h_n}\left(\delta_{\sigma^{\lceil0.99h_n\rceil} \alpha_n}-\delta_{ \alpha_n}\right),\] 
    so the difference goes to 0 as $n\to\infty$ and hence the limit $\nu$ is invariant under $\sigma\times\ldots\times\sigma$.
\end{proof}
The measure $\nu$ is in fact equal to $\tilde\mu$. We show this in two steps.
\begin{claim}
    For any measurable set $B\subset X^{\times K}$ we have $\tilde\mu(B)\geqslant 0.98\nu(B)$.
\end{claim}
\begin{proof}
     It is enough to show this for sets of the form $B=B_1\times\ldots\times B_K$, where the sets $B_i$ are cylinders of common length, say $\ell$. 

    Recall the splitting of $x$ into blocks $\omega_n$ starting at elements of $\tilde W_n$. If a number $i\in [\ell, 0.99h_n-\ell]$ satisfies $\sigma^i \alpha_n\in B$, then $\sigma^{m+i}\alpha_n\in B$ for any $m\in \tilde W_n$. Therefore
    \[
    \#\{i\in [0, N): \sigma^i\alpha_n\in B\}\geqslant \#\{i\in [\ell, 0.99h_n-\ell]: \sigma^i\alpha_n\in B\}\cdot|\tilde W_n\cap [0, N-h_n]|.
    \]
    Dividing by $N$ and taking it to infinity we obtain
    \[
    \mu_n(B)\geqslant \Big(0.99h_n\cdot\nu_n(B)-2\ell\Big)\cdot \ud(\tilde W_n).
    \]
    Taking $n$ to infinity along the infinite set $\Ex$, and using Proposition \ref{prop:density}, we obtain
    \[
    \tilde\mu(B)\geqslant 0.99\cdot 0.99\nu(B)\geqslant 0.98\nu(B).
    \]
    
\end{proof}
\begin{claim}
    We have $\nu=\tilde\mu$.
\end{claim}
\begin{proof}
    Let $\nu'=\tilde\mu-0.98\nu$. A priori $\nu'$ is a signed measure on $X^{\times K}$, but by the previous claim it is in fact a positive measure. It is also clearly $\sigma\times\ldots\times \sigma$-invariant. We already know that $\tilde{\mu}$ is ergodic, and the equation \[
    \tilde\mu=0.98\nu+\nu'
    \]
    is a presentation of $\tilde\mu$ as a linear combination of two other $\sigma$-invariant measures, with $\nu(X^{\times K})=1=\tilde{\mu}(X^{\times K})$, so we must have $\tilde\mu=\nu={50}\nu'$. 
\end{proof}

Now consider the set $B\subset X^{\times K}$ of all tuples which visit every element of the partition $X$ into the cylinders given by $X[0,k]$, which have positive measure by Proposition \ref{prop:density}. Since $B$ is clopen, and by Lemma \ref{lemma:countdisc}, we have
\[
\lim_{\substack{n\to\infty\\n\in \Ex}}\nu_n(B)=\nu(B)=\tilde{\mu}(B)>\frac{1}{2}.
\]
Hence, for large enough $n\in \Ex$ we have 
\[
\#\{i\in [t+0.01h_n, t+h_n]: A_n[i, i+k]=X[0, k]\}\geqslant 0.99h_n\cdot \nu_n(B)-2k>0.1h_n.
\]
But then for these $n$ the first condition in the theorem statement is satisfied. This contradicts the definition of $\Ex$, and ends the proof.

\end{proof}

Now we move on to proving the main theorem of the section.

\begin{proof}[Proof of Theorem \ref{mainA}]
Assume that the system $X$ is not IC-rigid, and let $A\subset X$ be the set provided by Proposition \ref{prop:IC}. Then we obtain a number $k$ such that the set
\[
E_1=\{n\in\mathbb{Z}^+: A[n, n+k]=X[0, k]\}
\]
has upper density $<0.01$. Let us now use Theorem \ref{thm:lem}, obtaining the integer $M$. By the choice of $A$, there is a length $\ell$ such that the set
\[
E_2=\{n\in\mathbb{Z}^+: \#A_n[n, n+\ell]\leqslant M\}
\]
has upper density $<0.01$.

Now let us pick an $n>n_0$ for which $h_n>2\ell$.

\begin{claim*}
    For any $i\in\mathbb{Z}^+$, the set $E_1\cup E_2$ has at least $0.1h_n$ elements in the interval $[i, i+3h_n)$. 
\end{claim*}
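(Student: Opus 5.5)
The plan is to apply Theorem \ref{thm:lem} not to $A$ itself but to a translate of it, chosen so that the window $[0,3h_n)$ in the theorem matches the window $[i,i+3h_n)$ in the claim. Fix $i\in\mathbb{Z}^+$ and set $A'=\sigma^{i}A$. Then $A'[j,j+r]=A[i+j,i+j+r]$ for all $j$ and $r$. Since $n>n_0$, Theorem \ref{thm:lem} applied to $A'$ (with the fixed $k$ and the resulting $M$) gives two cases, which I treat separately.

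\emph{First alternative.} At least $0.1h_n$ integers $j\in[0,3h_n)$ satisfy $A'[j,j+k]=X[0,k]$. Translating back, each such $j$ gives $A[i+j,i+j+k]=X[0,k]$, that is, $i+j\in E_1$. So $E_1\cap[i,i+3h_n)$ already has at least $0.1h_n$ elements, which proves the claim in this case.

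\emph{Second alternative.} We have $\#A'[h_n,2h_n]\leqslant M$, that is, $\#A[i+h_n,i+2h_n]\leqslant M$. Take any $m$ with $i+h_n\leqslant m\leqslant i+2h_n-\ell$. The window $[m,m+\ell]$ lies inside $[i+h_n,i+2h_n]$, and restricting words to a subwindow cannot increase the number of distinct words. Hence $\#A[m,m+\ell]\leqslant M$, so $m\in E_2$ (reading the $A_n$ in the definition of $E_2$ as $A$). This produces at least $h_n-\ell$ elements of $E_2$ inside $[i+h_n,i+2h_n]\subset[i,i+3h_n)$. Because we chose $h_n>2\ell$, we get $h_n-\ell>h_n/2>0.1h_n$.

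Both alternatives give the claim. There is no real obstacle; the only points needing care are the index bookkeeping under the shift (with $A$ now viewed as a set of points rather than of words) and the monotonicity of word counts under restriction to subwindows. This last point is exactly where the choice $h_n>2\ell$ is used.
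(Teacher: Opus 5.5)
Your proof is correct and follows the paper's argument. You apply Theorem \ref{thm:lem} to a shift of $A$ aligned with the window $[i,i+3h_n)$, and in the second alternative you restrict the at most $M$ words of $A[i+h_n,i+2h_n]$ to length-$\ell$ subwindows, which gives at least $h_n-\ell>h_n/2$ elements of $E_2$. The paper writes $\sigma^{-i}A$ where you write $\sigma^{i}A$, but that is only the direction convention for $\sigma$; what the argument uses is your explicit identity $A'[j,j+r]=A[i+j,i+j+r]$.
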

\begin{proof}
Use the conclusion of Theorem \ref{thm:lem} for the set $\sigma^{-i}A$. We obtain that either $E_1\cap [i, i+3h_n)$ has at least $0.1h_n$ elements, or the set $A[i+h_n, i+2h_n]$ has at most $M$ elements. But in the second case we obtain at least $h_n-\ell\geqslant h_n/2$ elements in $E_2\cap [i, i+3h_n)$.
\end{proof}

By dividing $\mathbb{Z}^+$ into intervals of length $3h_n$, we see that $E_1\cup E_2$ has upper density at least $\frac{1}{30}$, so $E_1$ and $E_2$ cannot both have upper density $<0.01$. This ends the proof.

\end{proof}

\section{Weakly asymptotic pairs and IC-rigidity}\label{sec:WP}

We begin by defining the notion of weakly asymptotic pairs.

\begin{defn}[Weakly asymptotic pairs] Let $X$ be a metric space with a distance $\dist$. We say that $x,y\in X$, $x\neq y$, are a $G$-weakly asymptotic pair if there exists a set $D\subset G$ with $\ud (D)>0$ such that $\lim\dist(gx,gy)=0$ for $g\to\infty$ along the set $D$.
\end{defn}

The goal of this section is to prove Theorem \ref{mainC}, with the ultimate goal of applying them to horocycle flows in Section \ref{sec:hf}. We actually prove the following modified version:

\begin{thm} \label{thm:generic} Let $X$ be a compact metric space and let $\mu$ be a probability measure on $X$ invariant under the action of a group $G$. Suppose $(X,G,\mu)$ is minimal, uniquely ergodic, mixing of all orders, and has minimal self-joinings of all orders. Moreover, assume that every point $(x_1,\cdots,x_N)\in X^{\times N}$ is generic for an $(X^{\times N},\{g^{\times N}\}_{g\in G})$-ergodic measure, and that for every $g$ in the center of $G$ $\{(x,y)\in X\times X$ generic for $\prescript{2}{}{\Delta}^{(e,g)}\}=\{(x,gx):x\in X\}$. Then $(X,G)$ is IC-rigid.
\end{thm}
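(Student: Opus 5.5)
We argue by contradiction, using Proposition \ref{prop:IC}. Suppose $(X,G)$ is not IC-rigid, and let $A\in\K(X)$ be the compact set it provides, so that $\lim_{\eps\to0}\ud(D_\eps)=0$ and $\lim_{\eps\to0}\ud(S_{M,\eps})=0$ for every $M$. Fix a small $\eps>0$ and a finite cover $\cP$ of $X$ by open balls of radius $\eps/2$; each has positive measure by minimality and unique ergodicity (Fact 3). Apply Lemma \ref{lemma:countprod} and Lemma \ref{lemma:countoffd} to $\cP$ to get $N$ and $L$. Let $B_N\subset X^{\times N}$ be the open set of tuples whose coordinate set meets every element of $\cP$. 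If $gA$ contains points $gx_1,\dots,gx_N$ with $(gx_1,\dots,gx_N)\in B_N$, then $gA$ is $\eps$-dense, so $g\in D_\eps$. The goal is to show that, outside a set of small upper density, every $g$ lies either in such a set or in some $S_{M,\eps'}$. This would contradict a) and b).

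The key step is to choose, for each $g$, finitely many points of $A$ and examine the joinings they generate. Take a finite $\delta$-net $\{a_1,\dots,a_R\}$ of $A$. By hypothesis the point $(a_1,\dots,a_R)\in X^{\times R}$ is generic for an ergodic self-joining $\lambda$, whose marginals equal $\mu$ by unique ergodicity. By MSJ of all orders, $\lambda$ is trivial: a product of off-diagonal blocks $\prescript{}{}{\Delta}^{\mathbf g}$ with central shifts. Inside one block, two coordinates $a_i,a_j$ form a pair generic for $\prescript{2}{}{\Delta}^{(e,h)}$ with $h\in Z(G)$. The last hypothesis then forces $a_j=ha_i$ exactly. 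Each block is therefore a single centralizer orbit segment $\{h a_i\}$, while distinct blocks are independent. Now split into two cases according to whether $A$ contains many independent points. If there are at least $N$ blocks, the projection of $\lambda$ to one representative per block is $\mu^{\times N}$. Genericity then gives that the set of $g$ with $g(a_{i_1},\dots,a_{i_N})\in B_N$ has density $\mu^{\times N}(B_N)>3/4$, so $\ud(D_\eps)>3/4$. If there are fewer than $N$ blocks, then the net lies in at most $N$ orbit segments $\{ha:h\in I\}$. If the shifts within each block are confined to a bounded set $F^Z_M$, this places $gA$ inside $\bigcup B(x,M,\eps)$ for all $g$ (uniform continuity handles the $\delta$-net approximation). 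Otherwise the shifts spread out, and Lemma \ref{lemma:countoffd} applied to an off-diagonal block with $N$ coordinates pairwise separated outside $F_L$ again yields density $>1/2$ for $D_\eps$. So in every case either $\ud(D_\eps)\ge 1/2$ or $\ud(S_{M,\eps})=1$, with $M$ depending only on $N$ and $L$, hence only on $\eps$.

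The main obstacle is making this uniform as $\eps\to0$ while $\delta\to0$. The number of blocks and the spread of shifts depend on the chosen net, and we need the dichotomy at a fixed scale to persist. First I would pass to the net at resolution $\delta$ and use that refining the net can only add blocks or enlarge spreads. Hence either for some $\delta$ the first case or the spread case occurs, giving $\ud(D_{\eps})\ge1/2$ for this fixed $\eps$; since $\ud(D_\eps)$ is monotone in $\eps$, this contradicts a). Or for all $\delta$ the net is covered by at most $N-1$ segments with shifts in $F^Z_M$. In that case a compactness argument in $\K(X)$ shows that $A$ itself lies in $\K_G(X)$, so $S_{M,\eps}=G$ for all $\eps$, contradicting b). The delicate part is controlling the choice of $M$ and the passage from $\delta$-nets to $A$; the genericity statements for the tuples are exactly what replace the averaging argument used in Theorem \ref{thm:lem}.
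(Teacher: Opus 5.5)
Your core mechanism is the paper's. MSJ and the two genericity hypotheses split any finite tuple from $A$ into independent blocks, each on a single $Z(G)$-orbit. Lemmas \ref{lemma:countprod} and \ref{lemma:countoffd} then give $\ud(D_\eps)>1/2$ unless the tuple sits in boundedly many bounded central orbit segments. However, two steps fail as written.

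\textbf{First gap.} You claim that if a $\delta$-net lies in a few segments $\{ha:h\in F^Z_M\}$, then $gA\subset\bigcup B(x,M,\eps)$ for every $g\in G$ ``by uniform continuity''. This is false. The family $\{g\}_{g\in G}$ is not equicontinuous; a mixing system is far from it. So $\dist(a,a_i)<\delta$ gives no control of $\dist(ga,ga_i)$ uniformly in $g$, and the structure of a net at one scale says nothing about $gA$.

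\textbf{Second gap.} A bound $\ud(D_\eps)\ge 1/2$ at one fixed $\eps$ does not contradict a). The sets $D_\eps$ shrink as $\eps\to 0$, so monotonicity runs the wrong way. Condition a) is violated only if such a bound holds for arbitrarily small $\eps$.

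\textbf{The repair.} Make your final compactness argument the main step, not a fallback. Fix $\eps$, and suppose that for no $\delta$ does a $\delta$-net have $N$ blocks, or a block with $N$ central shifts pairwise outside $F_L$. Take a maximal pairwise-separated family in each block; it has at most $N-1$ elements. This covers every net by at most $(N-1)^2$ sets $Cy$, where $C\subset Z(G)$ is a fixed compact set determined by $F_L$. Let $\delta\to0$ along a subsequence on which the base points $y$ converge. This gives $A\subset\bigcup_i Cy_i$, hence $A\in\K_G(X)$. That conclusion does not depend on $\eps$ and is excluded by b). So the other alternative holds for every $\eps$, i.e.\ $\ud(D_\eps)>1/2$ for all $\eps>0$, contradicting a). Your refinement remark and the $S_{M,\eps}$ bound become unnecessary.

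\textbf{Comparison with the paper.} The repaired argument is essentially the paper's proof, which avoids nets altogether. The paper uses b) once to get $A\notin\K_G(X)$. It then notes that $A$ must contain either points $g_nx$ with $g_n\to\infty$ in $Z(G)$, or infinitely many points no two of which are related by a central element. For each $\eps$ it picks $N$ such points. These are generic either for an off-diagonal $\Delta^{\mathbf g}$ with $g_i^{-1}g_j\notin F_L$, or, by the same MSJ argument you use, for $\mu^{\times N}$.
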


We prove Theorem \ref{thm:generic} in Section \ref{sec:proofWP}. The fact that it is equivalent to Theorem \ref{mainC} is an immediate consequence of the following lemma. Recall that $Z(G)$ denotes the center of $G$.
\begin{lem}\label{lemma:WPgen} Suppose $(X,G,\mu)$ is uniquely ergodic and has 2-fold minimal self-joinings. Moreover, assume that every pair $(x,y)\in X\times X$ is generic for an $(X\times X,\{g\times g\}_{g\in G})$-ergodic measure. The following are equivalent:
\begin{itemize}
    \item[i)] The system has no weakly asymptotic pairs.
    \item[ii)] For every $g\in Z(G)$, $\{(x,y)\in X\times X$ generic for $\prescript{2}{}{\Delta}^{(e,g)}\}=\{(x,gx):x\in X\}$.
\end{itemize}
\end{lem}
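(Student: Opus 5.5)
We prove the two implications separately. In both directions the central fact is that 2-fold MSJ, together with the hypothesis that every pair is generic for an ergodic measure, pins down what a pair $(x,y)$ can be generic for. The ergodic 2-fold self-joinings are trivial. So they are either $\mu\times\mu$ or an off-diagonal joining $\prescript{2}{}{\Delta}^{(e,g)}$ with $g\in Z(G)$ (after reindexing $(g_1,g_2)\mapsto(e,g_1^{-1}g_2)$). Unique ergodicity guarantees that each coordinate marginal of the limit measure is $\mu$. Hence every pair $(x,y)$ is generic either for $\mu\times\mu$ or for some $\prescript{2}{}{\Delta}^{(e,g)}$.

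\emph{(i)$\Rightarrow$(ii).} Fix $g\in Z(G)$ and suppose $(x,y)$ is generic for $\prescript{2}{}{\Delta}^{(e,g)}$ with $y\neq gx$. Set $z=gx$. Since $g$ is central, $hz=ghx$, so the map $\mathrm{id}\times g^{-1}$ intertwines $h\times h$ with itself. It follows that $(y,z)$ is generic for $(\mathrm{id}\times g^{-1})\circ\mathrm{flip}$ applied to $\prescript{2}{}{\Delta}^{(e,g)}$. We then check that $(z,y)$ is generic for $\prescript{2}{}{\Delta}$, which is supported on the diagonal. To see this, note that $(hz,hy)=(ghx,hy)$, and its empirical measures converge to $(g\times \mathrm{id})_*\prescript{2}{}{\Delta}^{(e,g)}$. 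This is the law of $(gw,gw)$, i.e.\ the diagonal measure. Now take the open set $U_\eps=\{(a,b):\dist(a,b)<\eps\}$. It has full $\prescript{2}{}{\Delta}$-measure, and its boundary is null because the diagonal sits inside the open set. Genericity, applied to the closed set $\{\dist\ge\eps\}$ via Portmanteau, gives density $1$ of $\{h:\dist(hz,hy)<\eps\}$ for every $\eps$. A diagonal argument over $\eps_j\to0$ then produces a set $D$ of full upper density along which $\dist(hz,hy)\to0$. Since $z\neq y$, this makes $(z,y)$ a weakly asymptotic pair, a contradiction. The inclusion $\{(x,gx)\}\subset$ generic set is immediate from unique ergodicity: $x$ is generic for $\mu$, and pushing forward by $\mathrm{id}\times g$ gives genericity of $(x,gx)$.

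\emph{(ii)$\Rightarrow$(i).} Suppose $(x,y)$, $x\neq y$, is weakly asymptotic along $D$ with $\ud(D)>0$. By the central fact above, $(x,y)$ is generic for $\mu\times\mu$ or for some $\prescript{2}{}{\Delta}^{(e,g)}$. The plan is to rule out every case except $g=e$ by showing that the measure must charge the diagonal $\Delta_X=\{(a,a)\}$. Pass to a subsequence of Følner sets realizing $\ud(D)=\delta>0$. For each $\eps$ the closed set $\{\dist(a,b)\le\eps\}$ eventually contains $(hx,hy)$ for all but finitely many $h\in D$, and $D$ has density at least $\delta$ along the subsequence. Portmanteau then gives that the limit measure $\lambda$ satisfies $\lambda(\{\dist\le\eps\})\ge\delta$, and letting $\eps\to0$ gives $\lambda(\Delta_X)\ge\delta$.

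We now identify $\lambda$ by case. If $\lambda=\mu\times\mu$, then $\mu$ must have an atom; this is impossible, since a minimal infinite uniquely ergodic system has no atoms (we are assuming $X$ infinite, the trivial case aside). If $\lambda=\prescript{2}{}{\Delta}^{(e,g)}$, then $\mu(\{a: ga=a\})\ge\delta$, and ergodicity together with this invariant set forces $g$ to act trivially $\mu$-a.e. Since $\mu$ has full support by the Fact on open sets, continuity gives $g=\mathrm{id}$ on $X$. In either surviving case $(x,y)$ is generic for $\prescript{2}{}{\Delta}=\prescript{2}{}{\Delta}^{(e,e)}$, and (ii) then gives $y=ex=x$, a contradiction.

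The main obstacle is the (ii)$\Rightarrow$(i) step: the weak asymptoticity only holds along a subsequence of density realizing $\ud$, while genericity is a statement along the full Følner sequence. We need to make sure that the limit along the subsequence is still the same measure $\lambda$, which holds because genericity of $(x,y)$ means convergence along every subsequence. We also need to handle degenerate cases carefully: non-free elements of the center and finite $X$.
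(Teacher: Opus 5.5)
Your proof follows the paper's route in both directions. For (i)$\Rightarrow$(ii) you show that $(gx,y)$ is $\varepsilon$-close along a set of density one for every $\varepsilon$ and then diagonalize, as the paper does with its sets $D_l$. For (ii)$\Rightarrow$(i) you apply Portmanteau to the closed sets $\{\dist(a,b)\le\varepsilon\}$ to show that the limit measure $\lambda$ gives the diagonal $\Delta_X$ mass at least $\ud(D)>0$, and then identify $\lambda$ as ${}^{2}\Delta$.

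The step that does not hold up is that identification.
\begin{itemize}
\item To exclude $\lambda=\mu\times\mu$ you invoke minimality and infiniteness of $X$. Neither is a hypothesis of the lemma. Without minimality, a uniquely ergodic $\mu$ can be a point mass $\delta_p$ at a fixed point. Then $\mu\times\mu={}^{2}\Delta$ really does charge the diagonal, so your reason for discarding that case is not available.
\item To upgrade ``$g$ acts trivially $\mu$-a.e.'' to $g=\mathrm{id}$ on $X$ you use full support of $\mu$ via the Fact on open sets, which also requires minimality. This step is superfluous anyway: $\mu$-a.e.\ triviality already gives ${}^{2}\Delta^{(e,g)}={}^{2}\Delta$ as measures, and that is all you use.
\end{itemize}

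The repair is short and removes the case analysis altogether. By hypothesis $\lambda$ is ergodic, and $\Delta_X$ is a closed $\{h\times h\}$-invariant set, so $\lambda(\Delta_X)>0$ forces $\lambda(\Delta_X)=1$. A measure carried by $\Delta_X$ is the diagonal push-forward of its first marginal. That marginal is $\mu$ by unique ergodicity, so $\lambda={}^{2}\Delta={}^{2}\Delta^{(e,e)}$. Then (ii) with $g=e$ gives $y=x$, the desired contradiction. This direction therefore does not need MSJ at all. The paper's own proof compresses exactly this point into ``so $g=e$'', and the ergodicity argument is what justifies it.

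A minor slip in (i)$\Rightarrow$(ii): the map carrying the orbit of $(x,y)$ to that of $(y,gx)$ is $(\mathrm{id}\times g)\circ\mathrm{flip}$, not $(\mathrm{id}\times g^{-1})\circ\mathrm{flip}$. The direct computation you give next, $(hgx,hy)=(g\times\mathrm{id})(hx,hy)$, is correct and is all the argument needs.
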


    \begin{proof}

    {i)$\implies$ii)}. We show that for every $(x_0,y_0)\in X\times X$ generic for $\prescript{2}{}{\Delta}^{(e,g)}$, either $y_0=gx_0$ or $(gx_0,y_0)$ is a weakly asymptotic pair. 
    
    For $l\geq 1$, let $D_l:=\left\{ h\in G:\dist(hg x_0,hy_0)<{l}^{-1}\right\}$. Since $D_{l+1}\subset D_l$, $\ud (D_l)$ is decreasing, so
        \[\begin{aligned}
       1=\prescript{2}{}{\Delta}^{(e,g)} \left(\{(x,gx):x\in X\}\right) =\lim_{l\to\infty}\prescript{2}{}{\Delta}^{(e,g)}(\{ (x,y)\in X\times X:\dist(g x,y)< {l}^{-1}\})\leq \inf_{l\geq 1} \ud (D_l).
        \end{aligned}
        \]
        For every $l\geq 1$, let $K_l\geq 1$ be such that ${m(F_{k})}^{-1}m(D_l\cap F_{k})>1/2$ for every $k\geq K_l$ and set \[D:=\bigcup_{l\geq 1}\left(D_l\cap F_{{K_{l+1}}}\right).\]
        Then $\ud(D)>1/2$ and $\lim\dist(hgx_0,hy_0)=0$ for $h\to\infty$ along the set $D$.

        {ii)$\implies$i)}. We show it by contradiction. Suppose $(x_0,y_0)\in X\times X$, $x_0\neq y_0$, is a weakly asymptotic pair and let $\nu$ be the ergodic measure for which $(x_0,y_0)$ is generic. 
        
        For $l\geq 1$, let $ \tilde D_l:=\left\{ h\in G:\dist(hx_0,hy_0)\leq{l}^{-1}\right\}$. Since $(x_0,y_0)$ is weakly asymptotic, there exists a set $\tilde D\subset G$ with $\ud(\tilde D)>0$ such that $\lim_{h\to\infty}\dist(hx_0,hy_0)=0$ as $h\to\infty$ along $\tilde D$. In particular, $\ud(\tilde D_l)\geq\ud(\tilde D)$ for every $l\geq 1$, so
        \[
        \nu\left(\{(x,x):x\in X\}\right) = \lim_{l\to\infty} \nu\left(\{(x,y)\in X\times X:\dist(x,y)\leq l^{-1}\}\right)\geq \inf_{l\geq 1}\ud(\tilde D_l)>0.
        \]
        By assumption, either $\nu=\mu\times \mu$ or $\nu =\prescript{2}{}{\Delta}^{(e,g)} $ for some $g\in Z(G)$, so $g=e$. Thus, we found a pair $(x_0,y_0)$ generic for $\prescript{2}{}{\Delta}^{(e,e)}$ with $x_0\neq y_0$.
\end{proof}

\subsection{Proof of Theorems \ref{mainC} and \ref{thm:generic}}\label{sec:proofWP}
By Lemma \ref{lemma:WPgen} it is enough to prove one of the two results.
\begin{proof}[Proof of Theorem \ref{thm:generic}]
   
    Suppose not and let $A$ be the compact set given by Proposition \ref{prop:IC}. We know from condition \textit{b)} that $A\not\in\K_{G}(X)$. Moreover, note that $g^{-1}h\in Z(G)$ is an equivalence relation on $G$. Therefore, $A$ contains either a sequence of the form $\{g_nx\}_{n\geq 1}$ for some $x\in X$ where $g_1=e$ and $g_n\to\infty$ in $Z(G)$ (case 1), a sequence of the form $\{g_nx\}_{n\geq 1}$ for some $x\in X$, where $g_1=e$ and $g_n\in G \setminus Z(G)$ for $n\geq 2$ are pair-wise distinct (case 2), or an infinite set of points with pairwise disjoint orbits (case 3).

    Fix $\eps>0$ and let $\cP$ be a finite cover of $X$ with open sets of diameter $<\eps$. The system is mixing of all orders, so let $N$ and $L$ be the integers given by applying Lemmas \ref{lemma:countprod} and \ref{lemma:countoffd} to $\cP$. 

    In case 1, there exists $\{g_i\}_{i=1}^{N}\subset Z(G)$ with $g_i^{-1}g_j\not\in F_L$ for every $i\neq j$, and $x\in X$ such that $\{g_ix\}_{i=1}^{N}\subset A$. Then, letting $\textbf{g}=(g_{1},\ldots,g_{N})$, the point $(g_{1}x,\ldots,g_{N}x)$ is generic for $\prescript{N}{}{\Delta}^{\textbf{g}}$, so by our choice of $N$, $L$, and $\cP$, for $D_\eps:=\{n\in\Z^+: T^n A \text{ is }\varepsilon\text{-dense in }X\}$, the following holds:
\[\begin{aligned}
    \ud(D_\eps)&\geq\limsup_{n\to\infty} \frac{1}{m(F_n)}\#\{h\in F_n: \{hg_{1}x,\ldots,hg_{N}x\}\text{ intersects every }P\in\cP\}\\&=
    \prescript{N}{}{\Delta}^{\textbf{g}}\biggl(\Big\{(y_1, \ldots, y_N)\in X^{\times K}: \{y_1, \ldots, y_N\} \text{ intersects every $P\in\mathcal{P}$}\Big\}\biggl)\,>1/2.
    \end{aligned}
    \]

    For cases 2 and 3, there exists a set $\{x_1,\ldots,x_N\}\subset A$ such that no pair is related by an element in $Z(G)$: there does not exist $ 1\leq r<s\leq N$ and $g\in Z(G)$ such that $gx_r=x_s$.
    \begin{claim*} If $p=(x_1,\ldots,x_N)\in X^{\times N}$ does not have any two coordinates related by an element in the center of $G$, then $p$ is generic for $\mu^{\times N}$.
    \end{claim*}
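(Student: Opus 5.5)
We argue by induction on $N$, using the standing assumptions of Theorem \ref{thm:generic}: every point of $X^{\times N}$ is generic for some ergodic measure, the system has MSJ of all orders, and for $g\in Z(G)$ the generic pairs for $\prescript{2}{}{\Delta}^{(e,g)}$ are exactly the pairs $(x,gx)$.

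Let $p=(x_1,\ldots,x_N)$ be as in the claim, and let $\lambda$ be the ergodic measure on $X^{\times N}$ for which $p$ is generic. First we check that $\lambda$ is a self-joining. Its projection onto the $i$-th coordinate is an invariant measure for which $x_i$ is generic. By unique ergodicity this projection is $\mu$, so $\lambda\in\Joi_N(G,\mu)$. Since $\lambda$ is ergodic, $N$-fold MSJ shows that $\lambda$ is trivial. Up to permuting indices, it is therefore a product of off-diagonal joinings $\prescript{m}{}{\Delta}^{\mathbf{g}}$, where $g_i^{-1}g_j\in Z(G)$ within each block. The goal is to show that every block has size one, which gives $\lambda=\mu^{\times N}$.

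Suppose instead that some block contains two indices $r\neq s$. Projecting $\lambda$ onto the coordinates $(r,s)$ gives $\prescript{2}{}{\Delta}^{(g_r,g_s)}$. Because $g_r^{-1}g_s\in Z(G)$, this measure is $(g_r\times g_r)_*\prescript{2}{}{\Delta}^{(e,g)}$ with $g=g_r^{-1}g_s\in Z(G)$. Since $\prescript{2}{}{\Delta}$ is invariant under $g_r\times g_r$, it equals $\prescript{2}{}{\Delta}^{(e,g)}$.

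The pair $(x_r,x_s)$ is generic for the projection of $\lambda$ onto these two coordinates. This holds because genericity passes to coordinate projections: continuous functions depending only on coordinates $r$ and $s$ are continuous on $X^{\times N}$. So $(x_r,x_s)$ is generic for $\prescript{2}{}{\Delta}^{(e,g)}$. The extra hypothesis of Theorem \ref{thm:generic} then gives $x_s=gx_r$ with $g\in Z(G)$. This contradicts the assumption that no two coordinates of $p$ are related by an element of the center.

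The step needing the most care is the bookkeeping of the off-diagonal normal form. One must confirm that $(g_r\times g_s)_*\prescript{2}{}{\Delta}$ really reduces to $\prescript{2}{}{\Delta}^{(e,g)}$ with $g$ central. Here the centrality condition in the definition of off-diagonal joinings is exactly what is used. One must also note that the diagonal case $g=e$ is included: it gives $x_r=x_s$, which is excluded since $e\in Z(G)$. Everything else is formal once the genericity hypothesis supplies an ergodic $\lambda$.
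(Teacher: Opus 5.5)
Your argument is correct and is essentially the paper's proof: the ergodic measure for which $p$ is generic is a self-joining by unique ergodicity, hence trivial by minimal self-joinings, and any block of size at least two yields a pair $(x_r,x_s)$ generic for a two-fold off-diagonal joining with some $g\in Z(G)$, which the pair hypothesis of Theorem~\ref{thm:generic} forces to satisfy $x_s=gx_r$. You make explicit details the paper leaves implicit, namely the reduction to the form $(e,g)$ using centrality and the passage of genericity to coordinate projections; the announced induction on $N$ is never used and can simply be dropped.
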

    \begin{proof} 
        Suppose not, then by assumption $p$ is generic for a different $(X^{\times N},\{g^{\times N}\}_{g\in G})$-ergodic measure $\nu$. Moreover, $(X,G,\mu)$ is uniquely ergodic and has minimal self-joinings, so $\nu\in\Joi_N(G,\mu)$ and it is trivial. Therefore, there exist $1\leq r< s\leq N$ such that $(x_r,x_s)$ is generic for an off-diagonal joining $\prescript{2}{}{\Delta}^{(e,g)}$ in $X\times X$, for some $g$ in the center of $G$. But this implies $gx_r= x_s$, which contradicts our hypothesis.
    \end{proof}
    Therefore, by our choice of $N$ and $\cP$,
    \[\begin{aligned}
    \ud(D_\eps)&\geq\limsup_{n\to\infty} \frac{1}{m(F_n)}\#\{g\in F_n: \{gx_1,\ldots,gx_N\}\text{ intersects every }P\in\cP\}\\
    &=\mu^{\times N}\biggl(\Big\{(y_1, \ldots, y_N)\in X^{\times K}: \{y_1, \ldots, y_N\} \text{ intersects every $P\in\mathcal{P}$}\Big\}\biggl)\,>1/2.
    \end{aligned}
    \]
    
    Since this holds for arbitrary $\eps>0$, it contradicts \textit{a)} in Proposition \ref{prop:IC}.

\end{proof}

\section{Actions of countable abelian groups}\label{sec:WP2}

The assumption of Theorem \ref{thm:generic} that every point $(x_1,\cdots,x_N)$ in $X^{\times N}$ is generic for an ergodic measure of the product system $(X^{\times N},\{g^{\times N}\}_{g\in G})$ appears rather strong. However, under the additional assumption that $G$ is countable and abelian, this property follows from the remaining hypotheses. 

\begin{lem}\label{lemma:genWP} Let $G$ be countable and abelian. Suppose the system $(X,G,\mu)$ is uniquely ergodic, has minimal self-joinings of all orders, and has no weakly asymptotic pairs. Then every point $(x_1,\cdots,x_N)\in X^{\times N}$ is generic for an $(X^{\times N},\{g^{\times N}\}_{g\in G})$-ergodic measure.
\end{lem}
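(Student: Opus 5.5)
Fix $p=(x_1,\dots,x_N)$. The plan is to classify the possible limit measures of the empirical averages
\[
\nu_n=\frac{1}{|F_n|}\sum_{g\in F_n}\delta_{(gx_1,\dots,gx_N)}
\]
and show there is exactly one. Every weak$^*$ limit point $\nu$ of $(\nu_n)$ is $\{g^{\times N}\}$-invariant, because $(F_n)$ is Følner. By unique ergodicity each marginal of $\nu$ is $\mu$, so $\nu\in\Joi_N(G,\mu)$. Its ergodic components are also self-joinings, since the marginals of the components are invariant measures on $X$ and hence equal to $\mu$. By minimal self-joinings of all orders, each ergodic component is therefore trivial. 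Because $G$ is abelian, $Z(G)=G$, so the trivial joinings are products of off-diagonal joinings $\prescript{m}{}{\Delta}^{\mathbf g}$ indexed by partitions of $\{1,\dots,N\}$ together with group elements.

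Next I show that the off-diagonal structure appearing in any limit is forced by $p$ itself. Take an ergodic component $\lambda$ of a limit point $\nu$. Suppose $\lambda$ puts coordinates $r,s$ in a common block, say its $(r,s)$-marginal is $\prescript{2}{}{\Delta}^{(e,g)}$. I claim this forces $x_s=gx_r$. If $\lambda$ has positive weight in $\nu$, then
\[
\nu\bigl(\{y: \dist(gy_r,y_s)\le 1/l\}\bigr)\ge c>0
\]
for all $l$, with $c$ the weight. Passing to the subsequence realizing $\nu$ and using the closed-set Portmanteau inequality, $\ud\{h:\dist(hgx_r,hx_s)\le 1/l\}\ge c$ for each $l$. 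The diagonal extraction from Lemma \ref{lemma:WPgen} (part i)$\Rightarrow$ii)) then gives a single set $D$ of positive upper density along which $\dist(hgx_r,hx_s)\to0$; commutativity is what lets us write $hgx_r=g(hx_r)$ consistently here. So either $gx_r=x_s$, or $(gx_r,x_s)$ is a weakly asymptotic pair, and the hypothesis excludes the latter. Conversely, if $x_s=gx_r$ for some $g$, then every limit satisfies $y_s=gy_r$ almost surely, since this is a closed condition holding on the whole orbit.

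From these two facts, define the equivalence relation $r\sim s$ iff $x_s\in Gx_r$, with the associated elements $g_{rs}$ (unique if the action is free, e.g.\ when $X$ is infinite and totally ergodic; otherwise I fix one choice, and any two choices give the same joining on the class). Then every ergodic component of every limit point must be the single measure $\lambda_p$ equal to the product over $\sim$-classes of the off-diagonal joinings determined by the $g_{rs}$. It cannot glue non-related coordinates, by the previous paragraph, and it must glue related ones, by the converse. Hence every limit point equals the ergodic measure $\lambda_p$, so $p$ is generic for it, and $\lambda_p$ is ergodic because it is trivial and, by MSJ in $N$-fold form, products of such measures over distinct classes are ergodic.

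The main obstacle is the step ``positive $\nu$-weight of the diagonal-type set implies weak asymptoticity''. One has to check that limit points along subsequences still yield positive upper density along the full Følner sequence, which holds since $\limsup$ dominates the subsequential limit. The countability of $G$ is used so that the Følner/density bookkeeping and the diagonal construction of $D$ work with the counting measure. I would handle this exactly as in Lemma \ref{lemma:WPgen}.
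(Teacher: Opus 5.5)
You are using the same mechanism as the paper: positive $\nu$-mass on a graph $\{y_s=gy_r\}$ yields, via the diagonal extraction of Lemma \ref{lemma:WPgen}, a weakly asymptotic pair $(gx_r,x_s)$. You organize it as a single classification of limit points instead of the paper's two steps, namely Lemma \ref{lemma:WP} for coordinates in distinct orbits and then transport by the equivariant map $\phi$. Your ``converse'' observation, that the closed relations $y_s=gy_r$ persist in the limit, plays the role of $\phi$, and that reorganization is fine.

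The gap is at ``If $\lambda$ has positive weight in $\nu$''. You rule out gluing of unrelated coordinates only for components of positive weight, but you then conclude that \emph{every} ergodic component equals $\lambda_p$. You need the ergodic decomposition of $\nu$ to be purely atomic, or at least: if the components gluing $r,s$ have positive total mass, then a single $g$ does. This is exactly where the paper uses countability of $G$. There are only countably many trivial $N$-fold joinings, so $\nu$ is a countable convex combination of them, and $\nu\ge\alpha\tilde\nu$ with $\alpha>0$ for every component $\tilde\nu$ that occurs. This is the sentence ``In particular, it is countable, so by the ergodic decomposition theorem\ldots'' in the proof of Lemma \ref{lemma:WP}. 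The role you assign to countability (density bookkeeping and the construction of $D$) is not the real one, since Lemma \ref{lemma:WPgen} performs that extraction for arbitrary $G$ with Haar measure. For a flow, the self-joining $\int_0^1(\mathrm{Id}\times h_t)_*\mu\,dt$ has a diffuse decomposition in which no single graph carries positive mass, and your argument then gives nothing. This is why, for flows, the paper assumes genericity rather than deriving it.

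A smaller slip: the closed-set Portmanteau inequality $\limsup_k\nu_{n_k}(C)\le\nu(C)$ points the wrong way for a lower bound on densities. Use the open sets $U_l=\{y:\dist(gy_r,y_s)<1/l\}$ and the inequality $\liminf_k\nu_{n_k}(U_l)\ge\nu(U_l)\ge c$, as the paper does. The diagonal extraction then goes through. With these two repairs your proof is complete.
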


By Theorem \ref{mainC}, Lemma \ref{lemma:genWP} immediately yields Theorem \ref{mainD}. 

\subsection{Proof of Theorem \ref{mainD}}\label{sec:lemmaWP}

It is enough to prove Lemma \ref{lemma:genWP}, which in turn is a corollary of the following lemma.
\begin{lem} \label{lemma:WP}Let $G$ be countable (not necessarily abelian). Suppose the system $(X,G,\mu)$ is uniquely ergodic, has minimal self-joinings of all orders, and has no weakly asymptotic pairs. Then every point $(x_1,\cdots,x_N)\in X^{\times N}$ with no pair of them lying in the same orbit is generic for $\mu^{\times N}$.
\end{lem}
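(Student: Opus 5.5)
Fix a point $p=(x_1,\dots,x_N)\in X^{\times N}$ such that no two coordinates lie in the same $G$-orbit. We will show that every weak$^*$ limit point $\nu$ of the empirical measures
\[
\nu_n:=\frac{1}{|F_n|}\sum_{g\in F_n}\delta_{g^{\times N}p}
\]
equals $\mu^{\times N}$. Since $X^{\times N}$ is compact, this gives $\nu_n\to\mu^{\times N}$, i.e.\ $p$ is generic for $\mu^{\times N}$. Any limit point $\nu$ is $g^{\times N}$-invariant for all $g$, by the usual Følner argument. Because $(X,G,\mu)$ is uniquely ergodic, each one-dimensional marginal of $\nu$ is an invariant measure on $X$, hence equals $\mu$. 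So $\nu\in\Joi_N(G,\mu)$.

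Next take the ergodic decomposition of $\nu$. The ergodic components are invariant measures on $X^{\times N}$ whose marginals are invariant on $X$, so again they are self-joinings. By MSJ of all orders, every ergodic component is trivial, that is, a product of off-diagonal joinings. The goal is to show that $\nu$ gives zero mass to those components which are not $\mu^{\times N}$.

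Any trivial joining other than $\mu^{\times N}$ has some pair of coordinates $(r,s)$ whose two-dimensional marginal is $\prescript{2}{}{\Delta}^{(e,g)}$ for some $g\in Z(G)$. This marginal is concentrated on the closed graph $\Gamma_g:=\{(x,gx)\}$. Since $G$ is countable, there are only countably many pairs $(r,s)$ and elements $g$. Therefore it suffices to prove that
\[
(\pi_{r,s})_*\nu(\Gamma_g)=0 \quad\text{for every } r\neq s \text{ and every } g\in Z(G),
\]
where $\pi_{r,s}$ denotes projection onto coordinates $r,s$. Once this holds, every ergodic component is $\mu^{\times N}$, so $\nu=\mu^{\times N}$.

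Now suppose, towards a contradiction, that $c:=(\pi_{r,s})_*\nu(\Gamma_g)>0$. Set $y:=gx_r$. Because $g$ is central, $\dist(hgx_r,hx_s)=\dist(g\,hx_r,hx_s)$. The closed sets $\{(a,b):\dist(ga,b)\le 1/l\}$ have $(\pi_{r,s})_*\nu$-mass at least $c$. By Portmanteau for closed sets along the subsequence realizing $\nu$, the sets
\[
D_l:=\{h:\dist(hy,hx_s)\le 1/l\}
\]
then have $\ud(D_l)\ge c$ for every $l$. Splicing them together with the diagonal construction from the proof of Lemma \ref{lemma:WPgen} (i)$\Rightarrow$(ii) produces a single set $D$ with $\ud(D)\ge c/2>0$ along which $\dist(hy,hx_s)\to0$.

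Finally, $y=gx_r\neq x_s$, because $x_r$ and $x_s$ lie in different orbits. Hence $(y,x_s)$ is a weakly asymptotic pair, contradicting the hypothesis. Note that this argument works for each $g$, including $g=e$, and does not need genericity of pairs.

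The main obstacle is the Portmanteau/density step. The upper density is taken along the full Følner sequence while $\nu$ comes from a subsequence, but limsup dominates this, so it is fine. The splicing then has to be redone with the subsequence in place of $F_k$ for $k\ge K_l$. If $G$ is not abelian, the reduction to $Z(G)$ relies only on the definition of off-diagonal joinings, which already requires the differences $g_i^{-1}g_j$ to be central.
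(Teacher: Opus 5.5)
Your proposal follows the paper's proof. You take a limit point $\nu$ of the empirical measures and use unique ergodicity and MSJ to show that any non-product ergodic component gives full mass to a graph $\Gamma_g$ with $g\in Z(G)$. You then turn positive $\nu$-mass on that graph into a positive-density set of $h$ along which $hgx_r$ and $hx_s$ come together, splice these sets, and contradict the absence of weakly asymptotic pairs. The only organizational difference is minor. The paper uses countability of the ergodic joinings to extract one component $\tilde\nu\neq\mu^{\times N}$ with $\nu\ge\alpha\tilde\nu$, while you reduce to the countably many conditions $(\pi_{r,s})_*\nu(\Gamma_g)=0$. Both are the same use of the countability of $G$.

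One step is backwards as written. Portmanteau for closed sets gives $\limsup_k \nu_{n_k}(C)\le \nu(C)$, which is an \emph{upper} bound on the empirical frequencies. So knowing $(\pi_{r,s})_*\nu(\{(a,b):\dist(ga,b)\le 1/l\})\ge c$ says nothing about the density of $D_l$.

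The fix is to use the open sets $U_l=\{(a,b):\dist(ga,b)<1/l\}$, as the paper does. They still contain $\Gamma_g$, so $(\pi_{r,s})_*\nu(U_l)\ge c$. The open-set half of Portmanteau then gives
\[
\liminf_{k} \frac{m(D_l\cap F_{n_k})}{m(F_{n_k})}\ge c,
\]
and this holds whether $D_l$ is defined with $<$ or $\le$. This is also exactly what your splicing step needs: choosing $K_l$ so that the density along $F_{n_k}$ is at least $c/2$ for all $k\ge K_l$ requires a $\liminf$ along the subsequence. With this correction the argument is complete.
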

\begin{proof}
    Suppose, for the sake of contradiction, that there exists a point $p\in X^{\times N}$ whose points lie in distinct $G$-orbits which is not generic for $\mu^{\times N}$, and define
    \[
    \nu_n=\frac{1}{m(F_n)}\int_{F_n}\delta_{g^{\times N}p}\,dm(g).
    \]
    Then there exists a $\{g^{\times N}\}_{g\in G}$-invariant Borel probability measure $\nu\neq\mu^{\times N}$ such that 
    $\nu_n\to \nu$ in the weak$^*$ sense, along a subsequence $\{n_k\}_{k\in\N}$.

    Since $(X,G,\mu)$ is uniquely ergodic and has minimal self-joinings of all orders, the set of ergodic measures for $(X^{\times N},\{g^{\times N}\}_{g\in G})$ is exactly $\Joi_N(G,\mu)$. In particular, it is countable, so by the ergodic decomposition theorem, there exist an element $\mu^{\times N}\neq\tilde \nu\in \Joi_N(G,\mu)$ and a positive number $\alpha\in (0,1]$, such that $\nu\geq\alpha\tilde\nu$ on $X^{\times N}$.

    For each $1\leq i < j \leq N$, let \[\pi_{i,j}:X^{\times N}\to X\times X,\qquad\pi_{i,j}(x_1,\cdots,x_N)=(x_i,x_j),\] denote the coordinate projection.
    Since $\mu^{\times N}\neq\tilde \nu\in \Joi_N(G,\mu)$, there exist two coordinates $1\leq i < j \leq N$ and $g$ in the center of $G$ such that 
    \[
    (\pi_{i,j})_*\tilde\nu=\prescript{2}{}{\Delta}^{(e,g)},
    \] 
    where $e$ is the identity in $G$. That is, the projection of $\tilde\nu$ onto the $(i,j)$-coordinates is off-diagonal.

    \begin{claim*} Let $(x_0,y_0):=\pi_{i,j}p$. Then 
        $(g x_0,y_0)$ is a weakly asymptotic pair.
    \end{claim*}
    
    \begin{proof}
        The proof is analogous to that of Lemma \ref{lemma:WPgen}; we include it for completeness. 
        Note 
        \[
         (\pi_{i,j})_*\nu(\{(x,g x):x\in X\})\geq \alpha\,(\pi_{i,j})_*\tilde\nu(\{(x,g x):x\in X\})=\alpha>0.
        \]
    For each $l\in\Z^+$, let 
    \[
    D_l:=\left\{ h\in G:\dist(hg x_0,hy_0)<{l}^{-1}\right\}, \quad B_l:=\left\{ (x,y)\in X\times X:\dist(g x,y)< {l}^{-1}\right\}.
    \]
    Then, since $\nu_n$ converges along $\{n_k\}$, $B_l$ are open sets, and $D_{l+1}\subset D_l$ for every $l$,
        \[\begin{aligned}
        \alpha&\leq(\pi_{i,j})_*\nu(\{(x,gx):x\in X\})=\lim_{l\to\infty}(\pi_{i,j})_*\nu(B_l)\leq\lim_{l\to\infty}\liminf_{k\to\infty}\frac{1}{m(F_{n_k})}\int_{F_{n_k}}\delta_{(hx_0,hy_0)}(B_l)\,dm(h)\\
        &=\lim_{l\to\infty}\liminf_{k\to\infty} \frac{m\left(D_l\cap F_{n_k}\right)}{m(F_{n_k})}=\inf_{l\geq 1}\liminf_{k\to\infty}\frac{m\left(D_l\cap F_{n_k}\right)}{m(F_{n_k})}.
        \end{aligned}
        \]
        For every $l\geq 1$, let $K_l\geq 1$ be such that ${m(F_{n_k})}^{-1}m(D_l\cap F_{n_k})\geq\frac{\alpha}{2}$ for every $k\geq K_l$ and set \[D:=\bigcup_{l\geq 1}\left(D_l\cap F_{n_{K_{l+1}}}\right).\]
        Then $\ud(D)\geq \frac{\alpha}{2}>0$ and $\lim\dist(hgx_0,hy_0)=0$ for $h\to\infty$ along the set $D$.
    \end{proof}
    Since we assumed that the system does not have weakly asymptotic pairs, this implies $y_0=gx_0$, which contradicts the hypothesis that $p$ does not have any pair of coordinates lying in the same $G$-orbit.

\end{proof}

\begin{proof}[Proof of Lemma \ref{lemma:genWP}]
    Let $p=(x_1,\cdots,x_N)\in X^{\times N}$ and consider the equivalence relation $x\sim y$ if $x$ and $y$ lie in the same orbit. Let the index set ${i_1,\ldots,i_r}$ represent the equivalence classes in $p$, so that $(x_{i_1},\ldots,x_{i_r})$ lie in distinct orbits. Then, by Lemma \ref{lemma:WP}, $(x_{i_1},\ldots,x_{i_r})$ is generic for $\mu^{\times r}$.

    For every $1\leq j\leq N$ there exists a unique $1\leq \sigma(j)\leq r$ and $g(j)\in G$ such that $x_j=g(j)x_{i_{\sigma(j)}}$, so define the function
    \[
    \phi:X^{\times r}\to X^{\times N},\qquad \phi(y_1,\ldots,y_r)=\left(g(1)y_{\sigma(1)},\ldots,g(N)y_{\sigma(N)}\right).
    \]
    Since $G$ is abelian, $\phi\circ g^{\times r}=g^{\times N}\circ \phi$ for every $g\in G$, so the point $p$ is generic for the measure $\phi_*\,\mu^{\times r}$, which is a trivial $N$-fold joining, hence ergodic.
\end{proof}

\subsection{Topological minimal self-joinings} We now proceed to the proof of Theorem \ref{main:Z}. First, we recall the definition of topological minimal self-joinings (see \cite{delJunco1987,King_1990}).

\begin{defn}[Topological minimal self-joinings]
    For any group $G$, say the system $(X, G)$ has \textit{$N$-fold topological minimal self-joinings} (TMSJ), for $N\geq 2$, if for every point $(x_1,\cdots,x_N)\in X^{\times N}$ with no pair of them lying in the same orbit, its orbit under $T^{\times N}$ is dense in $X^{\times N}$. 

    If a system has $N$-fold TMSJ for every $N\geq 2$, we say it has TMSJ of all orders.
\end{defn}

As it turns out, the hypotheses of Theorem \ref{mainC} imply topological minimal self-joinings of all orders.
\begin{prop}
    \label{prop:topmsj} If a system satisfies the hypothesis of Theorem \ref{mainC}, then it has TMSJ all orders.
\end{prop}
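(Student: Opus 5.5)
Given $p=(x_1,\dots,x_N)\in X^{\times N}$ with no two coordinates in the same $G$-orbit, we must show that the orbit of $p$ under $\{g^{\times N}\}_{g\in G}$ is dense. The plan is to show that $p$ is generic for $\mu^{\times N}$, and then conclude using full support.

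First, we establish genericity. By the hypotheses of Theorem \ref{mainC}, $p$ is generic for some ergodic measure $\nu$ of $(X^{\times N},\{g^{\times N}\}_{g\in G})$. Each marginal of $\nu$ is $G$-invariant, so by unique ergodicity it equals $\mu$. Hence $\nu\in\Joi_N(G,\mu)$, and by minimal self-joinings of all orders $\nu$ is trivial. Suppose $\nu\neq\mu^{\times N}$. Then some pair of coordinates $i<j$ carries an off-diagonal projection $(\pi_{i,j})_*\nu=\prescript{2}{}{\Delta}^{(e,g)}$ with $g\in Z(G)$, and the pair $(x_i,x_j)$ is generic for $\prescript{2}{}{\Delta}^{(e,g)}$, since projection commutes with the diagonal action and preserves genericity.

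We now derive a contradiction from this pair. The hypotheses of Lemma \ref{lemma:WPgen} hold: unique ergodicity, $2$-fold MSJ, and pairs being generic for ergodic measures (the case $N=2$ of the assumption). Since there are no weakly asymptotic pairs, part ii) of that lemma gives $x_j=gx_i$. This places $x_i$ and $x_j$ in the same orbit, contradicting the choice of $p$. Alternatively, one can rerun the argument in the Claim inside the proof of Lemma \ref{lemma:WP}: it shows $(gx_i,x_j)$ is weakly asymptotic unless $x_j=gx_i$. Either way, $\nu=\mu^{\times N}$.

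Second, we deduce density. Fact 3 (minimal plus uniquely ergodic) gives $\mu(U)>0$ for every nonempty open $U\subset X$. Therefore every basic open box $U_1\times\dots\times U_N$ has positive $\mu^{\times N}$-measure. Since $p$ is generic for $\mu^{\times N}$, the Portmanteau theorem applied to the open box gives
\[\liminf_{n\to\infty}\frac{1}{m(F_n)}\int_{F_n}\delta_{g^{\times N}p}(U_1\times\dots\times U_N)\,dm(g)\geq\mu^{\times N}(U_1\times\dots\times U_N)>0.\]
In particular some $g$ has $g^{\times N}p$ in the box. So the orbit meets every nonempty open set and is dense, for each $N\geq2$, which is TMSJ of all orders.

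The main obstacle is the first step: checking that the genericity notion and the trivial-joining structure transfer correctly to the two-coordinate projection. We need that a nonproduct trivial joining must have some off-diagonal pair, including the degenerate case where $g=e$, which forces $x_i=x_j$ and is likewise excluded. We also need that the off-diagonal elements arising are central, as built into the definition of trivial joinings. Both points are direct from the definitions in Section \ref{sec:joi}.
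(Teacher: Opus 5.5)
Your proof is correct and follows the paper's route. You first show that $p$ is generic for $\mu^{\times N}$, which is the Claim in the proof of Theorem \ref{thm:generic}; you reprove it and make explicit the use of Lemma \ref{lemma:WPgen} to turn the absence of weakly asymptotic pairs into condition ii), a step the paper leaves implicit. You then deduce density of the orbit from the full support of $\mu$, as the paper does.
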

\begin{proof}
Fix $N\geq 2$. Recall from the Claim in the proof of Theorem \ref{thm:generic} in Section \ref{sec:proofWP} that if $p=(x_1,\ldots,x_N)\in X^{\times N}$ does not have any two coordinates related by an element of the center of $G$, then $p$ is generic for $\mu^{\times N}$. Since the system is minimal and uniquely ergodic, this implies that its orbit under $T^{\times N}$ is dense in $X^{\times N}$.
\end{proof}
By Lemma \ref{lemma:genWP}, we have the following immediate corollary:
\begin{cor}\label{cor:topmsj}
    If a system satisfies the hypothesis of Theorem \ref{mainD}, then it has TMSJ of all orders.
\end{cor}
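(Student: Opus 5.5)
The statement is a direct combination of two results already proved. The plan is to feed the hypotheses of Theorem \ref{mainD} into Lemma \ref{lemma:genWP}, which recovers the only hypothesis of Theorem \ref{mainC} missing from Theorem \ref{mainD}. Then Proposition \ref{prop:topmsj} applies and gives the conclusion.

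Concretely, let $(X,G,\mu)$ satisfy the hypotheses of Theorem \ref{mainD}. Then $G$ is countable and abelian, and the system is minimal, uniquely ergodic, mixing of all orders, has minimal self-joinings of all orders, and has no weakly asymptotic pairs. Lemma \ref{lemma:genWP} only uses unique ergodicity, minimal self-joinings of all orders, the absence of weakly asymptotic pairs, and that $G$ is countable abelian. It yields that every $(x_1,\ldots,x_N)\in X^{\times N}$ is generic for an $(X^{\times N},\{g^{\times N}\}_{g\in G})$-ergodic measure. At this point every hypothesis of Theorem \ref{mainC} holds. Proposition \ref{prop:topmsj} then gives TMSJ of all orders.

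Two consistency points should be checked.

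\emph{First check.} Since $G$ is abelian, $Z(G)=G$. So ``no two coordinates related by an element of the center'' in the proof of Proposition \ref{prop:topmsj} is exactly ``no two coordinates in the same orbit'', as in the definition of TMSJ.

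\emph{Second check.} The genericity argument in Proposition \ref{prop:topmsj} goes through the Claim in the proof of Theorem \ref{thm:generic}. That Claim uses condition ii) of Lemma \ref{lemma:WPgen}, which Lemma \ref{lemma:WPgen} derives from the absence of weakly asymptotic pairs together with the genericity hypothesis just established.

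Alternatively, one can bypass Theorem \ref{mainC} altogether. Lemma \ref{lemma:WP} directly says that any tuple whose coordinates lie in distinct $G$-orbits is generic for $\mu^{\times N}$. Since $\mu$ gives positive measure to open sets by minimality and unique ergodicity, so does $\mu^{\times N}$ on $X^{\times N}$, and the orbit of the tuple is therefore dense.

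There is no real obstacle here. The only care needed is matching the ``center'' formulation to the ``orbit'' formulation, which is immediate for abelian $G$.
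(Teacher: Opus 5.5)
Your main argument is the paper's: Lemma \ref{lemma:genWP} supplies the one hypothesis of Theorem \ref{mainC} missing from Theorem \ref{mainD}, Proposition \ref{prop:topmsj} then gives TMSJ of all orders, and both of your consistency checks ($Z(G)=G$, and condition ii) of Lemma \ref{lemma:WPgen} feeding the Claim) are accurate. Your alternative via Lemma \ref{lemma:WP} plus full support of $\mu^{\times N}$ is also correct and slightly more direct, since Lemma \ref{lemma:genWP} is itself derived from Lemma \ref{lemma:WP}; it also uses neither abelianness nor mixing.
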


However, having topological minimal self-joinings of all orders is a very strong property -- in particular, by a recent result of Bitar, Donoso and Petite, for actions of finitely generated abelian groups it holds only in trivial systems.
\begin{thm}\cite[Theorem D]{BitarDonosoPetite2026}\label{prop:king} Let $G$ be a finitely generated abelian group of rank $d$. No infinite system $(X,G)$ has $(2d+2)$-fold topological minimal self-joinings. 
\end{thm}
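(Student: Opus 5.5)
Since this is \cite[Theorem D]{BitarDonosoPetite2026}, the plan is to adapt King's argument for $\Z$-actions \cite{King_1990} to $G\cong\Z^d\times F$ with $F$ finite; the finite part can be absorbed by passing to a finite-index free subgroup. The strategy is to assume $(X,G)$ is infinite with $(2d+2)$-fold TMSJ and derive a contradiction. First I will reduce to the minimal case. Any point $x$ has some $y$ in a different orbit, because $X$ is infinite and compact and orbits are countable. Then $2$-fold TMSJ makes the orbit of $(x,y)$ dense in $X\times X$, so the orbit of $x$ is dense in $X$. Hence $(X,G)$ is minimal, and $X$ is in fact a perfect, uncountable compact space.

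The core of the argument is a \emph{swap-and-compare} construction. Fix $2d+2$ points in pairwise distinct orbits, grouped as $d+1$ pairs $(x_j,y_j)$, $j=0,\dots,d$. Density of the orbit of $(x_0,y_0,\dots,x_d,y_d)$ in $X^{\times(2d+2)}$ lets me choose, for any targets, an element $g\in G$ moving each coordinate close to its prescribed target. I will prescribe targets that force relations between the points: for instance $g x_j\approx y_j$ and $g y_j\approx x_j$ simultaneously. This yields, for each $\eps>0$, elements $g_\eps\in G$ that nearly exchange each pair.

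The next step is the pigeonhole/lattice step that uses finite generation. Take the $d+1$ ``near-exchange'' elements $g^{(0)},\dots,g^{(d)}$ obtained with different pairings of the coordinates. Since $\Z^d$ has rank $d$, these $d+1$ vectors satisfy a nontrivial integer relation $\sum_j c_j g^{(j)}=0$. Composing the near-exchanges according to this relation should produce a point which returns $\eps$-close to a distinct point of a different orbit. I then want to combine this with the $2$-fold density of a suitably chosen pair to contradict it. Concretely, the expected contradiction is that the orbit closure of some $(2d+2)$-tuple is forced to lie in a proper closed subset of $X^{\times(2d+2)}$ defined by the relation, for example a set of the form $\{(z_i):\dist(z_{i},z_{i'})\le\delta\text{ whenever }\ldots\}$.

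The main obstacle is controlling the error accumulation in this composition. The integer coefficients $c_j$ can be large and depend on $\eps$ through the $g^{(j)}$, so there is no uniform equicontinuity to propagate closeness through iterated applications. My first attempt will be to choose the targets inductively: pick $g^{(j+1)}$ only after $g^{(0)},\dots,g^{(j)}$ are fixed, with its tolerance taken from the uniform continuity modulus of the finitely many maps already in play. I would normalize so that $|c_j|$ is bounded by a constant depending only on $d$, using a Siegel-type choice of the relation. If this fails, I would instead follow the enveloping-semigroup route: take idempotent limits of the $g_\eps$ in $\beta G$, which turns the near-exchanges into exact ones, and run the rank argument in the abelian semigroup generated by these limits.
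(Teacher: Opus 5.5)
The paper does not prove this statement; it is quoted from \cite[Theorem D]{BitarDonosoPetite2026}. Your outline therefore has to stand on its own, and it has a genuine gap at its core. Your reduction to the minimal, perfect case is fine (to get a second orbit, use Baire rather than countability of orbits, since $X$ could a priori be countable). The problem is that near-exchanges carry no information beyond TMSJ itself. Density of the orbit of the $(2d+2)$-tuple says exactly that every target is realized up to $\eps$ by some $g$. Finitely many such elements $g^{(0)},\dots,g^{(d)}$ can never confine the orbit closure to a proper closed set, and your last step, where that set is left unspecified, is precisely where the argument is missing.

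The only bridge you offer is composing near-exchanges along a relation $\sum_j c_jg^{(j)}=0$, and this is not a bookkeeping issue. A coefficient with $|c_j|\ge 2$ requires applying $g^{(j)}$ to a point known only up to a tolerance fixed before $g^{(j)}$ was chosen, so choosing tolerances inductively does not help. Such coefficients cannot be avoided: there is no bound on the $c_j$ depending only on $d$. For $d=1$ and coprime $p,q$, every relation $c_0p+c_1q=0$ has $(c_0,c_1)\in\Z(q,-p)$.

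More decisively, your argument uses finite generation only through the existence of integer relations, and these are even more abundant in torsion groups. In $G=\bigoplus_n\Z/2\Z$ every element is an involution. Yet the $(C,F)$-systems of Section~\ref{sec:dani} are infinite and have TMSJ of all orders (Corollary~\ref{cor:topmsj}). There, for $x,y,z$ in distinct orbits with $x,z$ far apart, $3$-fold TMSJ gives $g$ with $gx\approx y$ and $gy\approx z$, while $g(gx)=x$. So the step $g(gx)\approx gy$ fails outright.

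The enveloping-semigroup fallback breaks for the same reason. Exact exchanges $p\in\beta G$ exist in these examples too, and $\beta G$ is not commutative. The relation among the approximating $g^{(j)}$, which escape to infinity as $\eps\to 0$, does not pass to their limits in $\beta G\setminus G$. If exact exchanges did commute, two non-commuting permutations of three points would already rule out $3$-fold TMSJ for every abelian group, which is false.

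What is missing is a constraint valid on a whole one-sided region of $G$, not at finitely many elements. For $G=\Z$ acting by a homeomorphism $T$, such a constraint exists because no homeomorphism of an infinite compact metric space is positively expansive. So for every $\eps>0$ there are $x\neq y$ with $\dist(T^nx,T^ny)\le\eps$ for all $n\ge 0$, and $u\ne v$ with $\dist(T^nu,T^nv)\le\eps$ for all $n\le 0$. If these four points lie in pairwise distinct orbits, the orbit of $(x,y,u,v)$ stays in the proper closed set $\{(a_1,a_2,a_3,a_4):\dist(a_1,a_2)\le\eps\text{ or }\dist(a_3,a_4)\le\eps\}$, contradicting $4$-fold TMSJ.

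The natural rank-$d$ analogue pins each of $d+1$ pairs together along a half-space of $\Z^d$ on which the action is not expansive, with the $d+1$ half-spaces covering $\Z^d$. This covering, not a linear relation among $d+1$ vectors, is what the count $2d+2=2(d+1)$ naturally reflects. The substantive work then lies in producing enough non-expansive half-spaces and in arranging the $2d+2$ points to lie in pairwise distinct orbits. Your outline addresses none of this.
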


\begin{proof}[Proof of Theorem \ref{main:Z}]
Theorem \ref{prop:king} and Corollary \ref{cor:topmsj} imply that the conditions of Theorem \ref{mainD} are unachievable in this setting.
\end{proof}

\section{Some horocycle flows are IC-rigid}\label{sec:hf}

The goal of this section is to prove Theorem \ref{mainB}, hence providing examples of IC-rigid $\R$-actions. We do so by showing that the system satisfies the hypotheses of Theorem \ref{thm:generic}. First, we recall some classical results.

\begin{prop}\cite{Hedlund1936,Furstenberg1973} \label{prop:hf1}
    Let $\Gamma$ be a cocompact lattice in $\PSL_2(\mathbb{R})$, let $\mu$ denote the Haar measure on $X=\PSL_2(\mathbb{R})/\Gamma$, and let $h_t$ denote the horocycle flow in $X$. Then $(X,h_t,\mu)$ is minimal and uniquely ergodic.
\end{prop}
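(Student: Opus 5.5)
Proposition \ref{prop:hf1} is classical: minimality is Hedlund's theorem and unique ergodicity is Furstenberg's. The plan below follows the standard route.

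\emph{Setup.} Write $G=\PSL_2(\mathbb{R})$ with the usual one-parameter subgroups: $u_t$ (horocycle), $a_s$ (geodesic, diagonal) and $v_t$ (opposite horocycle). The horocycle flow is $h_t(g\Gamma)=u_tg\Gamma$ acting on $X=G/\Gamma$. Two facts will be used throughout.
\begin{itemize}
\item[(i)] The renormalization $a_s u_t a_{-s}=u_{e^{s}t}$, with the sign convention fixed so that $a_s$ expands the $u$-direction. It follows that $a_s$ maps $u$-orbits to $u$-orbits.
\item[(ii)] Since $\Gamma$ is cocompact, there is a uniform injectivity radius $r>0$. Hence every point has a product neighborhood of the form $\{v_\cdot a_\cdot u_\cdot x\}$ with coordinates in $(-r,r)$.
\end{itemize}

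\emph{Unique ergodicity (Furstenberg's argument).} I would show that every continuous $f$ has horocycle averages $\frac1T\int_0^T f(u_tx)\,dt$ converging to $\int f\,d\mu$ uniformly in $x$.
\begin{itemize}
\item[1.] Thicken a long horocycle segment $\{u_tx\}_{0\le t\le T}$ by applying $v_{\cdot}a_{\cdot}$ with parameters in $(-\delta,\delta)$. This gives a set $B$ of Haar measure comparable to $T\delta^2$. Uniform continuity of $f$ then makes the horocycle average of $f$ close to $\frac{1}{\mu(B)}\int_B f\,d\mu$ shifted appropriately.
\item[2.] Transfer this to geodesic translates using (i): a horocycle segment of length $T=e^{s}$ is, up to a small error, $a_s$ applied to a unit segment. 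So it suffices to show that $a_s$-translates of a fixed small box $B_0$ through $x$ equidistribute, uniformly in $x$.
\item[3.] Prove this equidistribution from mixing of $a_s$, which holds by Howe--Moore since $\Gamma$ is a lattice in a simple group. The box $B_0$ is a product in the stable $v$, neutral $a$ and unstable $u$ directions. Under $a_s$ the unstable direction stretches while the others contract, so $a_sB_0$ is approximately a long unstable plaque, and $\int f\,d(a_s)_*\mathbf 1_{B_0}$ is then close to the horocycle average of $f$ over that plaque.
\item[4.] The uniformity in $x$ comes from compactness together with the uniform radius in (ii). Concretely, I would cover $X$ by finitely many boxes and use Howe--Moore for each, or equivalently the uniform decay of matrix coefficients for smooth vectors.
\end{itemize}
Uniform convergence for every continuous $f$ is equivalent to unique ergodicity.

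\emph{Minimality.} This follows from unique ergodicity together with full support. Given any $x\in X$, its $u$-orbit closure $Y$ is a closed $h_t$-invariant set, so by Krylov--Bogolyubov it supports an invariant probability measure. By uniqueness that measure is $\mu$. Since Haar measure has full support, $\operatorname{supp}\mu=X\subset Y$. Hence every orbit is dense, which is exactly minimality in the sense of the paper's definition.

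\emph{Main obstacle.} The difficulty is the uniformity in the base point in step 4. Mixing only gives convergence for fixed boxes, so I would need the error terms to be controlled uniformly over a finite cover of $X$. Cocompactness is essential here, because it provides a uniform box size; in the noncocompact case there are periodic horocycles and unique ergodicity fails. I would handle this with uniform continuity of $f$ and a partition-of-unity argument over the cover.
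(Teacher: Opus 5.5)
Your argument is correct, but it differs from the paper's route. The paper gives no proof at all; it simply cites Hedlund for minimality and Furstenberg for unique ergodicity.

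You instead give a single self-contained dynamical proof. Unique ergodicity comes from mixing of the geodesic flow (Howe--Moore) plus thickening in $v$--$a$--$u$ coordinates. Your key observation is that the relevant base point $a_{-s}y$ moves with $s$, which forces uniformity over base points, and compactness supplies it. Minimality then follows from unique ergodicity and full support of Haar measure, so Hedlund's theorem becomes a corollary rather than a separate geometric argument.

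Strictly speaking, this is the Margulis-style mixing/thickening proof rather than Furstenberg's original argument, which is harmonic-analytic; this does not affect correctness. Your route costs Howe--Moore and some care with uniformity. In return it proves more than the statement: uniform equidistribution of every long horocycle segment, with rates if one uses effective decay of matrix coefficients.

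Two details need tidying.

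\begin{enumerate}
\item In step 1, the image of a thickened segment of length $T$ cannot have Haar measure comparable to $T\delta^2$ once $T\delta^2>1$, because the pieces overlap. There are two ways to fix this.
\begin{itemize}
\item Work throughout with the push-forward to $X$ of normalized Haar measure on the box in $G$, which has a bounded density $\phi_x$ with respect to $\mu$, instead of the indicator of the image set.
\item Alternatively, take the $u$-length $\eta$ of $B_0$ below the injectivity radius and set $T=\eta e^{s}$.
\end{itemize}
\item The neutral $a$-direction is not contracted by $a_s$; it stays of size $\delta$. So you must send $s\to\infty$ first, uniformly in $x$, and only then let $\delta\to 0$.
\begin{itemize}
\item Uniformity in $x$ holds because $x\mapsto\phi_x$ is continuous into $L^1(\mu)$ and $X$ is compact, so finitely many boxes suffice.
\item The limit $\delta\to0$ uses uniform continuity of $f$, together with the fact that, for the right-invariant metric, left multiplication by elements near the identity moves all points uniformly little.
\end{itemize}
\end{enumerate}
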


\begin{prop}\cite{Parasyuk1953,Marcus1978} \label{prop:hf2}
    Let $\Gamma$ be a lattice in $\PSL_2(\mathbb{R})$, let $\mu$ denote the Haar measure on $X=\PSL_2(\mathbb{R})/\Gamma$, and let $h_t$ denote the horocycle flow in $X$. Then $(X,h_t,\mu)$ is mixing of all orders.
\end{prop}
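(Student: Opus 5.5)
This is a classical result, cited from \cite{Parasyuk1953,Marcus1978}. I would prove it in two stages: first $2$-fold mixing, then an upgrade to all orders through a joining-rigidity argument.

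For $2$-fold mixing I would use representation theory. Let $\pi$ be the Koopman representation of $\PSL_2(\R)$ on $L^2_0(X,\mu)$. Since $\mu$ is ergodic for the full group action, $\pi$ has no nonzero invariant vectors. The Howe--Moore theorem then says that matrix coefficients $\langle \pi(g)f_1,f_2\rangle$ tend to $0$ as $g\to\infty$ in $\PSL_2(\R)$. Taking $g=h_t$ with $|t|\to\infty$ gives $\mu(A\cap h_tB)\to\mu(A)\mu(B)$. Alternatively one can follow Parasyuk: write $h_t$ as a conjugate of $h_1$ by the geodesic flow, $a_{-\log t}h_1a_{\log t}$ up to normalisation, and use mixing of the geodesic flow together with a thickening argument.

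For higher orders, fix $N\ge 2$ and times $t^n_0,\dots,t^n_N$ with all pairwise differences tending to infinity. Let $\lambda$ be any weak$^*$ limit point of
\[
\bigl(h_{t^n_0}\times\cdots\times h_{t^n_N}\bigr)_*\,{}^{N+1}\Delta .
\]
Then $\lambda$ is a self-joining invariant under the diagonal flow $h_t^{\times(N+1)}$. By $2$-fold mixing, every two-coordinate marginal of $\lambda$ is $\mu\times\mu$. It suffices to show $\lambda=\mu^{\times(N+1)}$, since every subsequence then has the product as its limit.

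The key step, and the main obstacle, is classifying such $\lambda$. I would use Ratner's measure classification (or Ratner's joining theorem for horocycle flows, which is what Marcus's argument effectively anticipates). The diagonal horocycle $\{(h_t,\dots,h_t)\}$ is a one-parameter unipotent subgroup of $\PSL_2(\R)^{N+1}$. So each ergodic component of $\lambda$ is the homogeneous measure on a closed orbit $Hx$ of some closed subgroup $H\le \PSL_2(\R)^{N+1}$ containing the diagonal unipotent.

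Because every coordinate projection of $\lambda$ is $\mu$, and this marginal condition passes to almost every ergodic component, the projections of $H$ to the factors must be surjective. By Goursat-type reasoning for products of copies of the simple group $\PSL_2(\R)$, $H$ is then a product over blocks of graphs of isomorphisms between factors. Any block containing two or more coordinates would force the corresponding pairwise marginal to be supported on a graph-type set (a finite extension of an off-diagonal), contradicting the fact that it equals $\mu\times\mu$. Hence $H=\PSL_2(\R)^{N+1}$ for almost every component, and $\lambda=\mu^{\times(N+1)}$.

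If I wished to avoid the full Ratner theorem, I would instead reproduce Marcus's original polynomial-divergence (shearing) argument. That argument shows $\lambda$ is invariant under extra coordinatewise unipotent or geodesic directions, using the compatibility of $h_t$ with the geodesic flow. Verifying this extra invariance is the delicate part of that route.
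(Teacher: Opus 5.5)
The paper gives no proof of Proposition~\ref{prop:hf2}. It quotes the classical results of Parasyuk (mixing) and Marcus (mixing of all orders) from \cite{Parasyuk1953,Marcus1978}; Marcus's argument is specific to horocycle flows and predates measure rigidity.

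Your route is a correct and genuinely different proof. You use Howe--Moore for $2$-fold mixing. You then identify an arbitrary limit joining $\lambda$ through Ratner's measure classification on $\PSL_2(\R)^{N+1}/\Gamma^{N+1}$ and a Goursat lemma for subalgebras of $\mathfrak{sl}_2(\R)^{N+1}$ that surject onto every factor. This costs a theorem far deeper than the statement. In return it treats all lattices uniformly, it is conceptually transparent, and it follows the usual template for obtaining multiple mixing of mixing unipotent flows on homogeneous spaces. It also fits the paper's own reliance on Ratner's theory in Propositions~\ref{prop:hf3} and~\ref{prop:gen}.

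A few steps should be made explicit.

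\emph{Excluding non-singleton blocks.} This is the one step that needs an extra idea. You know the $(i,j)$-marginal of $\lambda$ is $\mu\times\mu$. But a block containing $i\neq j$ only constrains the $(i,j)$-marginal of a \emph{component}, and an absolutely continuous measure can be an average of singular ones (Lebesgue measure on a square is an average of measures on horizontal segments). What closes the gap is that $\mu\times\mu$ is ergodic for $h_t\times h_t$, since mixing implies weak mixing. By extremality, exactly as you argue for the one-dimensional marginals using ergodicity of $\mu$, almost every component then has $(i,j)$-marginal $\mu\times\mu$.

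\emph{Dimension counts.} Let $\mathfrak h$ be the Lie algebra of $H$. If $p_i(\mathfrak h)\neq\mathfrak{sl}_2(\R)$, the $i$-th marginal of the component is carried by countably many orbits of a connected group of dimension at most $2$ in the $3$-dimensional $X$. A block containing $i\neq j$ puts the $(i,j)$-marginal on countably many orbits of a $3$-dimensional graph subgroup in the $6$-dimensional $X\times X$. Both sets are Haar-null, which gives the surjectivity of the projections and the contradiction you want.

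\emph{Non-uniform $\Gamma$.} The joinings form a tight family because all their marginals equal $\mu$, so limit points are still probability measures. You then pass from continuous functions to Borel sets by $L^1(\mu)$-approximation, which is uniform in the times.
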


\begin{prop}\cite{Ratner1983} \label{prop:hf3}
    Let $\Gamma$ be a maximal nonarithmetic lattice in $\PSL_2(\mathbb{R})$, let $\mu$ denote the Haar measure on $X=\PSL_2(\mathbb{R})/\Gamma$, and let $h_t$ denote the horocycle flow in $X$. Then $(X,h_t,\mu)$ has minimal self-joinings of all orders.
\end{prop}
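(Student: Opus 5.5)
Since this proposition is quoted from \cite{Ratner1983}, I would rebuild it from Ratner's rigidity theory for horocycle flows. The proof would go by induction on the order $N$: first classify the ergodic $2$-fold self-joinings, then show that pairwise information determines higher-order joinings. Throughout, $h_t$ is the $\R$-action $t\mapsto u_t$ by left multiplication on $X=\PSL_2(\R)/\Gamma$. I will use that $\R$ is abelian, so $Z(G)=\R$, and an off-diagonal joining is $\prescript{N}{}{\Delta}^{(t_1,\dots,t_N)}$ with $t_i\in\R$.

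\textbf{Step 1 ($2$-fold MSJ).} Let $\lambda\in\Joi_2(\R,\mu)$ be ergodic and not equal to $\mu\times\mu$. I would invoke Ratner's H-property (the slow polynomial divergence of nearby horocycle orbits along the geodesic direction). Ratner's rigidity argument uses it to show that any ergodic joining other than the product is supported on the graph of a finite-to-one correspondence. Ratner's theorem on measurable factors and isomorphisms of horocycle flows then makes this correspondence algebraic: $\lambda$ is supported on a set of the form
\[\{(g\Gamma,\,u_s g c\,\Gamma)\},\qquad c\in\mathrm{Comm}(\Gamma),\ s\in\R.\]
For a maximal nonarithmetic lattice, Margulis' commensurator criterion gives $\mathrm{Comm}(\Gamma)=\Gamma$. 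Hence $c\in\Gamma$ and the correspondence collapses to $x\mapsto h_s x$, i.e.\ $\lambda=\prescript{2}{}{\Delta}^{(0,s)}$. I expect this step to be the main obstacle. I would not reprove it: I would cite the measurable rigidity from \cite{Ratner1983} and supply only the commensurator reduction.

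\textbf{Step 2 (reduction for $N\ge 3$).} Let $\lambda\in\Joi_N(\R,\mu)$ be ergodic.
\begin{itemize}
\item If some two-coordinate marginal $(\pi_{i,j})_*\lambda$ is off-diagonal, say $\prescript{2}{}{\Delta}^{(0,s)}$, then $x_j=h_s x_i$ holds $\lambda$-almost surely. Dropping coordinate $j$ gives an ergodic $(N-1)$-fold joining, to which the induction hypothesis applies.
\item Re-inserting $x_j=h_sx_i$ turns the resulting trivial $(N-1)$-fold joining back into a trivial $N$-fold joining, because off-diagonal blocks absorb the extra coordinate.
\end{itemize}

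\textbf{Step 3 (the pairwise independent case).} By Step 1, the only remaining case is that every pair marginal is $\mu\times\mu$, i.e.\ $\lambda$ is pairwise independent. I would show $\lambda=\mu^{\times N}$ using Ratner's H-property once more, in the form of the Glasner--Host--Rudolph or del Junco--Rudolph principle: for systems with this property, pairwise independent ergodic joinings are independent.

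The argument runs as follows. Suppose $\lambda$ is not the product. Then, as in Step 1, the H-property forces $\lambda$ to be a finite extension of its projection onto the first $N-1$ coordinates. By induction that projection equals $\mu^{\times(N-1)}$. A finite extension of this product that is pairwise independent of each factor would yield a nontrivial compact factor. This contradicts mixing (Proposition~\ref{prop:hf2}), so $\lambda=\mu^{\times N}$.

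If this step proved delicate, I would try a fallback. Apply Ratner's measure classification to the diagonal unipotent action on $\PSL_2(\R)^N/\Gamma^N$. It shows $\lambda$ is algebraic, i.e.\ the Haar measure on a closed orbit of a subgroup $H$. Goursat's lemma together with $\mathrm{Comm}(\Gamma)=\Gamma$ forces $H$ to be a product of twisted diagonals, which again yields a trivial joining.
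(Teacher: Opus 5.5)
The paper gives no proof of this proposition; it only cites \cite{Ratner1983}. So the question is whether your reconstruction stands on its own. Steps~1 and~2 are fine as reductions. Step~1 is Ratner's classification of non-product joinings of horocycle flows, combined with Margulis' theorem that a maximal nonarithmetic $\Gamma$ is its own commensurator. Step~2 is bookkeeping. The first half of Step~3 is also fine: Ratner's joining theorem, for an H-property flow against an arbitrary ergodic flow, does show that a non-product $\lambda$ is a finite extension of $\mu^{\times(N-1)}$.

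The gap is the next inference, and it carries the whole weight of ``all orders.'' You claim that a finite extension of $\mu^{\times(N-1)}$ that is pairwise independent of each factor yields a nontrivial compact factor, contradicting mixing. This is false. A finite (relatively compact) extension of the weakly mixing system $(X^{\times(N-1)},\mu^{\times(N-1)})$ produces no eigenfunctions of $X$. Here is a counterexample. Take the Bernoulli shift on $\{0,1\}^{\Z}$ with the $(\frac12,\frac12)$ product measure. Let $x,y$ be independent and let $\oplus$ be coordinatewise addition mod~$2$. The law of $(x,y,x\oplus y)$ is then an ergodic, pairwise independent, non-product $3$-fold self-joining in which each coordinate is a function of the other two. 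Yet this system is mixing of all orders and has no compact factor. So mixing (Proposition~\ref{prop:hf2}) cannot supply the contradiction. What is missing is a genuine ``pairwise independent implies independent'' statement for horocycle flows. You would have to quote or prove such a statement, and the argument you give in its place does not establish it.

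Your fallback is what actually closes the gap, and it makes Steps~1--3 unnecessary. Apply Ratner's measure classification (the same theorem behind Proposition~\ref{prop:gen}) to the diagonal unipotent flow $\{(u_t,\dots,u_t)\}$ on $\PSL_2(\R)^N/\Gamma^N$. It shows that an ergodic $\lambda\in\Joi_N(\R,\mu)$ is the invariant measure on a closed orbit $Hx$. Since the $H^\circ$-orbits in $Hx$ are $U$-invariant and countable in number, ergodicity lets you take $H$ connected. To finish, you need four points:
(i) the Haar marginals force every coordinate projection of $H$ to be all of $\PSL_2(\R)$, since a lower-dimensional projection would put that marginal on a Haar-null set;
(ii) since $\mathfrak{sl}_2(\R)$ is simple, $H$ is a product, over a partition of the coordinates, of graphs of automorphisms;
(iii) these automorphisms fix every $u_t$, so they are conjugations by elements $u_{s_k}$;
(iv) $H\cap\mathrm{Stab}(x)$ is a lattice in $H$, so after normalizing the base point, each base-point component $c_k$ satisfies $u_{-s_k}c_k\in\mathrm{Comm}(\Gamma)=\Gamma$.
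From (iv), each block orbit is the graph of $y\mapsto(h_{s_k}y)_k$, and $\lambda$ is a product of off-diagonal joinings. Written out, this is a complete proof of all orders at once. It goes through the 1991 measure classification rather than Ratner's original 1983 H-property argument that the paper cites.
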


\begin{prop}\cite{Ratner1991} \label{prop:gen}
     Let $G$ be a connected Lie group, $U$ a one-parameter unipotent subgroup of $G$, $\Gamma$ a lattice in $G$, and $x\in G/\Gamma$. Then $x$ is generic for the unique $U$-invariant ergodic probability measure supported on the homogeneous orbit closure $\overline{Ux}$.
\end{prop}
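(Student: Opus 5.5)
This is Ratner's uniform distribution theorem, and I would not reprove it inside the paper. I would derive it, as in the literature, from two other deep inputs: Ratner's measure classification theorem, and the nondivergence and linearization technique of Dani and Margulis. Fix $x\in G/\Gamma$ and the orbit averages
\[
\mu_T:=\frac1T\int_0^T\delta_{u_tx}\,dt .
\]
The goal is to show that every weak$^*$ limit of $(\mu_T)$ equals the homogeneous measure $\lambda$ on $\overline{Ux}$.

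The steps, in order, are as follows.

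\emph{Step 1 (no escape of mass).} When $G/\Gamma$ is noncompact, I would first show that every weak$^*$ limit $\mu_\infty$ is a probability measure. The tool is the Dani--Margulis quantitative nondivergence for unipotent orbits. It rests on the polynomial behaviour of $t\mapsto \|u_t g v\|$ for $v$ in the exterior powers of the Lie algebra. In the situation of Theorem \ref{mainB}, $\Gamma$ is cocompact, so this step is vacuous. Since $\mu_\infty$ is $U$-invariant, it decomposes into ergodic $U$-invariant measures.

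\emph{Step 2 (measure classification).} By Ratner's measure classification, each ergodic component of $\mu_\infty$ is algebraic, i.e. the $H$-invariant probability measure on a closed orbit $Hy$. Here $H$ is a closed subgroup containing $U$ with $H\cap y\Gamma y^{-1}$ a lattice in $H$. There are only countably many such classes of $H$ up to conjugation. Ratner's orbit closure theorem then tells us that $\overline{Ux}=Lx$ is homogeneous and carries $\lambda$, the $L$-invariant measure, where $L$ is the minimal such subgroup for $x$.

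\emph{Step 3 (linearization), the main obstacle.} It remains to show that $\mu_\infty$ gives zero mass to the tubes
\[
X(H,U)=\{g\Gamma: Ug\subset gH\}
\]
for $H$ strictly smaller than $L$ in the relevant sense. Once this holds, the decomposition in Step 2 forces $\mu_\infty=\lambda$. I would use the linearization method: represent $X(H,U)$ via a vector $p_H$ in a suitable representation $V=\bigwedge^{\dim H}\mathfrak g$. The set $\{g: u\,g p_H=g p_H\ \text{for all } u\in U\}$ is then an algebraic variety $A_H$. A positive-mass limit would force the orbit to spend a positive proportion of time near a compact piece of $A_H\Gamma$. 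Polynomial divergence of $t\mapsto u_t g p_H$ then implies that the orbit stays near $A_H\gamma$ for a single $\gamma$, because of the $(C,\alpha)$-good property of polynomials. From this one concludes $x\in X(H,U)$ itself, contradicting the minimality of $L$ for $x$.

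The delicate point in Step 3 is controlling the self-intersections of the tubes, i.e. ruling out that the orbit wanders between neighbourhoods of different translates $A_H\gamma$. This is where the discreteness of $\Gamma p_H$ and the polynomial estimates are essential, and it is the step I expect to require the most care.
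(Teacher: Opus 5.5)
The paper gives no proof of this statement. It is quoted from Ratner and used as a black box in Propositions \ref{prop:hf4} and \ref{prop:hf5}. So your decision not to reprove it matches the paper, and your outline (nondivergence, measure classification with countability of $\mathcal H$, linearization) is the usual derivation of the theorem. If you ever spell the outline out, two points need repair.

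First, Step~2 takes $L$ from Ratner's orbit closure theorem. In the standard development (Ratner's included), the orbit closure theorem for one-parameter unipotent $U$ is obtained together with, or from, the equidistribution statement you are proving. Using it as an input is therefore circular.

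It is also unnecessary. Choose $H\in\mathcal H$ of minimal dimension with $\mu_\infty(\pi(X(H,U)))>0$. Here $X(H,U)=\{g\in G: g^{-1}Ug\subset H\}$ must be a subset of $G$: the condition is not invariant under $g\mapsto g\gamma$, so $\{g\Gamma: Ug\subset gH\}$ is not well defined. Then:
\begin{itemize}
\item Countability of $\mathcal H$ together with minimality of $\dim H$ gives $\mu_\infty(\pi(S(H,U)))=0$.
\item Linearization then yields $g\gamma\in X(H,U)$ for some $\gamma\in\Gamma$, where $x=g\Gamma$. Hence $Ux\subset g\gamma H\Gamma/\Gamma$, which is a closed finite-volume orbit.
\item Minimality of $\dim H$ forces every ergodic component of $\mu_\infty$ to be the homogeneous measure on this orbit.
\item This orbit contains $Ux$ and is the support of $\mu_\infty$, so it equals $\overline{Ux}$. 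Therefore all limit points agree, and homogeneity of $\overline{Ux}$ comes out as a byproduct rather than an assumption.
\end{itemize}

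Second, your $A_H$ is the wrong set. The set $\{g: u\,gp_H=gp_H \text{ for all } u\in U\}$ equals $\{g: g^{-1}Ug\subset N_G(H)\}$, which is in general strictly larger than $X(H,U)$. The condition that actually cuts out $X(H,U)$ is the linear one: $w\wedge gp_H=0$ for all $w\in\mathrm{Lie}(U)$, that is, $\mathrm{Lie}(U)\subset\mathrm{Ad}(g)\mathfrak h$. The polynomial-divergence and discreteness argument must be run toward that subspace. With your $A_H$ it would only give $(g\gamma)^{-1}Ug\gamma\subset N_G(H)$, which does not place $Ux$ inside a closed $H$-orbit.
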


In particular, we obtain the following two corollaries.
\begin{prop} \label{prop:hf4}
    Let $\tilde \Gamma$ be a lattice in $\PSL_2(\mathbb{R})$, let $\mu$ denote the Haar measure on $X=\PSL_2(\mathbb{R})/\Gamma$, and let $h_t$ denote the horocycle flow in $X$. Then every point $(x_1,\cdots,x_N)\in X^{\times N}$ is generic for an $(X^{\times N},\{h_t^{\times N}\}_{t\in \R})$-ergodic measure.
\end{prop}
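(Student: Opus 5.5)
The plan is to realize the $N$-fold product flow as a unipotent flow on a homogeneous space and then apply Ratner's equidistribution theorem, Proposition \ref{prop:gen}. Set $\tilde G=\PSL_2(\R)^{\times N}$ and $\tilde\Gamma=\Gamma^{\times N}$. Then $\tilde G$ is a connected Lie group and $\tilde\Gamma$ is a lattice in it, since a product of lattices is a lattice: the quotient $\tilde G/\tilde\Gamma\cong X^{\times N}$ carries the finite product Haar measure. Under this identification, the diagonal flow $h_t^{\times N}$ is left multiplication by
\[
u_t=(u^{(1)}_t,\ldots,u^{(1)}_t)\in\tilde G,
\]
where $u^{(1)}_t=\begin{pmatrix}1&t\\0&1\end{pmatrix}$ generates the horocycle flow in each factor. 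Each coordinate of $u_t$ is unipotent, so $\mathrm{Ad}(u_t)$ is unipotent on $\mathfrak{sl}_2(\R)^{\oplus N}$. Hence $U=\{u_t\}_{t\in\R}$ is a one-parameter unipotent subgroup of $\tilde G$.

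Now fix $p=(x_1,\ldots,x_N)\in X^{\times N}$ and regard it as a point of $\tilde G/\tilde\Gamma$. Proposition \ref{prop:gen} then says that $p$ is generic for the unique $U$-invariant ergodic probability measure $\nu_p$ supported on the homogeneous orbit closure $\overline{Up}$. The measure $\nu_p$ is ergodic for $U$ by the statement itself, and $U$ acting on $\tilde G/\tilde\Gamma$ is exactly $\{h_t^{\times N}\}_{t\in\R}$. So $\nu_p$ is an $(X^{\times N},\{h_t^{\times N}\}_{t\in\R})$-ergodic measure and $p$ is generic for it, which is the claim.

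Genericity should be read along the Følner sequence used in the paper. For $\R$-flows the natural tempered choice is $F_n=[0,n]$, which matches the averages $\frac1T\int_0^T$ in Ratner's theorem. Averaging over symmetric intervals $[-n,n]$ also works, by applying Ratner to the reversed unipotent flow $u_{-t}$.

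The only real check, and the step I expect to be the main obstacle, is that the hypotheses of Ratner's theorem hold in this setting. One must verify that $\tilde\Gamma$ is a lattice in $\tilde G$, which holds because products of lattices are lattices. One must also verify that $U$ is unipotent in the sense Ratner requires, namely $\mathrm{Ad}$-unipotent, which follows from the coordinatewise computation above. Beyond that, everything is bookkeeping of the identification $\tilde G/\tilde\Gamma\cong X^{\times N}$, including matching the left-multiplication convention with the one defining $h_t$. Cocompactness or maximality of $\Gamma$ is not needed here, which is consistent with the statement being for a general lattice.
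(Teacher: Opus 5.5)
Your proposal is correct and is the same argument as the paper's. The paper applies Ratner's theorem (Proposition \ref{prop:gen}) to $\PSL_2(\R)^{\times N}$ with the product lattice and the diagonal one-parameter subgroup $(h_t^{\times N})_{t\in\R}$, while your checks (the product of lattices is a lattice, $\mathrm{Ad}$-unipotence, and the choice of averaging intervals) are details the paper leaves implicit.
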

\begin{proof}
    For every $N\in\Z^+$, apply Proposition \ref{prop:gen} to $G=\PSL_2(\mathbb{R})^{\times N}$, $\Gamma=\tilde\Gamma^{\times N}$, and $U=(h_t^{\times N})_{t\in\R}$.
\end{proof}
\begin{prop} \label{prop:hf5}
Let $\Gamma$ be a lattice in $\PSL_2(\mathbb{R})$, let $\mu$ denote the Haar measure on $X=\PSL_2(\mathbb{R})/\Gamma$, and let $h_t$ denote the horocycle flow in $X$. Then for every $t\in\R$, $(x,y)\in X\times X$ is generic for $(Id\times h_t)_*\mu$ if and only if $y=h_tx$.
\end{prop}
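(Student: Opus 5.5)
The "if" direction is immediate. If $y=h_tx$, then $(x,y)=(\mathrm{Id}\times h_t)(x,x)$. Unique ergodicity (Proposition \ref{prop:hf1}, at least in the cocompact case) makes $x$ generic for $\mu$. Since the orbit of $(x,h_tx)$ under $h_s\times h_s$ is the image of $s\mapsto h_sx$ under the continuous map $z\mapsto (z,h_tz)$, which commutes with the flow, the point $(x,h_tx)$ is generic for $(\mathrm{Id}\times h_t)_*\mu$. For a general lattice, the same conclusion follows from Proposition \ref{prop:gen} applied to $\overline{\{h_sx\}}$, using equidistribution of $x$ with respect to Haar measure.

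The content is the converse. Suppose $(x,y)$ is generic for $\lambda:=(\mathrm{Id}\times h_t)_*\mu$. Replacing $y$ by $h_{-t}y$ (which turns genericity for $\lambda$ into genericity for the diagonal measure $\Delta_*\mu$), we may assume $t=0$, and we must show $y=x$.

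By Proposition \ref{prop:gen}, applied to $G=\PSL_2(\R)^{\times 2}$, $\tilde\Gamma^{\times2}$ and $U=\{(h_s,h_s)\}$, the measure $\Delta_*\mu$ must be the homogeneous measure on the orbit closure $\overline{U(x,y)}$. So $\overline{U(x,y)}$ is supported on the diagonal $\{(z,z)\}$, and being the orbit closure it contains $(x,y)$. Hence $(x,y)$ lies in the support of $\Delta_*\mu$, which is the diagonal, and so $y=x$.

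More explicitly, in Ratner's description $\overline{U(x,y)}=H(x,y)$ for a closed subgroup $H\supseteq U$, and the generic measure is $H$-invariant and supported on $H(x,y)$. Since this measure is $\Delta_*\mu$, its support equals $H(x,y)$ and also equals the diagonal. Therefore $(x,y)$ is in the diagonal.

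The main point to check carefully is that Proposition \ref{prop:gen} asserts the measure is supported on the orbit closure, and that this orbit closure equals the support of the measure. This holds because the measure is the homogeneous measure on $\overline{Ux}$, which has full support there. With that, the argument reduces to the elementary observation that $(x,y)\in\overline{U(x,y)}=\operatorname{supp}\Delta_*\mu=\{(z,z)\}$.

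If one prefers to avoid invoking the full-support statement, a fallback is available. Genericity of $(x,y)$ for $\Delta_*\mu$ forces $d(h_sx,h_sy)\to0$ along a set of density one. One then uses the polynomial divergence of nearby horocycle orbits: $h_s g$ and $h_s g'$ with $g'=gk$ and $k\neq e$ small separate linearly or quadratically in $s$, unless $k$ lies in the centralizer $\{h_r\}$. This gives $y=h_rx$, and genericity for the diagonal then forces $r=0$.
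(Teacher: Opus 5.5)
Your proof of the substantive ``only if'' direction is the paper's own. You apply Proposition \ref{prop:gen} to $U=\{h_s\times h_s\}$ on $\PSL_2(\R)^{\times 2}/\Gamma^{\times 2}$, identify the measure for which $(x,y)$ is generic with the homogeneous measure on $\overline{U(x,y)}$, and use its full support on that orbit closure to get $(x,y)\in\mathrm{Supp}\bigl((Id\times h_t)_*\mu\bigr)=\{(z,h_tz):z\in X\}$. The reduction to $t=0$ via $Id\times h_{-t}$ is harmless, and this direction holds for every lattice.

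What fails is your extension of the ``if'' direction to arbitrary lattices. For non-uniform $\Gamma$ it is not true that every $x$ equidistributes with respect to Haar measure. Each cusp carries closed horocycles; for $\Gamma=\PSL_2(\Z)$, for instance, the identity coset lies on one. Such an $x$ is generic for the normalized length measure $\lambda_x$ on its closed horocycle. Then $(x,h_tx)$ is generic for the push-forward of $\lambda_x$ under $z\mapsto(z,h_tz)$, a measure supported on a closed curve, and not for $(Id\times h_t)_*\mu$. So the ``if'' direction, and with it the proposition as literally stated for all lattices, is false in the non-uniform case, and no argument can supply it.

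The paper's ``Clearly'' is justified exactly by the unique ergodicity you cite from Proposition \ref{prop:hf1}, i.e.\ only for cocompact $\Gamma$, which is the only case needed for Theorem \ref{mainB}. You should delete the general-lattice sentence and state the equivalence for cocompact $\Gamma$, keeping the ``only if'' direction for all lattices.

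Your polynomial-divergence fallback is not needed, and as written it is only heuristic. For the left action on $G/\Gamma$ the relevant displacement is $gkg^{-1}$ rather than $k$. You would also still have to exclude the orbits re-approaching each other through other lifts after they separate.
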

\begin{proof}
Clearly $\{(x,h_tx):x\in X\}\subset\{(x,y)\in X\times X$ generic for $(Id\times h_t)_*\mu\}$, so it is enough to show $\{(x,y)\in X\times X$ generic for $(Id\times h_t)_*\mu\}\subset\{(x,h_tx):x\in X\}$. 

    Apply Proposition \ref{prop:gen} with $G=\PSL_2(\mathbb{R})\times \PSL_2(\mathbb{R})$, $\Gamma\times \Gamma$, and $U=(h_s\times h_s)_{s\in\R}$, and for every $(x,y)\in X\times X$ let $\nu_{x,y}$ be the unique $U$-invariant ergodic probability measure for which $(x,y)$ is generic. Then, if $\nu_{x,y}=(Id\times h_t)_*\mu$,
    \[(x,y)\in\text{Supp}(\nu_{x,y})=\text{Supp}((Id\times h_t)_*\mu)=\{(x,h_tx):x\in X\}.\]
\end{proof}

We are now ready to prove Theorem \ref{mainB}.
\begin{proof}[Proof of Theorem \ref{mainB}]
    By Propositions \ref{prop:hf1}, \ref{prop:hf2}, \ref{prop:hf3}, \ref{prop:hf4}, and \ref{prop:hf5}, the horocycle flow satisfies the hypotheses of Theorem \ref{thm:generic}. Thus, it is IC-rigid.
\end{proof}

Note that, by Proposition \ref{prop:topmsj}, such horocycle flows provide an example of $\R$-actions with topological minimal self-joinings of all orders.

\section{IC-rigid actions of direct sums of finite groups}\label{sec:dani}

\subsection{Properties of $(C, F)$-actions of direct sums of finite groups}

In this subsection we recall the construction and properties of $(C, F)$-actions from \cite{Danilenko_2006}.

Let $(G_i)$ be a sequence of finite groups with $|G_i|\geqslant 2$, and let $G=\bigoplus_{i=1}^{\infty}G_i$. Consider any increasing sequence $(k_n)$ of positive integers, and let
\[
F_n=\bigoplus_{i=1}^{k_n}G_i
\]
with $F_0=\{1_G\}$, and 
\[
H_n=\bigoplus_{i=k_n+1}^{k_{n+1}}G_i,
\]
where to define $H_0$ we take $k_0=0$. The sets $(F_n)$ are naturally identified with subgroups of $G$, where they form a Følner sequence. The groups $H_n$ can be identified with subgroups of $F_{n+1}$ under the map $\varphi_n(h)=(0, h)$.

Now pick a sequence of arbitrary functions $(s_n)$, where $s_n:H_n\to F_n.$ Let $c_{n+1}:H_n\to F_{n+1}$ be given by $c_{n+1}(h)=(s_n(h), h)$. Finally, denote by $C_{n+1}\subset F_{n+1}$ the image of $H_n$ under the map $c_{n+1}$.

We can then define 
\[
X_n=F_n\times C_{n+1}\times C_{n+2}\times \ldots
\]
and endow it with the product topology (making it a Cantor space). We also define homeomorphisms $i_n:X_n\to X_{n+1}$ given by
\[
i_n(f_n, d_{n+1}, d_{n+2}, \ldots )=(f_nd_{n+1}, d_{n+2}, \ldots ).
\]
The space $X_n$ admits a natural action of the group $F_n$: for $g\in F_n$ we let
\[
g.(f_n, d_{n+1}, d_{n+2}, \ldots)=(gf_n, d_{n+1}, d_{n+2}, \ldots).
\]
It is straightforward to see that the actions of $F_n$ on spaces $X_k$ and $X_l$ with $n\leqslant k\leqslant l$ are conjugated by the map
\begin{equation}\label{conj}
i_{l-1}\circ i_{l-2}\circ\ldots \circ i_k,
\end{equation}
which lets us define an action of $F_n$ on the inductive limit $X$ of the sequence $(X_n, i_n)$ (which is canonically homeomorphic to each $X_n$). 

On each $X_n$ we can also consider the product measure, preserved by the action of $F_n$; these measures are also conjugated by the maps (\ref{conj}), so they extend to the limit $X$. Thus, we obtain a topological measure-preserving system $(X, G, \mu)$. We call systems obtained this way \textit{$(C, F)$-systems}.

\begin{thm}\cite[Theorem 2.1]{Danilenko_2001}\label{CFuni}
All $(C, F)$-systems are minimal and uniquely ergodic.
\end{thm}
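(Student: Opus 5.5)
The plan is to work throughout in a convenient model $X_m$ and use the cylinder sets
\[
[f]_m:=\{f\}\times C_{m+1}\times C_{m+2}\times\cdots\subset X_m,\qquad f\in F_m .
\]
Both minimality and unique ergodicity follow from one observation: for each $m$ the cylinders $\{[f]_m\}_{f\in F_m}$ are permuted simply transitively by the subgroup $F_m\subset G$.

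The first step is structural. Since $c_{n+1}(h)=(s_n(h),h)$, the set $C_{n+1}$ contains exactly one element in each coset of $F_n=\bigoplus_{i\le k_n}G_i$ inside $F_{n+1}=F_n\oplus H_n$. Hence $(f,d)\mapsto fd$ is a bijection $F_n\times C_{n+1}\to F_{n+1}$, and each $i_n$ is a homeomorphism, as stated. In particular $|F_{n+1}|=|F_n|\,|C_{n+1}|$. Under the identification $X_m\cong X_{m'}$ for $m'\ge m$, the cylinder $[f]_m$ is the disjoint union of the cylinders $[f d_{m+1}\cdots d_{m'}]_{m'}$ with $d_j\in C_j$. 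Consequently:
\begin{itemize}
\item any two cylinders (of any levels) are either nested or disjoint, so together with $\emptyset$ they form a $\pi$-system;
\item they generate the product topology, since a basic open set of $X_m$ fixing the coordinates $f_m,d_{m+1},\dots,d_{m'}$ is exactly $[f_md_{m+1}\cdots d_{m'}]_{m'}$;
\item each $[f]_m$ is clopen and nonempty.
\end{itemize}

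For minimality, take $y\in X$ and a nonempty open set $U$. Then $U$ contains some cylinder $[f]_m$. Write $y=(f',d_{m+1},\dots)$ in $X_m$ and put $g=f f'^{-1}\in F_m\subset G$. By the definition of the action, $gy=(f,d_{m+1},\dots)\in[f]_m\subset U$. Since the action of $F_m$ on $X$ was defined compatibly through the conjugacies \eqref{conj}, it does not matter in which model $y$ is written. So every orbit is dense.

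For unique ergodicity, let $\nu$ be any $G$-invariant Borel probability measure on $X$. For $f,f'\in F_m$, the element $f'f^{-1}$ maps $[f]_m$ onto $[f']_m$, so all level-$m$ cylinders have equal $\nu$-measure. They partition $X$, hence
\[
\nu([f]_m)=\frac{1}{|F_m|}.
\]
The product measure $\mu$ assigns the same value: it gives $[f]_m$ mass $1/|F_m|$ on the first factor, and the tail factors have total mass $1$. The two descriptions are consistent across levels because $|F_{m+1}|=|F_m||C_{m+1}|$. Thus $\nu$ and $\mu$ agree on a $\pi$-system generating the Borel $\sigma$-algebra, and by Dynkin's $\pi$–$\lambda$ theorem $\nu=\mu$.

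I expect no serious obstacle. The only point needing care is the bookkeeping in the first step, namely that $C_{n+1}$ is a transversal of $F_n$ in $F_{n+1}$. This single fact makes the maps $i_n$ bijective, makes the cylinders nested, and makes the measures consistent across levels. Notably, neither commutativity of the $G_i$ nor any property of the functions $s_n$ is needed.
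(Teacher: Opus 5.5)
Your proof is correct and complete. The paper gives no argument for this statement; it imports it by citation from \cite{Danilenko_2001}. So there is nothing to match step by step. What you supply is a self-contained cylinder argument adapted to the construction as recalled here.

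The one structural fact you isolate is that $C_{n+1}=\{(s_n(h),h):h\in H_n\}$ is a transversal of $F_n$ in $F_{n+1}=F_n\oplus H_n$. This is exactly what makes the situation spacer-free, $F_nC_{n+1}=F_{n+1}$; general $(C,F)$-constructions only require $F_nC_{n+1}\subset F_{n+1}$. It also makes $X$ compact, makes the level-$m$ cylinders a clopen partition permuted simply transitively by $F_m$, and gives $\mu([f]_m)=1/|F_m|$. From there, density of orbits and $\nu=\mu$ via the $\pi$--$\lambda$ theorem follow immediately, as you show.

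Your cylinders $[f]_m$ are the paper's $[\{f\}]_m^{\infty}$ from the proof of Proposition~\ref{CFasy}. Your identity $\nu([f]_m)=1/|F_m|$ for an arbitrary invariant $\nu$ is the analogue of the first Claim there.

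Compared with the citation, your route gains self-containedness. It also justifies your remark that neither commutativity of the $G_i$ nor any property of the $s_n$ is used.

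One small point is worth stating explicitly. Your argument proves uniqueness of the invariant measure. Existence comes from the invariance of $\mu$, which the setup asserts and which is clear because left translation by $F_m$ preserves normalized counting measure on $F_m$.
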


\begin{thm}\cite[Theorem 0.1]{Danilenko_2006}\label{CFmixing}
    For any sequence $(G_i)$, there exist sequences $(k_n)$ and $(s_n)$ making the system $(X, G, \mu)$ mixing of all orders.
\end{thm}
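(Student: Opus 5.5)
The plan is to follow Danilenko's approach and take the cutting parameters $(s_n)$ \emph{random}, with $|H_n|$ growing much faster than $|F_n|$. Mixing of all orders is then a finite combinatorial equidistribution property of the $s_n$, which holds for all but a vanishing proportion of choices of $s_n$ at each stage.

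\emph{Step 1: reduction to cylinders and to the top level.} It suffices to check mixing of all orders on cylinder sets $[f]_n=\{f\}\times C_{n+1}\times C_{n+2}\times\cdots\subset X_n$ with $f\in F_n$. These generate the topology, and each has measure $|F_n|^{-1}$. Take $g_0,\dots,g_r\in G$ with $g_i^{-1}g_j\to\infty$, and let $m$ be minimal with all $g_i^{-1}g_j\in F_{m+1}$. Then $m\to\infty$, and for at least one pair the difference has nonzero $H_m$-component.

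Write a point of $X_{m+1}$ at level $m+1$ as $f_{m+1}=f_m\,c_{m+1}(h')$ with $f_m\in F_m$ and $h'\in H_m$ uniform. An element $g=(a,b)\in F_m\oplus H_m$ acts by
\[
g\cdot f_m c_{m+1}(h')=\bigl(a f_m s_m(h')s_m(b h')^{-1}\bigr)\,c_{m+1}(bh').
\]
This is well defined inside $F_{m+1}$ because $G$ is abelian. So the new $F_m$-coordinate is shifted by the cocycle $s_m(h')s_m(bh')^{-1}$. Whether a point lies in a level-$n$ cylinder ($n\le m$) is read off from its $F_m$-coordinate, since $[f]_n$ is a union of level-$m$ cylinders.

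\emph{Step 2: the equidistribution condition.} By Step 1, $\mu(g_0[f^0]_n\cap\dots\cap g_r[f^r]_n)$ is close to $\prod|F_n|^{-1}$ whenever, for $h'$ uniform in $H_m$, the vector
\[
\bigl(s_m(h')s_m(b_ih')^{-1}\bigr)_{i}\in F_m^{\,r+1}
\]
is close to uniform in total variation. Here $b_i$ is the $H_m$-component of $g_i$ (normalize $g_0$), and indices with equal $b_i$ are grouped together. Only distinct $b_i$ matter: equal $b_i$ give identical cocycles, and their $F_m$-components then differ by $a_i^{-1}a_j$, which escapes to infinity inside $F_m$. That sub-case is handled inductively at lower levels by the same argument.

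A uniformly random $s_m:H_m\to F_m$ makes each such vector, for a fixed tuple of distinct $b_i$, a sum of $|H_m|$ nearly independent samples from $F_m^{r+1}$. Hoeffding/Chernoff then gives deviation at most $\delta_m$ in total variation with failure probability $\exp(-c\,\delta_m^2|H_m|/|F_m|^{r+1})$. A union bound over at most $|H_m|^{r_m}$ tuples with $r\le r_m$ succeeds once $k_{m+1}-k_m$ is large enough, since $|H_m|$ grows exponentially in $k_{m+1}-k_m$ while $|F_m|$ is fixed. So choose $k_{m+1}$, and then $s_m$, with $r_m\to\infty$ and $\delta_m\to0$.

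\emph{Step 3: the main obstacle.} The delicate point is the interaction between levels. Differences $g_i^{-1}g_j$ can go to infinity at different rates, so some pairs separate at level $m$ while others separate only at lower levels $m'<m$. I expect this to be the hardest part, and I would handle it by induction on the number of distinct separation levels. Condition on the coarser clusters, using the equidistribution at level $m$, which makes the clusters independent up to $\delta_m$. Within each cluster, apply the inductive hypothesis at the lower level, where the $s_{m'}$ were also chosen with the equidistribution property.

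The errors add up to $\sum\delta_{m'}$ over levels that tend to infinity along the sequence, hence to $o(1)$. This yields $\lim\mu(g_0A_0\cap\dots\cap g_rA_r)=\prod\mu(A_i)$ for cylinders, and therefore for all Borel sets.
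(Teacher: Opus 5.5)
The paper contains no proof of this statement. It quotes it from Danilenko's work and uses it as a black box in Section 7, so your sketch has to stand on its own. It does: choosing $s_m$ at random with $|H_m|\gg|F_m|$ and decomposing at the top level is a sound route, and the core estimate can even be made exact. Normalize $g_0=e$ and write $g_i=(a_i,b_i)\in F_m\oplus H_m$. Group the indices into classes $I_0\ni 0,I_1,\dots,I_q$ of equal $b_i$, where $q\ge 1$ by minimality of $m$, and let $\beta_j$ be the common value on $I_j$. Put $E_j=\bigcap_{i\in I_j}a_iB_i\subset F_m$, with $B_i$ the level-$m$ set corresponding to $A_i$, and $\tau_j(h)=s_m(h)\,s_m(\beta_j^{-1}h)^{-1}$. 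Since $x\in g_iA_i$ iff $g_i^{-1}x\in A_i$, one gets
\[
\mu\Bigl(\bigcap_{i=0}^{r} g_iA_i\Bigr)=\frac{1}{|H_m|\,|F_m|}\sum_{h\in H_m}\#\bigl\{f\in E_0:\ f\,\tau_j(h)\in E_j \text{ for } 1\le j\le q\bigr\}.
\]
Averaging over a uniform $(\tau_1,\dots,\tau_q)$ gives exactly $\prod_{j=0}^q|E_j|/|F_m|=\prod_{j=0}^q\mu\bigl(\bigcap_{i\in I_j}g_iA_i\bigr)$. So $\delta$-closeness to uniform in total variation yields an error of at most $\delta$.

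Three points need fixing.

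(i) Abelianness is not used anywhere. Your cocycle formula holds because $F_m$ and $H_m$ are commuting direct factors. The averaging identity only needs that $f\tau_j$ is uniform when $\tau_j$ is. As written ("because $G$ is abelian") your proof would cover only the abelian case, whereas the statement is for arbitrary finite $G_i$. The inverse $\beta_j^{-1}$ in the cocycle is harmless, since you require equidistribution for all tuples.

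(ii) The samples indexed by $h$ are not independent, so plain Hoeffding does not apply. Each value $s_m(h)$ enters at most $q+1$ samples, so McDiarmid's bounded-differences inequality works. Alternatively, split $H_m$ into at most $(q+1)^2$ subfamilies on which the sets $\{h,\beta_1^{-1}h,\dots,\beta_q^{-1}h\}$ are pairwise disjoint. Each single sample is exactly uniform on $F_m^q$ because these $q+1$ points are distinct; this is the one place where distinctness and nontriviality of the $\beta_j$ are used. Per-atom accuracy $\delta|F_m|^{-q}$ then fails with probability at most $2\exp\bigl(-2\delta^2|H_m|/((q+1)^2|F_m|^{2q})\bigr)$. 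The union bound over $|F_m|^q$ atoms and $|H_m|^q$ tuples still tends to $0$ as $|H_m|\to\infty$.

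(iii) Step 3 is not really an obstacle. Each class has at most $r$ indices, and within a class the differences $g_i^{-1}g_{i'}=a_i^{-1}a_{i'}$ still tend to infinity. So induction on the number of sets closes the argument directly, with no bookkeeping over separation levels. Use lower-order mixing in its uniform form: there is an $L$ such that if all differences avoid $F_L$, the error is below $\eps$. This also handles the partition into classes changing along the sequence.
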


\begin{thm}\cite[Theorem 0.4]{Danilenko_2006}\label{CFMSJ}
    If the group $G$ is abelian, any $(C, F)$-system which is mixing of all orders has minimal self-joinings of all orders.
\end{thm}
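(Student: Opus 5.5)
The plan is to follow King's proof of minimal self-joinings for mixing rank-one $\Z$-actions \cite{King1988}, replacing the Rokhlin towers by the $(C,F)$-towers $F_n\times[c]$. Here $[c]$ is the cylinder of $X_n$ determined by fixing the tail $(d_{n+1},d_{n+2},\ldots)$, and the ``levels'' are indexed by $F_n$. Abelianity of $G$ enters in two places. First, every off-diagonal measure $\prescript{N}{}{\Delta}^{\mathbf g}$ is then a genuine $G$-invariant self-joining, since $Z(G)=G$. Second, the tower structure is compatible with left and right translations alike, so the bookkeeping on $F_n$ is symmetric.

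\textbf{Step 1: a weak closure statement for $2$-fold joinings.} Let $\lambda\in\Joi_2(G,\mu)$ be ergodic. For each $n$, the product measure on $X_n$ makes the base $Y_n:=\{1_G\}\times C_{n+1}\times C_{n+2}\times\cdots$ a set with $F_n$-translates that partition $X$ and have measure $|F_n|^{-1}$ each. I will restrict $\lambda$ to $Y_n\times X$ and disintegrate it according to which level $gY_n$, $g\in F_n$, the second coordinate occupies. This should show that $\lambda$ is approximated, up to an error tending to $0$ with $n$, by convex combinations $\sum_{g\in F_n}c^{(n)}_g\,\prescript{2}{}{\Delta}^{(e,g)}$. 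The point is that on a sub-tower the joining is determined by the relative level shift, and the towers generate the $\sigma$-algebra. This is the $(C,F)$-analogue of King's weak closure theorem. To prove it I would first establish a ``copying'' lemma: the joining measure of a product of level sets is approximately invariant under passing from the $n$-tower to the subcopies in the $(n+1)$-tower indexed by $C_{n+1}$. Then I would pass to a weak$^*$ limit.

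\textbf{Step 2: identifying the $2$-fold joinings.} Let $\tau^{(n)}$ be the probability distribution on $F_n$ given by the weights $c^{(n)}_g$. Pass to a subsequence along which the part of $\tau^{(n)}$ sitting on bounded sets of $G$ converges to a measure $\tau$ on $G$. Mixing gives $\prescript{2}{}{\Delta}^{(e,g_n)}\to\mu\times\mu$ whenever $g_n\to\infty$. Hence
\[
\lambda=\sum_{g\in G}\tau(g)\,\prescript{2}{}{\Delta}^{(e,g)}+\bigl(1-\tau(G)\bigr)\,\mu\times\mu .
\]
Since $\lambda$ is ergodic, it equals either $\mu\times\mu$ or a single $\prescript{2}{}{\Delta}^{(e,g)}$. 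This proves $2$-fold MSJ.

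\textbf{Step 3: higher orders.} This is the step I expect to be the main obstacle. I will induct on $N$. Let $\lambda\in\Joi_N(G,\mu)$ be ergodic. If some $2$-dimensional marginal is off-diagonal, that coordinate is a function of another one, and I reduce to $N-1$ coordinates; the reduced joining is ergodic and trivial by induction, hence so is $\lambda$. Otherwise every $2$-marginal is $\mu\times\mu$, so $\lambda$ is pairwise independent, and I must show $\lambda=\mu^{\times N}$. Here I would rerun the tower argument of Step 1 in $N$ coordinates, approximating $\lambda$ by averages of $\prescript{N}{}{\Delta}^{\mathbf g}$ over $\mathbf g\in F_n^{\times N}$ weighted by the joint level distribution. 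Pairwise independence should force the pairwise differences $g_i^{-1}g_j$ to escape every bounded set with weight tending to $1$, and mixing of all orders then sends those $\prescript{N}{}{\Delta}^{\mathbf g}$ to $\mu^{\times N}$. The hard part is making the multi-coordinate approximation rigorous. If this proves too hard, the fallback is a Ryzhikov-type argument: for rank-one systems, pairwise independent joinings are products once the system is mixing.
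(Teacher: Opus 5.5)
The paper does not prove this statement; it is quoted from \cite[Theorem 0.4]{Danilenko_2006}, so there is no in-paper argument to compare with. Judged on its own, your outline is a correct, self-contained proof, and the steps you treat as delicate are exact identities rather than approximations.

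Write $L_f=[\{f\}]_n^\infty$ for $f\in F_n$ (so $L_{1_G}=Y_n$), and let $\mathcal A_n$ be the finite algebra generated by the $L_f$. Since $F_n$ acts on $X_n$ by left multiplication in the first coordinate, and the $L_f$ tile $X$ with no spacers, invariance alone makes $\lambda(L_f\times L_{f'})$ a function of $f^{-1}f'$. For abelian $G$, $f'=ft$ iff $f'=tf$, and this gives
\[
\lambda(A\times A')=\sum_{t\in F_n}c^{(n)}_t\,\mu(A\cap t^{-1}A'),\qquad c^{(n)}_t=|F_n|\,\lambda(Y_n\times tY_n),
\]
for all $A,A'\in\mathcal A_k$ and all $n\ge k$, with no error term. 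So no copying lemma is needed, and your Step 2 then goes through using only mixing.

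In $N$ coordinates the same computation gives, exactly,
\[
\lambda(A_1\times\cdots\times A_N)=\sum_{\mathbf t}c^{(n)}_{\mathbf t}\,\mu(A_1\cap t_2^{-1}A_2\cap\cdots\cap t_N^{-1}A_N).
\]
Here $c^{(n)}$ is the image of the level distribution $p(\mathbf f)=\lambda(L_{f_1}\times\cdots\times L_{f_N})$ under $\mathbf f\mapsto(t_2,\dots,t_N)$, with $t_i=f_1^{-1}f_i$ and $t_1=1_G$.

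The point you left at ``should force'' is then one line. Since $t_i^{-1}t_j=f_i^{-1}f_j$, the $c^{(n)}$-weight of $\{t_i^{-1}t_j=s\}$ is $\sum_{f\in F_n}\lambda_{ij}(L_f\times L_{fs})$. This equals $1/|F_n|$ when the $(i,j)$-marginal $\lambda_{ij}$ is $\mu\times\mu$. Hence for each finite $S\subset G$, the weight of $\{\mathbf t: t_i^{-1}t_j\in S\text{ for some }i\neq j\}$ is at most $\binom{N}{2}|S|/|F_n|\to 0$. Mixing of all orders in its uniform form (as used in Lemma \ref{lemma:countoffd}) then yields $\lambda=\mu^{\times N}$.

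Together with your reduction along an off-diagonal $2$-marginal, this completes the induction. The Ryzhikov-type fallback is therefore unnecessary; it is a statement about rank-one $\Z$-actions and would not transfer as is. What your route buys over the citation is a short, complete proof that isolates exactly where commutativity enters: invariance of the off-diagonal measures, and matching the difference $f^{-1}f'$ with the translate $tf$. It also shows that $2$-fold MSJ needs only mixing, with mixing of all orders used only to dispose of pairwise independent joinings.
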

\subsection{IC-rigidity of $(C, F)$-systems}

In this section we require the groups $G_i$ to be abelian. 

To apply Theorem \ref{mainD} to a $(C, F)$-system $(X, G, \mu)$, we are only missing the lack of weakly asymptotic pairs. 
\begin{prop}\label{CFasy}
Let $(X, G, \mu)$ be a mixing $(C, F)$-system, and take points $x, x'\in X$ lying in different orbits of $G$. Then the point $(x, x')\in X\times X$ is generic for the product measure $\mu^{\times 2}$. 
\end{prop}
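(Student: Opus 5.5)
Throughout, write points of $X=X_n$ as $x=(f_n(x),d_{n+1}(x),d_{n+2}(x),\ldots)$ with $f_n(x)\in F_n$ and $d_m(x)\in C_m$. The plan is to compute the empirical measures of $(x,x')$ along the Følner sequence $(F_n)$ directly in these coordinates, and to use mixing only through its consequence for cylinders of the same level.

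\textbf{Step 1 (rewriting the averages).} Fix cylinders $U=[a]$, $V=[b]$ of level $m$, meaning sets of points with prescribed $F_m$-coordinate, where $a,b\in F_m$. For $n\ge m$, the coordinate $f_n(x)$ decomposes uniquely as $f_n(x)=f_m\,c_{m+1}\cdots c_n$ with $f_m\in F_m$ and $c_j\in C_j$. Since $g\in F_n$ acts by left multiplication on the $F_n$-coordinate, the map $g\mapsto gf_n(x)$ is a bijection of $F_n$, and the pair $\bigl(gf_n(x),gf_n(x')\bigr)$ runs over $\{(u,\,u\,\delta_n):u\in F_n\}$, where
\[
\delta_n:=f_n(x)^{-1}f_n(x')\in F_n .
\]
Hence
\[
\frac{1}{|F_n|}\#\{g\in F_n: gx\in U,\ gx'\in V\}=\frac{1}{|F_n|}\#\{u\in F_n: u\in \pi^{-1}(a),\ u\delta_n\in\pi^{-1}(b)\},
\]
where $\pi:F_n\to F_m$ records the $F_m$-coordinate. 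The right-hand side is exactly $\mu\bigl([a]\cap \delta_n^{-1}[b]\bigr)$ computed in $X_n$, at least when the tails $(d_{n+1},d_{n+2},\ldots)$ of $x$ and $x'$ coincide. In general one must pass to a larger level $n'$ at which the tails agree, or treat the tails separately.

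\textbf{Step 2 (using different orbits).} The tails of $x$ and $x'$ agree from some level on exactly when $x'\in Gx$. So if $x,x'$ lie in different orbits, then $d_j(x)\ne d_j(x')$ for infinitely many $j$. At such a level $j$ the relative position $\delta_j$ acquires a nontrivial $H_{j-1}$-component. I then expect that one can show $\delta_n\to\infty$ in $G$, more precisely that the projection of $\delta_n$ leaves every $F_m$. Equivalently, the effective translation between the two copies of $F_n$ escapes every finite set. This is what feeds the mixing hypothesis.

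\textbf{Step 3 (applying mixing).} Once the average in Step 1 is identified with $\mu([a]\cap \delta_n^{-1}[b])$ and $\delta_n\to\infty$, mixing gives convergence to $\mu([a])\mu([b])$. Since products of cylinders generate the topology of $X\times X$ and are clopen, this yields genericity for $\mu^{\times2}$ along $(F_n)$.

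\textbf{Main obstacle.} Steps 1 and 2 interact when the tails differ. There the two points are not translates inside a single $X_n$, and $\delta_n$ as defined need not relate the points via the group action at all. I would handle this by splitting $F_n=F_{n-1}\cdot c_n(H_{n-1})$, or more generally by decomposing the average over a level $n$ into averages over cosets of $F_{j}$ for a level $j$ where the tails differ. Each coset average is then expressed through a translation of size outside $F_{j-1}$, so that the mixing estimate applies to each piece uniformly. The details of this coset bookkeeping, and controlling the error uniformly over cosets, are where I expect the real work to lie.
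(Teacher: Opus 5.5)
\textbf{Verdict: genuine gap.} Your skeleton matches the paper's proof: substitute $u=gf_n(x)$, show the relative position $\delta_n$ escapes to infinity, then apply mixing. As written, though, the argument does not close, because you misdiagnose where the difficulty lies and leave Step 2 unproved.

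\textbf{Step 1: the restriction on tails is unnecessary, and it is fatal as stated.} Tails that agree from level $n$ on force $x'=(f_n(x)\delta_n,d_{n+1}(x),\ldots)=\delta_n x$. So the only case in which you assert the identification is exactly the case excluded by the hypothesis. The case that actually occurs is deferred to a coset decomposition that you never carry out.

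In fact your computation already holds for arbitrary tails. For $g\in F_n$, the point $gx$ is $(gf_n(x),d_{n+1}(x),\ldots)$ in $X_n$. Whether it lies in a level-$m$ cylinder $[a]$, with $m\le n$, depends only on $\pi(gf_n(x))$, and likewise for $x'$. Hence the count equals $|S_n|$, where $S_n=\pi^{-1}(a)\cap\pi^{-1}(b)\delta_n^{-1}\subset F_n$, and this set involves no tails at all. Two facts finish the identification:
\begin{itemize}
\item $\mu$ of the level-$n$ cylinder over any $S\subset F_n$ is $|S|/|F_n|$;
\item since $G$ is abelian, the level-$n$ cylinder over $S_n$ is $[a]\cap\delta_n^{-1}[b]$.
\end{itemize}
So the average equals $\mu([a]\cap\delta_n^{-1}[b])$ unconditionally. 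The element $\delta_n$ never needs to relate the two points via the action; it only relates their $F_n$-coordinates. The coset bookkeeping in your last paragraph can be dropped.

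\textbf{Step 2 is only stated as an expectation.} Your observation that $\delta_j\notin F_{j-1}$ at a level $j$ where the tails differ is not enough by itself. You need $\delta_n\notin F_N$ for \emph{all} large $n$, not just along the levels where the tails differ. The missing ingredients are:
\begin{itemize}
\item the recursion $\delta_{n+1}=\delta_n\,d_{n+1}(x)^{-1}d_{n+1}(x')$, which follows from $f_{n+1}=f_nd_{n+1}$ and commutativity;
\item the fact that distinct $d\neq d'$ in $C_{n+1}=c_{n+1}(H_n)$ have distinct $H_n$-components, so $d^{-1}d'\notin F_n$.
\end{itemize}
Since $\delta_n\in F_n$, this gives the paper's dichotomy: either $d_{n+1}(x)=d_{n+1}(x')$ and $\delta_{n+1}=\delta_n$, or $\delta_{n+1}\notin F_n$.

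Now fix $N$ and pick $j>N$ with $d_j(x)\neq d_j(x')$; such $j$ exists because the orbits differ. Induction on $n\ge j$ gives $\delta_n\notin F_N$. At levels where the tails differ, $\delta_n\notin F_{n-1}\supseteq F_N$. At the other levels, $\delta_n=\delta_{n-1}$. With these two points filled in, your Step 3 applies verbatim.
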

In particular, by Lemma \ref{lemma:WPgen} the system has no weakly asymptotic pairs.

\begin{proof}
Consider the sequence of averages
\begin{equation}\label{CFaver}
\frac{1}{|F_n|}\sum\limits_{g\in F_n}\delta_{(gx, gx').}
\end{equation}
We prove that it converges to the product measure $\mu^{\times 2}$, which ends the proof. 

For a set $B\subset F_k$, let $[B]_{k}^{\infty}\subset X_k$ denote the set of points whose first coordinate lies in $B$. Notice that for $n\geqslant k$ each element of $F_n$ can be represented uniquely as $f_k\cdot d_{k+1}\cdot \ldots\cdot d_{n}$ for $f_k\in F_k$ and $d_i\in D_i$. Let $[B]_{k}^n\subset F_n$ be the set of elements, for which in this representation we have $f_k\in B$. Let us state a few properties of such cylinder operations; we omit the proofs, which are straightforward algebraic manipulations.

\begin{claim}
For $B\subset F_k$ and any $n\geqslant k$, 
   \[      
   \mu\left([B]_{k}^{\infty}\right)=\frac{|B|}{|F_k|}=\frac{|[B]_{k}^n|}{|F_n|}.
   \]
Additionally, for any $B'\subset F_k$, and any $g\in F_k$ we have
    \begin{gather*}
        [[B]_{k}^n]_{n}^{\infty}=[B]_{k}^{\infty};\\
        [B\cap B']_{k}^{\infty}=[B]_{k}^{\infty}\cap[B']_{k}^{\infty}\qquad\text{and}\qquad [B\cap B']_{k}^{n}=[B]_{k}^{n}\cap[B']_{k}^{n};\\
        [gB]_{k}^{\infty}=g[B]_{k}^{\infty}\qquad\text{and} \qquad[gB]_{k}^n=g[B]_k^n.
    \end{gather*}
\end{claim}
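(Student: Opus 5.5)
The claim is pure bookkeeping with the unique decomposition $F_n = F_k\cdot C_{k+1}\cdots C_n$ and the identification of $X$ with $X_k$ and $X_n$. I will prove the items in the order listed.

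\emph{Unique decomposition.} First I record why the decomposition $f=f_k d_{k+1}\cdots d_n$ of $f\in F_n$ is unique. By induction on $n$, it suffices that $F_{n}=F_{n-1}\cdot C_n$ is a bijective product. Write $f=(a,h)$ with $a\in F_{n-1}$ and $h\in H_{n-1}$. The factor $d_n=c_n(h)=(s_{n-1}(h),h)$ is forced by $h$, and then $f_{n-1}=a\,s_{n-1}(h)^{-1}$ is forced. Counting gives $|F_n|=|F_k|\prod_{i=k+1}^n|C_i|$.

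\emph{Measure identity.} The measure on $X_k$ is the product of normalized counting measures. In particular its first marginal is uniform on $F_k$, so $\mu([B]_k^\infty)=|B|/|F_k|$. By uniqueness, $[B]_k^n\cong B\times C_{k+1}\times\cdots\times C_n$, so $|[B]_k^n|=|B|\prod|C_i|$, and dividing by $|F_n|$ gives the same ratio.

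\emph{Composition identity.} Under the identification $i_{n-1}\circ\cdots\circ i_k\colon X_k\to X_n$, a point $(f_k,d_{k+1},\dots)$ is sent to $(f_kd_{k+1}\cdots d_n,d_{n+1},\dots)$. Its $F_n$-coordinate lies in $[B]_k^n$ exactly when, by uniqueness, $f_k\in B$. Hence $[[B]_k^n]_n^\infty=[B]_k^\infty$.

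\emph{Intersections.} These follow because membership in $[B]_k^\infty$ or $[B]_k^n$ depends only on the single coordinate $f_k$, which is uniquely determined.

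\emph{Translations.} For $g\in F_k$, the action gives $g\cdot(f_k,d_{k+1},\dots)=(gf_k,d_{k+1},\dots)$, so the $F_k$-coordinate is simply left-multiplied. This yields $[gB]_k^\infty=g[B]_k^\infty$. Similarly, $g f_kd_{k+1}\cdots d_n$ is the unique decomposition of $gf$ with first factor $gf_k$, which yields $[gB]_k^n=g[B]_k^n$. Here I need the action of $F_k$ on $X_n$ to be conjugate to its action on $X_k$, which is exactly the compatibility (\ref{conj}).

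\emph{Expected difficulty.} Only the uniqueness of the decomposition requires care, and it reduces to the semidirect-sum structure $F_n=F_{n-1}\oplus H_{n-1}$. Everything else is immediate.
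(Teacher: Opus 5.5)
Your proof is correct and is the routine verification the paper has in mind; the paper omits it as ``straightforward algebraic manipulations''. Everything rests on the unique factorization $F_n=F_k\cdot C_{k+1}\cdots C_n$, which you correctly derive from $F_n=F_{n-1}\oplus H_{n-1}$ and the fact that $c_n(h)=(s_{n-1}(h),h)$ is determined by $h$. The only blemish is wording: that splitting is a direct sum, not a ``semidirect'' one.
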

The family of all cylinder sets $[B]_k^{\infty}$ generates the $\sigma$-algebra of Borel sets on $X$. Therefore, to prove that averages (\ref{CFaver}) converge to the product measure it is enough to show that for any $B, B'\subset F_k$ we have
\[
\lim_{n\to\infty}\frac{1}{|F_n|}\#\{g\in F_n: (gx, gx')\in [B]_k^{\infty}\times [B']_{k}^{\infty}\}=\mu([B]_k^{\infty})\cdot \mu( [B']_k^{\infty}).
\]
Denote
\[
x=(f_n, d_{n+1}, \ldots)\qquad\text{and}\qquad x
=(f_n', d_{n+1}', \ldots ).
\]
Then, denoting $t_n=f_n^{-1}f_n'$, we have
\begin{align*}
\#\{g\in F_n: (gx, gx')\in [B]_k^{\infty}\times [B']_k^{\infty}\}&=\#\{g\in F_n: (gf_n, gf_n')\in [B]_k^n\times [B']_k^n\}\\&=\#\{h\in F_n: (h, ht_{n})\in [B]_k^n\times [B']_k^n\}
\end{align*}
using the substitution $h=gf_n$. But this is simply the size of the set $[B]_k^n\cap t_n^{-1}[B']_k^n\subset F_n$. Therefore, by the properties of the cylinder operation, we have
\[
\frac{1}{|F_n|}\#\{g\in F_n: (gx, gx')\in [B]_k^{\infty}\times [B']_k^{\infty}\}=\mu\Big(\big[[B]_k^n\cap t_n^{-1}[B']_k^n\big]_{n}^{\infty}\Big)=\mu\Big([B]_{k}^{\infty}\cap t_n^{-1}[B']_k^{\infty}\Big).
\]
We now use the fact that points $x, x'$ lie on different orbits to prove that the sequence $(t_n)$ escapes every element of the Følner sequence $(F_n)$. Assuming this, by mixing we obtain
\[
\lim_{n\to\infty}\mu([B]_{k}^{\infty}\cap t_n^{-1}[B']_k^{\infty})=\mu([B]_k^{\infty})\cdot \mu([B']_k^{\infty}),
\]
ending the proof.

\begin{claim}
    Either $t_{n+1}\notin F_n$, or $t_{n+1}=t_n$ and $d_{n+1}=d_{n+1}'$.
\end{claim}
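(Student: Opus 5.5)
We compare the representations of $x$ in $X_n$ and in $X_{n+1}$ through the identification $i_n$, and use only the direct-sum structure $F_{n+1}=F_n\oplus H_n$. Since $i_n(f_n,d_{n+1},d_{n+2},\ldots)=(f_nd_{n+1},d_{n+2},\ldots)$, we have $f_{n+1}=f_nd_{n+1}$ and likewise $f'_{n+1}=f'_nd'_{n+1}$. The groups are abelian, so
\[
t_{n+1}=f_{n+1}^{-1}f'_{n+1}=d_{n+1}^{-1}f_n^{-1}f'_nd'_{n+1}=t_n\,d_{n+1}^{-1}d'_{n+1}.
\]

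Next, write $d_{n+1}=c_{n+1}(h)=(s_n(h),h)$ and $d'_{n+1}=c_{n+1}(h')=(s_n(h'),h')$ with $h,h'\in H_n$. Here $F_{n+1}=F_n\oplus H_n$, with $F_n$ embedded as the first summand. The $H_n$-component of $t_{n+1}$ is then $h^{-1}h'$, because $t_n\in F_n$ contributes nothing to it. Hence $t_{n+1}\in F_n$ holds exactly when $h=h'$. Since $c_{n+1}$ is a function of $h$, this is equivalent to $d_{n+1}=d'_{n+1}$.

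The two cases of the claim now follow. If $t_{n+1}\in F_n$, then $d_{n+1}=d'_{n+1}$, and the displayed identity gives $t_{n+1}=t_n$. Otherwise $t_{n+1}\notin F_n$. The only point needing care is that $c_{n+1}$ is injective, since its second coordinate is $h$. So equality of the $H_n$-components forces equality of the whole $C_{n+1}$-coordinates. Everything else is routine bookkeeping with the identification $\varphi_n$.
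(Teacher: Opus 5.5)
Your proof is correct and follows the paper's argument. Both write $t_{n+1}=t_n\,d_{n+1}^{-1}d'_{n+1}$ and use that an element of $C_{n+1}$ is determined by its $H_n$-component; you just make explicit the $H_n$-component computation that the paper compresses into ``by the definition of this function.'' (Minor point: what you actually need is that $h=h'$ forces $d_{n+1}=d'_{n+1}$, i.e.\ that $C_{n+1}$ is the graph of $s_n$. That is well-definedness of $c_{n+1}$, not its injectivity, though both hold here.)
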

\begin{proof}
The point $x$ is represented as $(f_nd_{n+1}, d_{n+2}, \ldots )\in X_{n+1}$, so that
\[
t_{n+1}=(f_nd_{n+1})^{-1}\cdot(f_n'd_{n+1}').
\]
Recall that $d:=d_{n+1}$ and $d':=d_{n+1}'$ are values of the function $c_{n+1}$. By the definition of this function, we either have $d=d'$ or $d^{-1}d'\notin F_n$. Since $t_{n+1}=t_n\cdot d^{-1}d'$ and $t_n\in F_n$, this implies our claim.
\end{proof}
Now one can readily show by induction that for any $N$, either $t_n\notin F_N$ for large enough $n$, or $d_n=d_n'$ for $n> N$. In the latter case, there exists $m\geq 1$ such that $t_n=t_m\in F_m$ for $n\geq m$, which implies $x'=t_m x$, so they are in a single orbit of the action of $G$. Hence, $(t_n)$ must eventually escape every set $F_N$, which is precisely what we required.
\end{proof}

\begin{thm}
    For abelian $G$, any $(C, F)$-action $(X, G, \mu)$ which is mixing of all orders is also IC-rigid.
\end{thm}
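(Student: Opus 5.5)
The plan is to verify the hypotheses of Theorem \ref{mainD} for the $(C,F)$-system $(X,G,\mu)$ and then invoke that theorem. First, $G=\bigoplus_i G_i$ is a countable abelian group, since each $G_i$ is finite and abelian. This places us in the setting of Section \ref{sec:WP2}.

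Minimality and unique ergodicity hold for every $(C,F)$-system by Theorem \ref{CFuni}. Mixing of all orders is part of the assumption. Because $G$ is abelian, Theorem \ref{CFMSJ} upgrades mixing of all orders to minimal self-joinings of all orders. The only remaining hypothesis of Theorem \ref{mainD} is the absence of weakly asymptotic pairs.

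For that step I would use Proposition \ref{CFasy} together with Lemma \ref{lemma:WPgen}, checking the equivalence carefully since Lemma \ref{lemma:WPgen} has its own hypotheses. It requires unique ergodicity and $2$-fold MSJ, which we have. It also requires that every pair $(x,y)\in X\times X$ be generic for an ergodic measure of $\{g\times g\}$. This splits into two cases.

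\begin{itemize}
\item If $x,y$ lie in different orbits, Proposition \ref{CFasy} shows $(x,y)$ is generic for $\mu^{\times 2}$. This measure is ergodic because mixing implies weak mixing.
\item If $y=hx$, then since $G$ is abelian, $(x,hx)$ is the image of $x$ under the equivariant map $z\mapsto(z,hz)$. Unique ergodicity gives that $x$ is generic for $\mu$ along $(F_n)$, so $(x,hx)$ is generic for $\prescript{2}{}{\Delta}^{(e,h)}$, which is ergodic. Every element of $G$ lies in $Z(G)=G$.
\end{itemize}

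It remains to verify condition ii) of Lemma \ref{lemma:WPgen}: if $(x,y)$ is generic for $\prescript{2}{}{\Delta}^{(e,g)}$, then $y=gx$. If $x,y$ were in different orbits, $(x,y)$ would be generic for $\mu^{\times2}$, and $\mu^{\times 2}\neq\prescript{2}{}{\Delta}^{(e,g)}$ because $\mu$ is non-atomic. So $y=hx$ for some $h$, and $(x,y)$ is generic for $\prescript{2}{}{\Delta}^{(e,h)}$. Since generic measures are unique, $\prescript{2}{}{\Delta}^{(e,h)}=\prescript{2}{}{\Delta}^{(e,g)}$. This means $hz=gz$ for $\mu$-a.e.\ $z$, hence for all $z$ by continuity and full support, and so $y=hx=gx$. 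Lemma \ref{lemma:WPgen} then gives that there are no weakly asymptotic pairs.

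With all hypotheses verified, Theorem \ref{mainD} yields IC-rigidity. I expect the main care point to be the last step: identifying $\prescript{2}{}{\Delta}^{(e,h)}=\prescript{2}{}{\Delta}^{(e,g)}$ with equality $hx=gx$ of the points. Here I use that $\mu$ is non-atomic, which holds since the system is infinite and mixing, and that $\mu$ has full support, which follows from minimality and unique ergodicity. Alternatively one can avoid this by noting that the action is free on a full-measure set.
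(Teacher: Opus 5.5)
Your proposal is correct and follows the paper's proof exactly. You verify the hypotheses of Theorem \ref{mainD} using Theorem \ref{CFuni}, the mixing assumption, and Theorem \ref{CFMSJ}, and then obtain the absence of weakly asymptotic pairs from Proposition \ref{CFasy} combined with Lemma \ref{lemma:WPgen}. The paper leaves implicit your checks that same-orbit pairs are generic for ergodic off-diagonal joinings, which is needed for Lemma \ref{lemma:WPgen} to apply, and your verification of condition ii) via non-atomicity and full support; both are sound.
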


\begin{proof}
    We verify all conditions of Theorem \ref{mainD}. Theorem $\ref{CFuni}$ gives minimality and unique ergodicity, mixing of all orders is assumed, and Theorem \ref{CFMSJ} gives minimal self-joinings of all orders. Finally, Lemma \ref{lemma:WPgen} together with Proposition \ref{CFasy} shows that there are no weakly asymptotic pairs. 
\end{proof}

By Theorem \ref{CFmixing}, we obtain Theorem \ref{mainF} as an immediate corollary. 

Finally, we remark that, by Corollary \ref{cor:topmsj}, such systems have topological minimal self-joinings of all orders.

\printbibliography
    
\end{document}